\renewcommand\MR[1]{\relax} 
\newtheorem{thm}{Theorem}[section] 
\numberwithin{equation}{section}
\newtheorem{cor}[thm]{Corollary}
\newtheorem{lemma}[thm]{Lemma}
\newtheorem{prop}[thm]{Proposition}
\theoremstyle{definition}
\newtheorem{definition}[thm]{Definition}
\theoremstyle{remark}
\newtheorem{remark}[thm]{Remark}
\newtheorem{example}[thm]{Example}
\def\mathcs{C^{*}}
\newcommand{\cs}{\ensuremath{\mathcs}}
\DeclareMathSymbol{\rtimes}{\mathbin}{AMSb}{"6F}
\newcommand{\ib}{im\-prim\-i\-tiv\-ity bi\-mod\-u\-le}
\newcommand{\sme}{\,\mathord{\mathop{\text{--}}\nolimits_{\relax}}\,}
\def\ibind#1{\mathop{#1\mathord{\mathop{\text{--}}}}\!\Ind\nolimits}
\newcommand\xind{\ibind\X}
\newcommand\R{\mathbf{R}}
\newcommand\C{\mathbf{C}}
\newcommand\set[1]{\{\,#1\,\}}
\newcommand\sset[1]{\{#1\}}
\let\tensor=\otimes
\def\restr#1{|_{{#1}}}
\def\labelenumi{\textnormal{(\@alph\c@enumi)}}
\def\theenumi{\@alph \c@enumi}
\def\labelenumii{\textnormal{(\@roman\c@enumii)}}
\def\theenumii{\@roman \c@enumii}
\def\alphapart#1{\charno=96
\advance\charno by#1\char\charno}
\def\<{\langle}
\def\>{\rangle}
\let\ipscriptstyle=\scriptscriptstyle
\def\lipsqueeze{{\mskip -3.0mu}}
\def\ripsqueeze{{\mskip -3.0mu}}
\def\ipcomma{\nobreak\mathrel{,}\nobreak}
\newbox\ipstrutbox
\def\ipstrut{\copy\ipstrutbox}
\def\lip#1<#2,#3>{\mathopen{\relax_{\ipstrut\ipscriptstyle{
#1}}\lipsqueeze
\langle} #2\ipcomma #3 \rangle}
\def\blip#1<#2,#3>{\mathopen{\relax_{\ipstrut
\ipscriptstyle{ #1}}\lipsqueeze\bigl\langle} #2\ipcomma #3 \bigr\rangle}
\def\rip#1<#2,#3>{\langle #2\ipcomma #3
\rangle_{\ripsqueeze\ipstrut\ipscriptstyle{#1}}}
\def\brip#1<#2,#3>{\bigl\langle #2\ipcomma #3
\bigr\rangle_{\ripsqueeze\ipstrut\ipscriptstyle{#1}}}
\def\angsqueeze{\mskip -6mu}
\def\smangsqueeze{\mskip -3.7mu}
\def\trip#1<#2,#3>{\langle\smangsqueeze\langle #2\ipcomma #3
\rangle\smangsqueeze\rangle_{\ripsqueeze\ipstrut\ipscriptstyle{#1}}}
\def\btrip#1<#2,#3>{\bigl\langle\angsqueeze\bigl\langle #2\ipcomma
#3
\bigr\rangle
\angsqueeze\bigr\rangle_{\ripsqueeze\ipstrut\ipscriptstyle{#1}}}
\def\tlip#1<#2,#3>{\mathopen{\relax_{\ipstrut\ipscriptstyle{
#1}}\lipsqueeze \langle\smangsqueeze\langle} #2\ipcomma #3
\rangle\smangsqueeze\rangle}
\def\btlip#1<#2,#3>{\mathopen{\relax_{\ipstrut\ipscriptstyle{
#1}}\lipsqueeze
\bigl\langle\angsqueeze\bigl\langle} #2\ipcomma #3
\bigr\rangle\angsqueeze\bigr\rangle}
\def\ip(#1|#2){(#1\mid #2)}
\def\bip(#1|#2){\bigl(#1 \mid #2\bigr)}
\def\Bip(#1|#2){\Bigl( #1 \bigm| #2 \Bigr)}
\newcommand\X{\mathsf{X}}
\newcommand\Y{\mathsf{Y}}
\newcommand\Span{\operatorname{span}}
\newcommand\ospan{\overline{\Span}}
\newcommand\I{\mathcal{I}}
\newcommand\Ind{\operatorname{Ind}}
\newcommand\grg{\mathcal{G}}
\newcommand\Prim{\operatorname{Prim}}
\newcommand\B{\mathscr{B}}
\newcommand\E{\mathscr{E}}
\newcommand\CC{\mathscr{C}}
\newcommand\JJ{\mathscr{J}}
\newcommand\KK{\mathscr{K}}
\newcommand\A{\mathscr{A}}
\newcommand\pb{p_{\B}}
\newcommand\pc{p_{\CC}}
\newcommand\qe{q_{\E}}
\newcommand\M{\mathscr{M}}
\newcommand\JJM{\JJ_{\M}}
\newcommand\KKM{\KK^{\M}}
\newcommand\qk{q^{\KK}}
\newcommand\qj{q^{\JJ}}
\newcommand\go{G^{(0)}}
\newcommand\ho{H^{(0)}}
\newcommand\ko{K^{(0)}}
\newcommand\Ex{\operatorname{Ex}}
\newcommand\Lip{\tlip\scriptstyle *}
\newcommand\Rip{\trip\scriptstyle *}
\begin{document}
\begin{abstract}
  We establish a generalized Rieffel correspondence for ideals
  in equivalent Fell bundles.
\end{abstract}

\title{The Rieffel Correspondence for Equivalent Fell Bundles}

\author[Kaliszewski]{S. Kaliszewski}
\address{School of Mathematical and Statistical Sciences
\\Arizona State University
\\Tempe, Arizona 85287}
\email{kaliszewski@asu.edu}

\author[Quigg]{John Quigg}
\address{School of Mathematical and Statistical Sciences
\\Arizona State University
\\Tempe, Arizona 85287}
\email{quigg@asu.edu}

\author[Williams]{Dana P. Williams}
\address{Department of Mathematics\\ Dartmouth College \\ Hanover, NH
  03755-3551 USA}
\email{dana.williams@Dartmouth.edu}

 \date{September 18, 2023}

\subjclass[2000]{Primary  46L55}
\keywords{groupoid, Fell bundle, ideal, Rieffel correspondence, Morita equivalence}

\maketitle

\section{Introduction}
\label{sec:introduction}

Morita equivalence is a fundamental tool in the
study of \cs-algebras.  For example Morita equivalent \cs-algebras
$A$ and $B$ share much of their fine structure and have equivalent
representation theories.  Many such properties
are elucidated as the ``Rieffel Correspondence'' induced by an
$A\sme B$-\ib\ $\X$.  A summary of these properties is given in
Theorem~\ref{thm-rief-corr} below, but the key feature is that the
Rieffel Correspondence gives a natural lattice isomorphism between the
ideal lattices of the two \cs-algebras.   In
the case of \cs-algebras associated to dynamical systems of various
sorts, perhaps the fundamental tool used to generate useful Morita
equivalences is the notion of Fell-bundle equivalence.  In this
article, we show that there is an analogous Rieffel correspondence
induced by an equivalence $q\colon \E\to T$ between two Fell bundles
$\pb\colon \B\to H$ and $\pc\colon \CC\to K$ over locally compact
groupoids $H$ and $K$.  Rather than work at the level of the Fell-bundle
\cs-algebras $\cs(H; \B)$ and $\cs(K;\CC)$, we work with the
Fell bundles themselves.  We introduce a natural notion of an ideal
$\JJ$ of a Fell bundle $\B$.  In the case where $\B$ is the Fell
bundle corresponding to a group or groupoid $G$ acting on a
\cs-algebra $A$, these Fell-bundle ideals naturally correspond to
$G$-invariant ideals of $A$ in the standard sense.  More generally, our
ideals are the same as the Fell subbundles studied in
\cite{ionwil:hjm11}.  We can form the quotient Fell bundles $\B/\JJ$,
and if $H$ has a Haar system and if our Fell bundles are separable,
then the main result in \cite{ionwil:hjm11} pushes the analogy of Fell
bundle ideals with invariant ideals in crossed products; that is,
we have a short
exact sequence of \cs-algebras
\begin{equation}
  \label{eq:15}
  \begin{tikzcd}
    0\arrow[r]&\cs(H;\JJ) \arrow[r]
    &\cs(H;\B)\arrow[r]&\cs(H;\B/\JJ)\arrow[r]
    &0.
  \end{tikzcd}
\end{equation}
However, as we do not work with \cs-algebras, we do not require our
groupoids to have Haar systems.

In this article, our main result is that if $\E$ is an equivalence
between $\B$ and $\CC$ as above, then there is a lattice isomorphism
between the ideals of $\CC$ and those of $\B$.  Furthermore, if $\KK$
and $\JJ$ are corresponding ideals of $\B$ and $\CC$, respectively,
then $\KK$ and $\JJ$ are equivalent Fell bundles as are the quotients
$\B/\KK$ and $\CC/\JJ$.  Naturally, these equivalences arise from
submodules and quotients of the given equivalence $\E$.

We start in Section~\ref{sec:preliminaries} with a detailed collection
of preliminary material which summarizes and conveniently
collects in one place the
basics of Banach bundles, Fell bundles, and Fell-bundle equivalence.
We also introduce our notion of ideals of Fell bundles and develop some
of their basic properties.    In Section~\ref{sec:rieff-corr-fell}, we
establish our basic Rieffel Correspondence as
Theorem~\ref{thm-rieffel-corr-fell}.   Then in
Section~\ref{sec:extend-rieff-corr} we establish the equivalence
between corresponding ideals and their quotients.

Since we know that if two separable Fell
bundles are equivalent, and if both of the underlying groupoids have
Haar systems, then their corresponding Fell-bundle
\cs-algebras are Morita equivalent, the classical Rieffel
correspondence gives an isomorphism between the ideal lattices of the
two Fell-bundle \cs-algebras.   In 
Section~\ref{sec:at-cs-level}, we confirm the
natural conjecture that if
two ideals correspond under our Rieffel correspondence for Fell-bundle
ideals, then the corresponding ideals in the Fell-bundle \cs-algebras
also correspond under the classical Rieffel correspondence.

\subsection*{Conventions}
\label{sec:conventions}

We use the standard conventions in the subject.  In particular,
homomorphisms between \cs-algebras are assumed to be $*$-preserving
and ideals in \cs-algebras are two-sided and norm closed.
Locally compact is meant to mean locally compact and Hausdorff, and
our groupoids are always meant to be locally compact and Hausdorff.
Suppose that $A$ is an algebra and $\X$ is a (left) $A$-module.  If
$S\subset A$ and $\Y\subset \X$, then by convention,
$S\cdot \Y=\Span\set{a\cdot x:\text{$a\in A$ and $x\in \Y$}}$.
Similarly, if $\rip<\cdot, \cdot>$ is an $A$-valued sesquilinear form
on $\X$, then
$\rip<\Y_{1},\Y_{2}>=\Span\set{\rip<x,y>:\text{$x\in \Y_{1}$ and
    $y\in \Y_{2}$}}$.  If $A$ is a \cs-algebra and the $A$-module $\X$
is a Banach space, then we call $\X$ a Banach $A$-module if
$\|a\cdot x\|\le \|a\|\|x\|$ for all $a\in A$ and $x\in \X$.  Further,
we say that $\X$ is nondegenerate if $A\cdot \X$ is dense in~$\X$.

\section{Preliminaries}
\label{sec:preliminaries}

\subsection{The Rieffel Correspondence}
\label{sec:rieff-corr}

If $A$ is a \cs-algebra, then we let $\I(A)$ denote the lattice of
ideals in $A$.
Suppose that $\X$ is an $A\sme B$-\ib, and let $\mathcal C(\X)$ be the
lattice of closed $A\sme B$-submodules of $\X$.  Then the Rieffel
Correspondence asserts that there are
natural lattice isomorphisms among 
$\I(A)$, $\mathcal C(\X)$, and $\I(B)$.  Specifically, we have the
following summary 
from \cite{rw:morita}*{\S3.3}.

\begin{thm}[Rieffel Correspondence]  \label{thm-rief-corr}
  Suppose that $A$ and $B$ are \cs-algebras and that
  $\X$ is an $A\sme B$-\ib.
  \begin{enumerate}
  \item Suppose that $\Y$ is a closed $A\sme B$-submodule of $\X$.  Then
  \begin{equation}
    \label{eq:3}
    K=\overline{\lip A<\Y,\X>}=\overline{\lip A<\X,\Y>} =\overline{\lip
      A<\Y,\Y>} 
  \end{equation}
  is an ideal in $A$, while
  \begin{equation}
    \label{eq:4}
    J=\overline{\rip B<\Y,\X>}=\overline{\rip B<\X,\Y>} =\overline{\rip
      B<\Y,\Y>} 
  \end{equation}
is an ideal in $B$.   We have
\begin{equation}
  \label{eq:5}
 K\cdot \X= \overline{K\cdot \X}=\Y=\overline{\X\cdot J}= \X\cdot J.
\end{equation}
\item
In particular, $J\mapsto \X\cdot J$ is a lattice isomorphism of
$\I(B)$ onto $\mathcal C(\X)$ with inverse $\Y\mapsto \overline{\rip
  B<\Y,\Y>}$, and $K\mapsto K\cdot \X$ is a lattice isomorphism of
$\I(A)$ onto $\mathcal C(\X)$ with inverse $\Y \mapsto \overline{\lip
  A<\Y,\Y>}$.
\item If $K$, $\Y$, and $J$ are as in part~(a), then $\Y$ is a $K\sme
  J$-\ib\ with respect to the restricted actions and inner products.
\item If $K$, $\Y$, and $J$ are as in part~(a), then the quotient
  Banach space $\X/\Y$ is an $A/K\sme B/J$-\ib.   In particular, the
  quotient norm on $\X/\Y$ equals the imprimitivity-bimodule norm.
  \end{enumerate}
\end{thm}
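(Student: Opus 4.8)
The plan is to prove (a)--(d) in turn, using repeatedly the two structural features of an \ib: the two inner products are mutually adjoint, $\lip A<x,y>^{*}=\lip A<y,x>$ and $\rip B<x,y>^{*}=\rip B<y,x>$, and they are linked by the compatibility identity $\lip A<x,y>\cdot z=x\cdot\rip B<y,z>$, together with fullness, $\overline{\lip A<\X,\X>}=A$ and $\overline{\rip B<\X,\X>}=B$.

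For part~(a) I would first verify that $K=\overline{\lip A<\Y,\X>}$ is an ideal of $A$. It is closed by definition, and since $\Y$ is a submodule the computations $a\cdot\lip A<y,x>=\lip A<a\cdot y,x>$ and $\lip A<y,x>\cdot a=\lip A<y,a^{*}\cdot x>$ (the latter via self-adjointness) give $A\cdot K\subseteq K$ and $K\cdot A\subseteq K$. Self-adjointness of $K$ yields $\overline{\lip A<\Y,\X>}=\overline{\lip A<\X,\Y>}$. To collapse all three expressions, the key identity is $\lip A<y,x>\cdot\lip A<x',y'>=\lip A<{y\cdot\rip B<x,x'>},y'>$, which follows from compatibility; its right-hand side lies in $\lip A<\Y,\Y>$ because $\Y$ is a right submodule, so $K\cdot K\subseteq\overline{\lip A<\Y,\Y>}$, and since $\overline{K\cdot K}=K$ for an ideal we conclude $K=\overline{\lip A<\Y,\Y>}$. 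For~\eqref{eq:5}, compatibility gives $\lip A<y,x>\cdot x'=y\cdot\rip B<x,x'>\in\Y$, whence $K\cdot\X\subseteq\Y$; conversely an approximate identity $(e_{\lambda})$ for $K$ satisfies $e_{\lambda}\cdot y\to y$ for every $y\in\Y$ by the usual Hilbert-module estimate, so $\Y\subseteq\overline{K\cdot\X}$; and the Cohen factorization theorem shows $K\cdot\X$ is already closed. Hence $K\cdot\X=\overline{K\cdot\X}=\Y$, and the $B$-side is symmetric.

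For part~(b), part~(a) already shows that $J\mapsto\X\cdot J$ and $\Y\mapsto\overline{\rip B<\Y,\Y>}$ are mutually inverse: one composite is exactly~\eqref{eq:5}, and for the other I would verify $\overline{\rip B<\X\cdot J,\X\cdot J>}=J$ using $\rip B<x\cdot j,x'\cdot j'>=j^{*}\rip B<x,x'>j'$ (which places the left side inside $J$) together with fullness and $\overline{J\cdot B\cdot J}=J$ for the reverse inclusion. Both assignments are visibly inclusion-preserving, so a bijection that is order-preserving in both directions is a lattice isomorphism; the $A$-side statement is symmetric. Part~(c) is then a verification that the restricted data make $\Y$ an \ib: the inner products restrict to full $K$- and $J$-valued forms by part~(a), the actions restrict because $K\cdot\Y\subseteq\Y$ and $\Y\cdot J\subseteq\Y$, and positivity and compatibility are inherited from $\X$.

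For part~(d) I would define the left $A/K$- and right $B/J$-actions and the two inner products on $\X/\Y$ by passing to cosets; these are well defined precisely because $K\cdot\X\subseteq\Y$, $A\cdot\Y\subseteq\Y$, and $\lip A<\Y,\X>\subseteq K$, together with their right-hand analogues, all of which come from~\eqref{eq:3}--\eqref{eq:5}. The module axioms and fullness transfer routinely. The one delicate point, and the step I expect to be the main obstacle, is that the quotient Banach-space norm on $\X/\Y$ agrees with the bimodule norm, for which $\|x+\Y\|^{2}=\|\lip A<x,x>+K\|_{A/K}$. One inequality is immediate: for $y\in\Y$ the element $\lip A<x-y,x-y>$ differs from $\lip A<x,x>$ by a member of $K$, so the quotient norm dominates the bimodule norm. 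For the reverse I would take an approximate identity $(e_{\lambda})$ of $K$ and set $y_{\lambda}=e_{\lambda}\cdot x\in K\cdot\X=\Y$; writing $a=\lip A<x,x>$ and $q\colon A\to A/K$ for the quotient map, the estimate $\|x-e_{\lambda}\cdot x\|^{2}=\|(1-e_{\lambda})a(1-e_{\lambda})\|=\|a^{1/2}(1-e_{\lambda})\|^{2}\to\|q(a)\|$, via the standard fact that $\lim_{\lambda}\|c(1-e_{\lambda})\|=\|q(c)\|$, shows the quotient norm is in turn dominated by the bimodule norm. The two estimates give equality and finish the proof.
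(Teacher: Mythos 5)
Your proof is correct, but it takes a genuinely different route from the paper, because the paper does not prove Theorem~\ref{thm-rief-corr} from first principles at all: its proof is a list of citations to \cite{rw:morita}*{\S3.3} --- Lemma~3.23 for the equalities \eqref{eq:3} and \eqref{eq:4}, Theorem~3.22 for the lattice isomorphisms, Proposition~3.24 together with Cohen factorization (Remark~\ref{rem-cohen}) for \eqref{eq:5}, and Proposition~3.25 for parts (c) and (d). What you have written is, in substance, a self-contained reconstruction of those cited results, and your key steps are all sound: the identity $\lip A<y,x>\lip A<x',y'>=\lip A<{y\cdot\rip B<x,x'>},y'>$ collapses the three spans in \eqref{eq:3}; the approximate-identity estimate gives $\Y\subseteq\overline{K\cdot\X}$, with Cohen factorization removing the closure; a bijection that is order-preserving in both directions is a lattice isomorphism; and the computation $\lim_{\lambda}\|a^{1/2}(1-e_{\lambda})\|^{2}=\|a+K\|_{A/K}$ for $a=\lip A<x,x>$ is exactly the standard argument for the norm equality in (d). Two points you leave implicit deserve a sentence each: in part (b), before part (a) can be applied to $\X\cdot J$ you must observe that it lies in $\mathcal C(\X)$ (it is an $A\sme B$-submodule because $J$ is an ideal, and it is closed by Cohen factorization); and in part (d) the norm equality is not merely a final flourish --- it is what shows that $\X/\Y$, being complete in the quotient norm, is complete in the bimodule norm, hence a genuine \ib\ rather than just a pre-imprimitivity bimodule. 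As for what each approach buys: the paper's citation-only proof is the economical choice for material that is pure background, while your version has the advantage of displaying the two techniques --- Cohen factorization and approximate-identity estimates --- that the paper goes on to reuse repeatedly, for instance in Lemma~\ref{lem-ib-cohen}, Proposition~\ref{prop-weak-ideal}, and Proposition~\ref{prop-equiv-quotient}.
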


\begin{remark}
  \label{rem-cohen} Suppose that $J$ is an ideal in a \cs-algebra
  $B$, and that $\X$ is a right
  Hilbert $B$-module.  Then $\Y=\overline{\X\cdot J}$ is a
  nondegenerate Banach $J$-module.   Therefore the Cohen factoriztion
  \cite{rw:morita}*{Proposition~2.33} implies that every element of
  $\Y$ is of the form $x\cdot b$ with $x\in \Y$ and $b\in J$,
  so
  \begin{equation}
    \Y=\X\cdot J=\set{x\cdot b:\text{$x\in \X$ and $b\in J$}}.
  \end{equation}
As a result, we have $\overline{\X\cdot J}=\X\cdot J$ part~(a), and
similarly with $K\cdot \X$. 
\end{remark}

\begin{proof}[Proof of Theorem~\ref{thm-rief-corr}]
  This is just a reworking of the basic results in
  \cite{rw:morita}*{\S3.3}.  The equalities in \eqref{eq:3} and
  \eqref{eq:4} follow from \cite{rw:morita}*{Lemma~3.23}.  The lattice
  isomorphisms follow from \cite{rw:morita}*{Theorem~3.22} while
  \eqref{eq:5}
  follows from \cite{rw:morita}*{Proposition~3.24} together with
  Remark~\ref{rem-cohen}.  The statements about \ib s follow from
  \cite{rw:morita}*{Proposition~3.25}. 
\end{proof}

\subsection{Banach Bundles}
\label{sec:banach-bundles}

Roughly speaking, a Banach bundle is a topological bundle in which
each fibre is a Banach space.   More precisely, we have the following
definition.

\begin{definition} \label{def-ban-bun} A \emph{Banach
  bundle} over a 
  topological space $X$ is a topological space $\B$ together with a
  continuous, \emph{open} surjection $p\colon \B\to X$ and complex Banach
  space structures on each fibre $B_{x}=p^{-1}(\sset x)$ satisfying
  the following axioms.
  \begin{enumerate}[(B1)]
  \item The map $b\mapsto \|b\|$ is upper semicontinuous from $\B$ to
    $\R^{+}$.\footnote{This means that for all $\epsilon>0$,
      $\set{b\in \B:\|b\|<\epsilon}$ is open.}
  \item The map $(a,b)\mapsto a+b$ is continuous from
    $\B^{(2)}=\set{(a,b)\in \B\times\B:p(a)=p(b)}$ to $\B$.
  \item The map $(\lambda,b)\mapsto \lambda b$ is continuous
    from $\C\times \B$ to $\B$.
  \item If $(b_{i})$ is a net in $\B$ such that $p(b_{i})\to x$ and
    $\|b_{i}\|\to 0$, then $b_{i}\to 0_{x}$ in $\B$ (where $0_{x}$ is
    the zero element in $B_{x}$.
  \end{enumerate}
  We say that $p:\B\to X$ is \emph{separable} if $X$ is second
  countable and the Banach space $\Gamma_{0}(X;\B)$ is separable.
  If the map in (B1) is actually continuous, we we call $\B$ a
  \emph{continuous} Banach bundle.
\end{definition}

\begin{remark}
  In some treatments axiom~(B3) in Definition~\ref{def-ban-bun} is
  replaced by the formally weaker axiom 
  that $b\mapsto \lambda b$ is continuous for each $\lambda\in\C$.
  However since $\set{b\in\B:\|b\|<\epsilon}$ is open, the proof of
  \cite{fd:representations1}*{Proposition~II.13.10} shows the two
  definitions are equivalent.
\end{remark}

\begin{remark}[The Literature]
  Continuous Banach bundles are treated in detail in \S\S13--14 in
  \cite{fd:representations1}*{Chap.~II} and many of the results there
  apply \emph{mutatis mutandis} to Banach bundles. In the
  past, Banach bundles as defined above were called ``upper
  semicontinuous Banach bundles''.  We have adopted the convention to
  drop the modifier in the general case.
  Banach bundles are
  discussed briefly in \cite{muhwil:dm08}*{Appendix~A}---which is
  where Definition~\ref{def-ban-bun} comes from---and the
  corresponding notion of a \cs-bundle is treated in detail in
  \cite{wil:crossed}*{Appendix~C}.
\end{remark}

The topology on the total space $\B$ of a Banach bundle might not be well
behaved.  For example, it need not be Hausdorff
\cite{wil:crossed}*{Example~C.27}.  But we do have the following.

\begin{lemma}
  \label{lem-rel-top} If $p\colon \B\to X$ is a Banach
  bundle, then the relative topology on $B_{x}$ is the \textup(Banach space\textup)
  norm topology. 
\end{lemma}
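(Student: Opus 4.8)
The plan is to show that the identity map on $B_{x}$ is a homeomorphism between the norm topology and the relative topology inherited from $\B$. Since both are topologies on the same set, it suffices to verify that a net $(b_{i})$ in $B_{x}$ converges to a point $b\in B_{x}$ in one topology if and only if it converges to $b$ in the other; continuity of the identity in each direction, and hence equality of the two topologies, then follows from the net characterization of continuity, which is valid in arbitrary (not necessarily Hausdorff) topological spaces. The key device I would use is to reduce every convergence statement to convergence to the zero vector $0_{x}$ by translating: given $(b_{i})$ and $b$ in $B_{x}$, I set $c_{i}=b_{i}-b\in B_{x}$ using the vector space structure of the fibre, noting that $p(c_{i})=x$ for all $i$.

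For the first direction I would suppose $b_{i}\to b$ in norm, so that $\|c_{i}\|\to 0$ while $p(c_{i})\equiv x\to x$; axiom~(B4) then gives $c_{i}\to 0_{x}$ in $\B$. Since $(c_{i},b)\to(0_{x},b)$ in $\B^{(2)}$ and addition is continuous by~(B2), it follows that $b_{i}=c_{i}+b\to 0_{x}+b=b$ in $\B$, i.e.\ in the relative topology, so the relative topology is coarser than the norm topology. For the converse I would suppose $b_{i}\to b$ in $\B$. Scalar multiplication by $-1$ is continuous by~(B3), so $-b$ is a fixed element of $B_{x}$ and the pairs $(b_{i},-b)$ form a net in $\B^{(2)}$ converging to $(b,-b)$; by~(B2) we get $c_{i}=b_{i}+(-b)\to 0_{x}$ in $\B$. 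Upper semicontinuity of the norm~(B1) at $0_{x}$ then forces $\limsup_{i}\|c_{i}\|\le\|0_{x}\|=0$, whence $\|b_{i}-b\|=\|c_{i}\|\to 0$, that is $b_{i}\to b$ in norm. This shows the norm topology is coarser than the relative topology, and combining the two inclusions yields the claim.

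The argument is essentially routine, and I do not expect a serious obstacle. The only points requiring care are that one must argue with nets rather than sequences, since $\B$ need not be first countable or even Hausdorff, and that the translation to $0_{x}$ is exactly what lets the single-variable axioms~(B4) and~(B1)-at-$0_{x}$ be brought to bear through the two-variable continuity of addition in~(B2). I would keep both directions symmetric by always reducing to the behaviour of the translated net $(c_{i})$ at the zero vector.
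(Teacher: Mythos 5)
Your proof is correct, and it is worth noting that it is genuinely more self-contained than what the paper does: the paper's ``proof'' of Lemma~\ref{lem-rel-top} is purely a citation (Proposition~II.13.11 of Fell--Doran for continuous bundles, with the remark that the argument carries over to the upper semicontinuous case). What you have written out is essentially that standard argument, reconstructed directly from axioms (B1)--(B4): translate by $-b$ to reduce everything to convergence at $0_{x}$; use (B4) plus continuity of addition (B2) to pass from norm convergence to convergence in $\B$; and use (B2) plus openness of $\set{a\in\B:\|a\|<\epsilon}$ (which is exactly the content of (B1)) to go back. All the delicate points are handled correctly: the net characterization of continuity requires no Hausdorffness; the pairs $(c_{i},b)$ and $(b_{i},-b)$ do lie in $\B^{(2)}$ since every entry sits over the single point $x$; and convergence in $\B$ of a net lying in $B_{x}$ to a point of $B_{x}$ is precisely convergence in the relative topology, so the two directions really do yield equality of the topologies. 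One cosmetic remark: you do not need (B3) to produce $-b$ --- it exists by the vector space structure of the fibre, and you only use it as a constant net --- so that invocation is superfluous, though harmless. Your version buys a self-contained proof at negligible cost, since the argument is only a few lines; the paper's version buys brevity by deferring to the literature.
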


\begin{proof}
  In the continuous case, this is
  \cite{fd:representations1}*{Proposition~II.13.11}, and proof carries
  over to the general case---see \cite{dwz:jmaa22}*{Lemma~2.2}.
\end{proof}

If $p:\B\to X$ is a Banach bundle, we write $\Gamma(X;\B)$ for the
vector space of continuous sections.  If $X$ is locally
compact, then
we write $\Gamma_{c}(X;\B)$ and $\Gamma_{0}(X;\B)$ for the continuous
sections which have compact support or which vanish at infinity, respectively.
We say that $p:\B\to X$ \emph{has enough sections} if given
$b\in B_{x}$ there is  an $f\in \Gamma(X;\B)$ such that $f(x)=b$.  Note
that if $X$ is locally compact, then since $\Gamma(X;\B)$ is a
$C(X)$-module by (B3), we can take $f\in \Gamma_{c}(X;\B)$.

\begin{thm}[\cite{laz:jmaa18}*{Corollary~2.10}]
  \label{thm-enough-sections} If $p:\B\to X$ is a Banach bundle over a
  locally compact space, then $\B$ has enough sections.
\end{thm}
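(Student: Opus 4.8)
The plan is to reduce the statement to an approximate version and then upgrade by a successive-approximation argument, following the method of Douady--dal Soglio-H\'erault reworked in the cited reference. Fix $x_0\in X$ and $b\in B_{x_0}$; we must produce a continuous section through $b$. First I would isolate the genuinely hard input as the following approximation property: $(\ast)$ for every $c\in B_{x_0}$ and every $\delta>0$ there is a continuous section $\tau\in\Gamma(X;\B)$ with $\|\tau(x_0)-c\|<\delta$. Note that the zero section is continuous by (B4), so there is at least something to start from, but $(\ast)$ is far stronger.

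Granting $(\ast)$, I would obtain an exact section by telescoping. Choose $\tau_n$ with $\|\tau_n(x_0)-r_{n-1}\|<2^{-n}$, where $r_{n-1}=b-\sum_{k<n}\tau_k(x_0)\in B_{x_0}$, so that $\|r_n\|<2^{-n}\to 0$. After each choice, use upper semicontinuity of the norm (axiom (B1)) to find an open $W_n\ni x_0$ on which $\|\tau_n\|$ is correspondingly small, choose a cutoff $\phi_n\in C_c(X)$ with $0\le\phi_n\le 1$, $\phi_n(x_0)=1$, and $\operatorname{supp}\phi_n\subset W_n$ lying in a fixed compact neighborhood of $x_0$, and replace $\tau_n$ by $\sigma_n=\phi_n\tau_n$, which is again a continuous section by (B3). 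Then $\sigma_n(x_0)=\tau_n(x_0)$, the sup-norms $\|\sigma_n\|_\infty$ are summable, so $f=\sum_n\sigma_n$ converges uniformly to a compactly supported section with $f(x_0)=\sum_n\tau_n(x_0)=\lim_N(b-r_N)=b$. This uses two standard facts: that a uniform limit of continuous sections is a continuous section (which follows from (B1), (B2) and (B4)), and the $C_c(X)$-module structure on sections.

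Everything above is routine bookkeeping; the crux is $(\ast)$, the existence of \emph{approximate} continuous sections through a prescribed point. I would reduce $(\ast)$ to a purely local statement---for each $c\in B_{x_0}$ and $\delta>0$, a continuous section defined on some neighborhood $U$ of $x_0$ whose value at $x_0$ is within $\delta$ of $c$ and whose norm is controlled on $U$---and then patch such local approximate sections over a locally finite cover by a partition of unity subordinate to it. This is where local compactness of $X$, hence paracompactness, is used, together with (B3) for the cutoffs.

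The main obstacle is precisely the construction of these local approximate sections from the raw data. The topology on $\B$ is \emph{given}, not defined through sections, and a continuous open surjection need not admit continuous local sections in general, so one must exploit the Banach-space structure on the fibres together with axioms (B1) and (B4) to extract a continuous selection near $x_0$ passing approximately through $c$. This local construction is the substantive content of the Douady--dal Soglio-H\'erault theorem and of Corollary~2.10 in the cited reference, and it is essentially the step where local compactness of the base is indispensable; the reduction and the telescoping upgrade I have described are exactly what make that single local construction suffice.
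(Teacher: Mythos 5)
The paper gives no proof of this theorem at all: it is imported verbatim from Lazar's paper (Corollary~2.10 of the cited reference), so there is no argument of the paper's to compare yours against step by step. Your outline is the standard Douady--dal Soglio-H\'erault strategy, and the parts you actually carry out are correct: the telescoping upgrade from the approximation property $(\ast)$ to an exact section (norm control via (B1), Urysohn cutoffs via (B3), uniform limits of continuous sections, the $C_{c}(X)$-module structure) is sound bookkeeping. Two remarks, though. First, your parenthetical ``local compactness of $X$, hence paracompactness'' is false as stated---a locally compact Hausdorff space need not be paracompact (the long line is the standard counterexample)---but your argument never needs paracompactness: since $(\ast)$ only concerns the value of a section at the single point $x_{0}$, one Urysohn cutoff supported in a compact neighborhood of $x_{0}$ suffices to globalize a local approximate selection, with no locally finite cover or partition of unity required.

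Second, and more importantly, the entire mathematical content of the theorem---the construction of local approximate continuous selections from the raw bundle axioms, which you correctly identify as the crux---is not proved in your proposal but deferred to the very reference the paper cites. So what you have is a correct \emph{reduction} of the statement to Lazar's selection theorem, not an independent proof of it. Since the paper's own treatment is simply the citation, your proposal is in effect the same approach as the paper's, with the (correct) reduction machinery made explicit; it should not be counted as a self-contained argument.
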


While the notion of a Banach bundle is a natural mathematical object,
generally Banach bundles arise in nature from their sections as
described in the following result.

\begin{thm}[Hofmann-Fell] \label{thm-section-top}
  Let $X$ be a locally compact space and suppose that for each $x\in
  X$, we are given a Banach space $B_{x}$.   Let $\B$ be the disjoint
  union $\coprod_{x\in X}B_{x}$ viewed as bundle $p:\B\to X$.
  Suppose that $\Gamma$ is a subspace of sections such that
  \begin{enumerate}
  \item for each $f\in \Gamma$, $x\mapsto \|f(x)\|$ is upper
    semicontinuous, and 
  \item for each $x\in X$, $\set{f(x):f\in\Gamma}$ is dense in $B_{x}$.
  \end{enumerate}
Then there is a unique topology on $\B$ such that $p:\B\to X$ is a
Banach bundle with $\Gamma\subset \Gamma(X;\B)$.  Furthermore, the
sets of the form
\begin{equation}
  \label{eq:1}
  W(f,U,\epsilon)=\set{a\in \B:\text{$p(a)\in U$ and
      $\|a-f(p(a))\|<\epsilon$}} 
\end{equation}
 with $f\in \Gamma$, $U$ open in $X$, and $\epsilon>0$ form a basis
 for this topology.
\end{thm}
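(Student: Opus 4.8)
The plan is to take the proposed family $\set{W(f,U,\epsilon):f\in\Gamma,\ U\ \text{open},\ \epsilon>0}$ as a basis, show it generates a topology making $p\colon\B\to X$ a Banach bundle, check that $\Gamma\subseteq\Gamma(X;\B)$, and finally establish uniqueness. First I would verify the basis axioms, the only nontrivial point being the intersection property. Given $a\in W(f_{1},U_{1},\epsilon_{1})\cap W(f_{2},U_{2},\epsilon_{2})$ with $x=p(a)$, I would use hypothesis~(ii) to choose $g\in\Gamma$ with $\|g(x)-a\|$ small, and then exploit that $g-f_{i}\in\Gamma$ together with hypothesis~(i) (upper semicontinuity of $y\mapsto\|(g-f_{i})(y)\|$) to produce a neighborhood $V$ of $x$ and a $\delta>0$ with $a\in W(g,V,\delta)\subseteq W(f_{1},U_{1},\epsilon_{1})\cap W(f_{2},U_{2},\epsilon_{2})$; the estimate is just the triangle inequality $\|b-f_{i}(p(b))\|\le\|b-g(p(b))\|+\|(g-f_{i})(p(b))\|$ applied on $V$. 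Continuity and openness of $p$ are then immediate, since $p^{-1}(U)$ is a union of basic sets while $p(W(f,U,\epsilon))=U$ because $f(y)\in W(f,U,\epsilon)$ for every $y\in U$.

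Next I would check the axioms (B1)--(B4). For (B1), given $\|b_{0}\|<\epsilon$ I approximate $b_{0}$ by some $g\in\Gamma$ and bound $\|b\|\le\|b-g(p(b))\|+\|g(p(b))\|$ on a suitable $W(g,V,\delta)$, controlling $\|g(\cdot)\|$ near $x$ by hypothesis~(i). Axioms (B2) and (B3) follow the same pattern: approximate the relevant elements by sections in $\Gamma$, observe that $f+g$ and $\lambda g$ again lie in $\Gamma$ since $\Gamma$ is a subspace, and control the error by the triangle inequality and upper semicontinuity. For (B4), if $p(b_{i})\to x$ and $\|b_{i}\|\to 0$, then for any basic neighborhood $W(f,U,\epsilon)$ of $0_{x}$ I bound $\|b_{i}-f(p(b_{i}))\|\le\|b_{i}\|+\|f(p(b_{i}))\|$ and use $\limsup_{i}\|f(p(b_{i}))\|\le\|f(x)\|<\epsilon$ to place $b_{i}$ eventually in $W(f,U,\epsilon)$. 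That $\Gamma\subseteq\Gamma(X;\B)$ is then essentially built in: for $f\in\Gamma$ and a basic neighborhood $W(g,U,\epsilon)$ of $f(x)$, the set $\set{y:\|(f-g)(y)\|<\epsilon}\cap U$ is an open neighborhood of $x$ carried into $W(g,U,\epsilon)$ by $f$, using $f-g\in\Gamma$ and (i).

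The main obstacle is uniqueness, and I would handle it by characterizing convergent nets in a way that refers only to $\Gamma$ and the fibre norms. For any Banach bundle topology $\tau$ on $\B$ with $\Gamma\subseteq\Gamma(X;\B)$, I claim that $a_{i}\to a$ (with $x=p(a)$) if and only if $p(a_{i})\to x$ and for every $\delta>0$ there is $f\in\Gamma$ with $\|f(x)-a\|<\delta$ and $\limsup_{i}\|a_{i}-f(p(a_{i}))\|\le\delta$. The forward direction uses continuity of $f$ and of subtraction together with upper semicontinuity of the norm; the reverse direction uses Theorem~\ref{thm-enough-sections} to choose a $\tau$-continuous section $g$ with $g(x)=a$ (this is where local compactness of $X$ enters), bounds $\|a_{i}-g(p(a_{i}))\|$ through the approximating $f$, and applies (B4) to conclude $a_{i}\to a$. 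Since this criterion is independent of $\tau$, any two Banach bundle topologies with $\Gamma$ among their continuous sections have the same convergent nets and hence coincide, so the constructed topology is the unique one. I expect the genuinely delicate point to be that hypothesis~(ii) provides only fibrewise density of $\Gamma$ rather than a section passing through each point of $\B$, which is precisely why the $\limsup$ formulation together with the appeal to enough sections is needed.
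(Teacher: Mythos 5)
Your proof is correct, but it differs from the paper in a structural sense: the paper gives no direct argument at all. Its proof of Theorem~\ref{thm-section-top} is a citation --- the continuous case is \cite{fd:representations1}*{Theorem~II.13.18}, and the general upper semicontinuous case is stated in \cite{dg:banach}*{Proposition~1.3} and follows \emph{mutatis mutandis} from \cite{wil:crossed}*{Theorem~C.25}. What you have written is, in effect, the content of those cited proofs: take the sets $W(f,U,\epsilon)$ as a basis (your intersection argument via fibrewise density of $\Gamma$ plus upper semicontinuity is exactly the standard step), verify (B1)--(B4) by approximating within the subspace $\Gamma$, and observe $\Gamma\subset\Gamma(X;\B)$. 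Your one genuine departure is the uniqueness argument: the classical route shows that in \emph{any} admissible Banach-bundle topology the sets $W(f,U,\epsilon)$ are open (upper semicontinuous norm composed with continuous maps) and form a neighborhood base at each point, and hence that topology is forced; you instead characterize convergent nets by a criterion that refers only to $\Gamma$ and the fibre norms, invoking Lazar's theorem (Theorem~\ref{thm-enough-sections}) to produce a continuous section through an arbitrary point of $\B$ in the reverse implication. That argument is sound --- the forward direction via (B1)--(B2) and the reverse via (B4) both check out --- and it cleanly handles the delicate point you correctly flag, namely that $\Gamma$ is only fibrewise dense rather than passing through every point of $\B$. The trade-off is that your uniqueness step consumes local compactness of $X$ (needed for Lazar's theorem), which is harmless since it is hypothesized, whereas the neighborhood-base argument does not need enough sections; in exchange, your write-up makes the paper's ``mutatis mutandis'' explicit and self-contained, while the paper buys brevity by deferring to the literature.
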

\begin{proof}
  In the continuous case, this is
  \cite{fd:representations1}*{Theorem~II.13.18}.  In general, it was
  stated in \cite{dg:banach}*{Proposition~1.3} and also follows
  \emph{mutatis mutandis} from \cite{wil:crossed}*{Theorem~C.25}. 
\end{proof}

\subsection{Banach Subbundles}
\label{sec:banach-subbundles}

A subbundle of Banach bundle is a Banach subbundle if it is a Banach
bundle in the inherited structure.

\begin{definition}\label{def-ban-subbun}
  Let $p\colon \B\to X$ be a Banach bundle.  We say that $\CC\subset
  \B$ is a \emph{Banach subbundle} if each $C_{x}=B_{x}\cap C$
  is a closed vector subspace of $B_{x}$, and $p\restr \CC\colon
  \CC\to X$ is a
  Banach bundle when we give $C_{x}$ the Banach-space
  structure
  coming from $B_{x}$ and we give $\CC$ the relative topology.
\end{definition}

\begin{remark}
  \label{rem-zero-fibres}  Since
  $0_{x}\in C_{x}$ for all $x$, we must have $p(\CC)=X$.
  However some fibres can be the zero Banach
  space.   
\end{remark}

\begin{remark}\label{rem-just-open}
  If $\set{C_{x}}$ is any collection of closed subspaces with
  $C_{x}\subset
  B_{x}$,
  and if we give $\CC=\coprod C_{x}=\set{b\in \B:b\in C_{p(b)}}$ the
  relative topology, then $p\colon \CC\to X$ is a continuous surjection
  satisfying (B1), (B2), (B3), and (B4) of 
  Definition~\ref{def-ban-bun}.
  But $p\colon \CC\to X$ may fail to be a Banach subbundle unless we
  also have $p\restr \CC$ open.
\end{remark}

Even if $p\restr \CC$ is not open, we write $\Gamma(X;\CC)$ for the
continuous functions $f$ from $X$ to $\CC$ such that $p(f(x))=x$ for
all $x\in X$.  Of course, if $p\restr \CC$ is not open, there is no
reason that $\Gamma(X;\CC)$ should contain anything other than the zero
section---as shown in the next example.

\begin{example}
  Let $B$ be a Banach space and $\B=X\times B$ the trivial bundle over
  $X$.   Fix $x_{0}\in X$, and let
  \begin{equation}
    \label{eq:8}
    C_{x}=
    \begin{cases}
      B&\text{if $x=x_{0}$, and} \\ 0_{x}&\text{otherwise.}
    \end{cases}
  \end{equation}
Then in general, $p\restr \CC:\CC\to X$ is not open and admits only the
zero section.
\end{example}

\begin{prop}
  \label{prop-fd-sub-bun} Let $p\colon \B\to X$ be a Banach bundle.
  Suppose that $C_{x}$ is a closed subspace of $B_{x}$ for all $x\in X$
  and let $\CC=\coprod C_{x}$ be as above.   If $\set{f(x):f\in
    \Gamma(X;\CC)}$ is dense in $C_{x}$ for all $x\in X$, then the
  bundle $p\restr 
  \CC\colon \CC\to X$ is a Banach subbundle of~$\B$.
\end{prop}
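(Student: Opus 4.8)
The plan is to reduce the statement to the single property that is not already automatic, namely openness of the restricted projection, and then to deduce that openness from the density hypothesis by an upper semicontinuity argument.

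First I would invoke Remark~\ref{rem-just-open}: it tells us that $p\restr\CC\colon\CC\to X$ is a continuous surjection satisfying (B1)--(B4) of Definition~\ref{def-ban-bun}. Since a Banach bundle is by definition a continuous \emph{open} surjection with these four properties, the entire content of the proposition is that $p\restr\CC$ is open.

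To prove openness I would first fix a workable basis on the ambient bundle. Because $\B$ has enough sections (Theorem~\ref{thm-enough-sections}), its topology coincides with the one produced by Theorem~\ref{thm-section-top} from $\Gamma(X;\B)$, so the sets $W(g,U,\epsilon)$ of \eqref{eq:1} with $g\in\Gamma(X;\B)$ form a basis for $\B$. Hence the sets $\CC\cap W(g,U,\epsilon)$ form a basis for the relative topology on $\CC$, and since a continuous map is open once it sends a basis to open sets, it is enough to check that each image $p\bigl(\CC\cap W(g,U,\epsilon)\bigr)$ is open. Unwinding the definitions, this image is exactly $\set{x\in U:\operatorname{dist}(g(x),C_{x})<\epsilon}$, where $\operatorname{dist}(g(x),C_x)=\inf_{c\in C_x}\|g(x)-c\|$, so everything comes down to the upper semicontinuity of $x\mapsto\operatorname{dist}(g(x),C_{x})$.

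This is where the hypothesis does the real work, and it is the step I expect to be the crux. Since $\set{f(x):f\in\Gamma(X;\CC)}$ is dense in $C_{x}$, the fibrewise distance can be rewritten as an infimum over sections, $\operatorname{dist}(g(x),C_{x})=\inf_{f\in\Gamma(X;\CC)}\|g(x)-f(x)\|$. For each fixed $f$ we have $f\in\Gamma(X;\CC)\subset\Gamma(X;\B)$, so $x\mapsto g(x)-f(x)$ is a continuous section by (B2) and (B3), and therefore $x\mapsto\|g(x)-f(x)\|$ is upper semicontinuous by (B1). An infimum of upper semicontinuous functions is upper semicontinuous, so $x\mapsto\operatorname{dist}(g(x),C_x)$ is upper semicontinuous; its sublevel set is open, and intersecting with the open set $U$ leaves it open. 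This yields the openness of $p\restr\CC$, and the proposition follows. The genuine obstacle is precisely the recognition that the density assumption converts the fibrewise distance into an infimum of the manifestly upper semicontinuous functions $\|g(\cdot)-f(\cdot)\|$; the supporting fact that the $W(g,U,\epsilon)$ form a basis for $\B$ is where local compactness of $X$ (through Theorem~\ref{thm-enough-sections}) is used.
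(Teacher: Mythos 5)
Your reduction to the openness of $p\restr\CC$ is exactly right, and the distance-function idea is sound as far as it goes, but as written your proof establishes less than the proposition asserts: it quietly imports the hypothesis that $X$ is locally compact. Proposition~\ref{prop-fd-sub-bun} is stated for an arbitrary Banach bundle in the sense of Definition~\ref{def-ban-bun}, whose base is merely a topological space, whereas both results you lean on---Lazar's Theorem~\ref{thm-enough-sections} (to get enough sections of $\B$) and the Hofmann--Fell Theorem~\ref{thm-section-top} (to know the sets $W(g,U,\epsilon)$ with $g\in \Gamma(X;\B)$ form a basis)---are only available for locally compact $X$. Without them you have no guarantee that $\B$ admits \emph{any} nonzero continuous sections, so there need be no $g$ to which your identity $\operatorname{dist}(g(x),C_{x})=\inf_{f}\|g(x)-f(x)\|$ applies, and the reduction of openness to upper semicontinuity of a distance function collapses. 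Note the asymmetry in the hypotheses: you are \emph{given} a fibrewise-dense supply of sections of the subbundle $\CC$, but you are given nothing at all about sections of the ambient bundle $\B$.

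The paper's proof exploits precisely this asymmetry and therefore works over an arbitrary base. Given a relatively open $V\subset\CC$ and $b\in V$ with $p(b)=x$, density furnishes $f_{n}\in\Gamma(X;\CC)$ with $\|f_{n}(x)-b\|\to 0$; axiom (B4) gives $f_{n}(x)-b\to 0_{x}$ in $\B$, then (B2) gives $f_{n}(x)\to b$ in $\B$, hence in the relative topology of $\CC$, so $f_{N}(x)\in V$ for some $N$. Continuity of $f_{N}$ as a map into $\CC$ then shows that $f_{N}^{-1}(V)$ is an open neighborhood of $x$ contained in $p(V)$ (the paper phrases this last step with nets). In short: one approximates $b$ by values of sections of $\CC$, which the hypothesis supplies, rather than approximating $g(x)$ by arbitrary elements of $C_{x}$ for a section $g$ of $\B$, which nothing supplies. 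Your argument is correct, and is a clean alternative---essentially the Fell--Doran ``lower semicontinuous choice of subspaces'' viewpoint---whenever $X$ is locally compact, which does cover every application made in the paper (Example~\ref{ex-cox-alg}, Proposition~\ref{prop-e-j}, Lemma~\ref{lem-j-bb}); but to prove the proposition as stated you should replace the basis step by the section-approximation step above.
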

\begin{proof}
  Suppose that $\set{f(x):f\in \Gamma(X;\CC)}$ is dense in $C_{x}$ for
  all $x$.
  Let $U$ be a nonempty (relatively) open set in $\CC$.   In view of
  Remark~\ref{rem-just-open}, to show that $p\restr \CC\colon \CC\to X$ is a
  Banach subbundle, it will suffice to see that $p(U)$ is
  open in $X$.   Let $x\in p(U)$ and suppose that $(x_{i})$ is a net
  in $X$ converging to $x$ in $X$.   It will suffice to see that
  $(x_{i})$ is eventually in $p(U)$.  
  Let $b\in U$ be such that $p(b)=x$.   Then for each~$n$, $\set{b'\in
    \B: \|b'-b\|<\frac1n}$ is an  open neighborhood of $b$ in $\B$.
  Hence there is 
  $f_{n}\in \Gamma(X;\CC)$ such that $\|f_{n}(x)-b\|<\frac1n$.  Thus
  $\|f_{n}(x)-b\|\to 0$.   By B4, $f_{n}(x)-b\to 0_{x}$ in $\B$.   But
  by 
  B2, $f_{n}(x) \to b$ in $\B$.  Since everything in sight is in $\CC$
  and $\CC$ has the relative topology, for some $N$, $f_{N}(x)\in U$.
  But $f_{N}(x_{i})\to f_{N}(x)$.   So $f_{N}(x_{i})$ is eventually in
  $U$.   But then $x_{i}$ is eventually in $p(U)$.
\end{proof}

\begin{remark}
  In \cite{fd:representations1}*{Problem 41 in Chap. II}, Fell and
  Doran call a family $\set{C_{x}}$ of subspaces 
  as in Proposition~\ref{prop-fd-sub-bun} in a
  continuous Banach bundle a
  \emph{lower semicontinuous choice of subspaces}.
\end{remark}

\begin{remark}
  If $p\colon \B\to X$ is a Banach bundle over a locally compact space,
  and if $p\restr\CC\colon \CC\to X$ is a Banach subbundle, then it
  has enough sections by Lazar's Theorem~\ref{thm-enough-sections}.
  Hence $\set{f(x):f\in \Gamma(X;\CC)}$ is not only dense---it is all
  of $C_{x}$.
\end{remark}


\subsection{Quotient Banach Bundles}
\label{sec:quot-banach-bundl}
Let $p:\B\to X$ be a Banach bundle over a locally compact space $X$,
and let $\CC\subset \B$ be a Banach subbundle as in
Definition~\ref{def-ban-bun}.  Then we can formally form the quotient
$\B/\CC=\coprod_{x\in X} B_{x}/C_{x}$ where $B_{x}/C_{x}$ is the usual
Banach space quotient.  We let $q:\B\to \B/\CC$ be the quotient map so that
if $b\in B_{x}$, then $q(b)=q_{x}(b)$ where
$q_{x}:B_{x}\to B_{x}/C_{x}$ is the usual Banach space quotient map.
In particular, $q$ is norm reducing.  If $f\in \Gamma_{c}(X;\B)$ then
we will write $q(f)$ for the section of $\B/\CC$ given by
$q(f)(x)=q_{x}(f(x))$. 
\begin{prop}
  \label{prop-ban-bund-quotient}  Let $p:\B\to X$ be a Banach bundle
  and $\CC\subset \B$ a Banach subbundle.   Then $\bar p:\B/\CC \to X$
  is a Banach bundle in the quotient topology.   Furthermore, the
  quotient map $q:\B\to \B/\CC$ is continuous and open, and the
  quotient topology on $\B/\CC$ is the unique topology such that
  $\Gamma= \set{q(f):f\in \Gamma_{c}(X;\B)} \subset
  \Gamma_{c}(X;\B/\CC)$. 
\end{prop}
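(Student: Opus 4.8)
The plan is to build the bundle topology on $\B/\CC$ directly from its sections via the Hofmann--Fell Theorem~\ref{thm-section-top}, and then to check that the topology so produced is exactly the quotient topology. The one structural fact I would record at the outset is that, since $\CC$ is a Banach subbundle over the locally compact space $X$, it has enough sections by Theorem~\ref{thm-enough-sections}, so that $\set{g(x):g\in\Gamma(X;\CC)}=C_{x}$ for every $x$. Consequently the fibre quotient norm is realized as a distance computed along sections: for $b\in B_{x}$,
\[
  \|q_{x}(b)\|=\inf\set{\|b-c\|:c\in C_{x}}=\inf\set{\|b-g(x)\|:g\in\Gamma(X;\CC)}.
\]
This identity is what converts upper semicontinuity statements about $\B$ into upper semicontinuity statements about $\B/\CC$, and it is used repeatedly below.

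First I would apply Theorem~\ref{thm-section-top} to the family $\Gamma=\set{q(f):f\in\Gamma_{c}(X;\B)}$, which is a subspace of sections of $\B/\CC$ since $q$ is fibrewise linear. For hypothesis~(1), the map $x\mapsto\|q(f)(x)\|=\inf_{g}\|f(x)-g(x)\|$ is an infimum of the upper semicontinuous functions $x\mapsto\|(f-g)(x)\|$ (each $f-g$ is a continuous section of $\B$, and the norm on $\B$ is upper semicontinuous by (B1)), hence is upper semicontinuous. For hypothesis~(2), $\B$ has enough sections by Theorem~\ref{thm-enough-sections}, so $\set{q(f)(x):f\in\Gamma_{c}(X;\B)}=q_{x}(B_{x})=B_{x}/C_{x}$, which is certainly dense. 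Theorem~\ref{thm-section-top} then supplies a Banach bundle topology $\tau$ on $\B/\CC$ with $\Gamma\subset\Gamma(X;\B/\CC)$, having the sets $W(q(f),U,\epsilon)$ as a basis, and it is the unique Banach bundle topology with these properties.

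Next I would verify that $q\colon\B\to(\B/\CC,\tau)$ is continuous and open. For continuity it suffices to pull back a basic open set: $q^{-1}(W(q(f),U,\epsilon))=\set{b\in\B:p(b)\in U,\ \operatorname{dist}(b-f(p(b)),C_{p(b)})<\epsilon}$, and the function $b\mapsto\operatorname{dist}(b-f(p(b)),C_{p(b)})=\inf_{g}\|b-(f+g)(p(b))\|$ is an infimum over $g\in\Gamma(X;\CC)$ of the upper semicontinuous maps $b\mapsto\|b-(f+g)(p(b))\|$ on $\B$ (each $f+g$ is a continuous section of $\B$, so $b\mapsto b-(f+g)(p(b))$ is continuous by (B2), and the norm is upper semicontinuous), hence is upper semicontinuous; intersecting with the open set $p^{-1}(U)$ shows the preimage is open. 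For openness the key computation is the fibrewise identity $q(W(f,U,\epsilon))=W(q(f),U,\epsilon)$: the inclusion $\subset$ is immediate because $q_{x}$ is norm reducing, while for $\supset$ one uses that $\|q_{x}(\cdot)\|$ is a distance, so any class $\alpha\in B_{x}/C_{x}$ with $\|\alpha-q_{x}(f(x))\|<\epsilon$ has a representative $b\in B_{x}$ with $\|b-f(x)\|<\epsilon$. Since the $W(f,U,\epsilon)$ form a basis for $\B$, this shows $q$ carries basic open sets to $\tau$-open sets, hence is open.

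Finally I would identify $\tau$ with the quotient topology. Continuity of $q$ shows every $\tau$-open set is quotient-open, while openness of $q$ together with its surjectivity (so that $q(q^{-1}(O))=O$) shows every quotient-open set is $\tau$-open; hence the two topologies coincide. In particular $\bar p\colon\B/\CC\to X$ is a Banach bundle in the quotient topology, $q$ is continuous and open, and $\Gamma\subset\Gamma_{c}(X;\B/\CC)$ (note $q(f)$ is supported in the support of $f$); the uniqueness assertion is then precisely the uniqueness clause of Theorem~\ref{thm-section-top}. I expect the main obstacle to be the openness step---specifically the identity $q(W(f,U,\epsilon))=W(q(f),U,\epsilon)$ and the attendant use of enough sections of $\CC$ to express quotient norms as infima along sections, which is also what powers both upper semicontinuity arguments.
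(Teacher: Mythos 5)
Your proposal is correct and follows essentially the same route as the paper: both construct the bundle topology on $\B/\CC$ by applying the Hofmann--Fell Theorem~\ref{thm-section-top} to the pushed-forward sections $\{q(f):f\in\Gamma_{c}(X;\B)\}$ (using enough sections of $\CC$ to get upper semicontinuity of the fibrewise quotient norms), then prove $q$ is continuous and open --- with the identical quotient-norm infimum computation showing $q(W(f,U,\epsilon))$ is the corresponding basic set --- and finally identify the resulting topology with the quotient topology via continuity plus openness. The only local difference is the continuity step: you show preimages of basic open sets are open via upper semicontinuity of a distance-to-$C_{p(b)}$ function (an infimum of upper semicontinuous maps), while the paper uses a net argument citing an external convergence lemma; both are valid, and yours is slightly more self-contained.
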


\begin{remark}
  As pointed out in \cite{laz:jmaa18},
  Proposition~\ref{prop-ban-bund-quotient} can be sorted out of
  \cite{gie:lnm82}*{Chap.~9}.   We give the short proof for
  completeness. 
\end{remark}

\begin{proof}
  Let $f\in \Gamma_{c}(X;\B)$.   We claim $x\mapsto \|q_{x}(f(x))\|$
  is upper semicontinuous.   Fix $\epsilon>0$.  Suppose
  $\|q_{x}(f(x))\|<\epsilon$.   Then by definition of the quotient
  norm, there is a $c\in C_{x}$ such that $\|f(x)+c\|<\epsilon$.   Let
  $d\in \Gamma_{c}(X;\CC)$ be such that $d(x)=c$.   Then there is a
  neighborhood $V$ of $x$ such that $\|f(y)+d(y)\|<\epsilon$ if $y\in
  V$.   Since $q(f)=q(f+d)$, it follows that $\|q(f)(y)\|<\epsilon$
  for $y\in V$.   This establishes the claim.

  It follows from Theorem~\ref{thm-section-top} that there is a
  unique topology on $\B/\CC$ such that $\B/\CC$ is a Banach bundle
  with $\Gamma:=\set{q(f):f\in \Gamma_{c}(X;\B)} \subset
  \Gamma_{c}(X;\B/\CC)$.

  Next we claim that the quotient map $q:\B\to \B/\CC$ is continuous.
  Suppose that $(a_{i})$ is a net in $\B$ such that $a_{i}\in
  B_{x_{i}}$ and $a_{i}\to a_{0}$ in $\B$.   Then $x_{i}\to x_{0}$ in
  $X$.  Let $f\in\Gamma_{c}(X;\B)$ be such that $f(x_{0})=a_{0}$.   Then
  \begin{equation}
    \label{eq:37}
    \|f(x_{i})-a_{i}\|\to 0.
  \end{equation}
  Since $q$ is norm reducing,
  \begin{equation}
    \label{eq:48}
    \|q(f)(x_{i})-q(a_{i})\|\to 0.
  \end{equation}
  Since $q(f)\in \Gamma_{c}(X;\B/\CC)$, \cite{muhwil:dm08}*{Lemma~A.3}
  implies that $q(a_{i})\to q(a_{0})$.   Thus $q$ is continuous as
  claimed.

  To see that $q$ is also open, Let $V$ be an open neighborhood of
  $b\in \B$.   Then in view of Theorem~\ref{thm-section-top}, there is a
  $f\in \Gamma_{c}(X;\B)$, and open neighborhood $U$ of $p(b)$, and a
  $\epsilon>0$ such that
  \begin{equation}
    \label{eq:49}
    b\in W(f,U,\epsilon):=\set{a\in B:\text{$p(a)\in U$ and
        $\|a-f(p(a))\|<\epsilon$}}. 
  \end{equation}
We need to verify that $q(V)$ is a neighborhood of $q(b)$.  Since
$q(f) \in \Gamma_{c}(X;\B/\CC)$, it will suffice to see that
\begin{equation}
  \label{eq:50}
  q(W(f,U,\epsilon))=\set{q(c):\text{$p(c)\in U$ and
      $\|q(c)-q(f)(p(c))\|<\epsilon$} }.
\end{equation}
Since the left-hand side is clearly a subset of the right-hand side,
it suffices to consider $q(c)$ in the right-hand side.   If $x=p(c)$,
then
\begin{equation}
  \label{eq:51}
  \epsilon>\|q(c)-q(f)(x)\|=\|q_{x}(c-f(x))\|=\inf_{q_{x}(a)=c}\|a-f(x)\|. 
\end{equation}
Hence there is an $a\in W(f,U,\epsilon)$ such that $q(a)=q(c)$.   This
suffices show that $q$ is open.

Since $q$ is continuous and open, the topology on $\B/\CC$  is the
quotient topology.
\end{proof}

\subsection{Fell Bundles}
\label{sec:fell-bundles}

Fell bundles are natural generalizations of Fell's Banach
$*$-algebraic bundles from \cite{fd:representations1}*{Chap.~VIII} and
were introduced by Yamagami in \cite{yam:xx87}.  The following
definition comes from \cite{muhwil:dm08}*{Definition~1.1}.

\begin{definition}
  Suppose that $p:\B\to G$ is
  a Banach bundle over a second countable locally compact Hausdorff
  groupoid $G$.  Let
  \begin{equation}
    \label{eq:6}
    \B^{(2)}=\set{(a,b)\in \B\times \B:\bigl(p(a),p(b)\bigr)\in G^{(2)}}.
  \end{equation}
We say that $p:\B\to G$ is a \emph{Fell bundle} if there is a
continuous, bilinear, associative multiplication map $(a,b)\mapsto ab$
from $\B^{(2)}$ to $\B$ and a continuous involution $b\mapsto b^{*}$
from $\B$ to $\B$ such that
\begin{enumerate}[(FB1)]
\item $p(ab)=p(a)p(b)$,
\item $p(a^{*})=p(a)^{-1}$,
\item $(ab)^{*}=b^{*}a^{*}$,
\item for each $u\in\go$, the fibre $B_{u}$ is a \cs-algebra with
  respect to the inherited multiplication and involution on $B_{u}$,
  and
\item for each $g\in G$, $B_{g}$ is an $B_{r(g)} \sme B_{s(g)}$-\ib\
  when equipped with the inherited actions and inner products given by
  \begin{equation}
    \label{eq:7}
    \lip B_{r(g)} <a,b>=ab^{*}\quad\text{and} \quad \rip B_{s(g)}
    <a,b>=a^{*}b. 
  \end{equation}
\end{enumerate}
We say that the Fell bundle
$p:\B\to G$ is \emph{separable} if it is separable as a
Banach bundle.
\end{definition}

\begin{remark}[Saturated]
  It should be noted that our Fell bundles are saturated in that
  whenever $(g,h)\in G^{(2)}$, then
  $B_{g}\cdot B_{h}:=\operatorname{span}\set{ab:\text{$a\in B_{g}$ and
    $b\in B_{h}$}} $ is always dense in $B_{gh}$
\cite{muhwil:dm08}*{Lemma~1.2}.  This is a consequence of (FB5).
Some authors prefer to work with a weakened version of (FB5) where the
inner products in \eqref{eq:7} are not full.
\end{remark}

\begin{remark}
  \label{rem-assoc-cs} If $p\colon\B\to G$ is a Fell bundle, then the
  restriction $\B\restr\go$ is a \cs-bundle and $\Gamma_{0}(\go;\B)$
  is a \cs-algebra called the \emph{associated \cs-algebra} to
  $\B$.\footnote{The terminology is a bit challenging.  If $G$ has a
    Haar system, then one can also form the Fell-bundle \cs-algebra
    $\cs(G;\B)$ by viewing $\Gamma_{c}(G;\B)$ as a $*$-algebra and
    completing as in \cite{muhwil:dm08}.  
  }
\end{remark}

\subsection{Equivalence of Fell Bundles}
\label{sec:equiv-fell-bundl}

Suppose that $T$ is a left $G$-space.   Then we say that a Fell bundle
$p\colon \B\to G$ acts on (the left of) a Banach bundle $q:\E\to T$ if there is a
continuous map $(b,e) \mapsto b\cdot e$ from
$\B*\E:=\set{(b,e)\in\B\times\E:s(b)=r(q(e))}$ to $\E$ such that
\begin{enumerate}
\item $q(b\cdot e)=p(b)\cdot q(e)$,
\item $a\cdot (b\cdot e)=(ab)\cdot e$ for appropriate $a,b\in B$ and
  $e\in \E$, and
\item $\|b\cdot e\|\le \|b\|\|e\|$.
\end{enumerate}
Right actions of a Fell
bundle are defined similarly.

Let $T$ be a $(G,H)$ equivalence with open moment maps $\rho\colon T\to \go$
and $\sigma\colon T\to\ho$ as in \cite{wil:toolkit}*{Definition~2.29}.
It is shown in \cite{wil:toolkit}*{Lemma~2.42} that there  are open
continuous maps $\tau_{G}\colon T*_{\sigma}T\to 
G$ and $\tau_{H}\colon T*_{\rho}T\to H$ such that $\tau_{G}(e,f)\cdot f=e$
and $e\cdot \tau_{H}(e,f)=f$.

\begin{definition}
  [\cite{muhwil:dm08}*{Definition~6.1}]   \label{def-equi}
  Suppose the $T$ is a
  $(G,H)$-equivalence, and that $\pb\cdot \B\to G$ and $\pc\colon\CC\to H$
  are Fell bundles.  Then a Banach bundle $q\colon \E\to T$ is a
  \emph{$\B\sme\CC$-equivalence} if the following conditions hold.
  \begin{enumerate}[(E1)]
  \item There is a left $\B$-action and a right $\CC$-action on $\E$
    such that $b\cdot (e\cdot c)=(b\cdot e)\cdot c$ for composable
    $b\in \B$, $e\in E$, and $c\in \CC$.
  \item There are continuous sesquilinear maps $(e,f)\mapsto
    \lip\B<e,f>$ from $\E*_{\sigma}\E$ to $\B$ and $(e,f)\mapsto
    \rip\CC<e,f>$ from $\E*_{\rho}\E$ to $\CC$ such that
    \begin{enumerate}
    \item $\pb\bigl(\lip\B<e,f>\bigr) =\tau_{G}(q(e),q(f))$ and
      $\pc\bigl(\rip\CC<e,f>\bigr) =\tau_{H}(q(e),q(f))$,
    \item $\lip\B<e,f>^{*} =\lip\B<f,e>$ and $\rip\CC<e,f>^{*} =
      \rip\CC<f,e>$, 
    \item $\lip\B<b\cdot e,f>=b\lip\B<e,f>$ and $\rip\CC<e,f\cdot c> =
      \rip\CC<e,f>c$, and 
    \item $\lip\B <e,f>\cdot g=e\cdot \rip\CC<f,g>$.
    \end{enumerate}
\item With the actions and inner products coming from (E1) and (E2),
  each $E_{t}$ is a $B_{\rho(t)} \sme C_{\sigma(t)}$-\ib.
  \end{enumerate}

\end{definition}

\subsection{Fell Subbundles and Ideals}
\label{sec:fell-subb-ideals}

Naturally, a subbundle of a Fell bundle is called a Fell subbundle if
it is a Fell bundle in the inherited structure.

\begin{definition}\label{def-sub-bundle}
  Let $p\colon \B\to G$ be a Fell bundle over a groupoid $G$.   We call
  $\CC\subset \B$ a \emph{Fell subbundle}  if $\CC$ is a Banach
  subbundle such that $p\restr \CC\colon \CC\to G$ is a Fell bundle with
  respect to the inherited operations.  In particular, $\CC$ must be
  closed under multiplication and involution.
\end{definition}

We will be focused on  Fell subbundles that are multiplicatively
absorbing as follows.

\begin{definition}
  \label{def-fb-ideals} A Fell subbundle $\JJ$ of a Fell bundle $\B$
  is called an \emph{ideal} if $ab\in \JJ$ whenever $(a,b)\in
  \B^{(2)}$ and either $a\in \JJ$
  or $b\in \JJ$.
\end{definition}

\begin{example}\label{ex-std-ideal}
  Suppose that $\alpha\colon \grg\to \operatorname{Aut}(A)$ is a \cs-dynamical
  system for a group~$\grg$.  Let $\B=A\times \grg$ be the associated Fell
  bundle over $\grg$: $(a,s)(b,r)=(a\alpha_{s}(b),sr)$.  Let $I$ be an
  \emph{$\alpha$-invariant ideal} of $A$.  Then $\JJ=I\times \grg$ is an
  ideal in $\B$.
\end{example}

\begin{example}
  \label{ex-cox-alg}  Suppose that $\A$ is a \cs-bundle over $X$
  so that $A=\Gamma_{0}(X;\A)$ is a \cs-algebra.   Let $J$ be an ideal
  in $A$ and for each $x\in X$ let $J_{x}=\set{a(x):a\in J}$ so that $J_{x}$ is an ideal
  in $A_{x}$.    Let
  \begin{equation}
    \label{eq:11} \JJ=\coprod_{x\in X}J_{x}.
  \end{equation}
  It follows from Proposition~\ref{prop-fd-sub-bun} that $\JJ$ is a Banach
  subbundle, and in fact is obviously an ideal of the Fell bundle $\A$.  
  Clearly, $J\subset \Gamma_{0}(X;\JJ)$.  Since $J$ is an
  ideal in the $C_{0}(X)$-algebra $A$, if $\phi\in C_{0}(X)$ and
  $b\in J$, then $\phi\cdot b\in J$.  Now it follows from
  \cite{wil:crossed}*{Proposition~C.24} that $J$ is dense in
  $\Gamma_{0}(X;\JJ)$.  Therefore $J=\Gamma_{0}(X;\JJ)$.
\end{example}
\begin{lemma}\label{lem-ib-cohen}
  Suppose that $\JJ$ is an ideal in $\B$.  Then for each $g\in G$,
  $J_{g}$ is a $J_{r(g)} \sme J_{s(g)}$-\ib.   Furthermore,
  \begin{equation}
    \label{eq:9a}
    J_{g}=B_{g}\cdot
  J_{s(g)} = J_{r(g)}\cdot B_{g} 
\end{equation}
where we are taking advantage of Remark~\ref{rem-cohen}.
Furthermore,
\begin{equation}
  \label{eq:20}
  J_{r(g)}=\overline{J_{g}B_{g}^{*}}=\overline{B_{g}J_{g}^{*}}
  =\overline{J_{g}J_{g}^{*}} \quad\text{and} \quad
  J_{s(g)} =\overline{J_{g}^{*}B_{g}}=\overline{B_{g}^{*}J_{g}} =
  \overline{J_{g}^{*}J_{g}}. 
\end{equation}
\end{lemma}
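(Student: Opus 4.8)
The entire statement will follow by applying the Rieffel correspondence, Theorem~\ref{thm-rief-corr}, fibrewise to the $B_{r(g)}\sme B_{s(g)}$-\ib\ $B_g$. The starting observation is that $J_g=\JJ\cap B_g$ is a \emph{closed $B_{r(g)}\sme B_{s(g)}$-submodule} of $B_g$. It is a closed subspace because $\JJ$ is a Banach subbundle, and it is invariant under the left and right Fell-bundle actions because $\JJ$ is an ideal: if $a\in J_g$ and $c\in B_{r(g)}$, then $ca\in\JJ\cap B_g=J_g$ by \defnref{def-fb-ideals}, and symmetrically on the right. Thus $J_g\in\mathcal C(B_g)$, and I would run the correspondence with $\Y=J_g$.

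Parts~(a) and~(b) of Theorem~\ref{thm-rief-corr} applied to $\Y=J_g$ produce an ideal $K'=\overline{\lip B_{r(g)}<J_g,J_g>}=\overline{J_gJ_g^*}$ in $B_{r(g)}$ and an ideal $J'=\overline{\rip B_{s(g)}<J_g,J_g>}=\overline{J_g^*J_g}$ in $B_{s(g)}$, together with the identities $J_g=K'\cdot B_g=B_g\cdot J'$ (invoking Remark~\ref{rem-cohen} to drop the closures). The only real work is to identify $K'$ and $J'$ with $J_{r(g)}$ and $J_{s(g)}$. One inclusion is immediate from the ideal property: for $a,b\in J_g$ we have $a^*b\in\JJ\cap B_{s(g)}=J_{s(g)}$ and $ab^*\in\JJ\cap B_{r(g)}=J_{r(g)}$, so $J'\subseteq J_{s(g)}$ and $K'\subseteq J_{r(g)}$. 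For the reverse, note that $B_g\cdot J_{s(g)}\subseteq J_g$ (again by the ideal property), while $J_g=B_g\cdot J'\subseteq B_g\cdot J_{s(g)}$ because $J'\subseteq J_{s(g)}$; sandwiching gives $J_g=B_g\cdot J_{s(g)}$, and the symmetric argument gives $J_g=J_{r(g)}\cdot B_g$. This is exactly \eqref{eq:9a}. Feeding these back through the inverse maps of Theorem~\ref{thm-rief-corr}(b) — equivalently, using injectivity of $J\mapsto B_g\cdot J$ on ideals — forces $J'=J_{s(g)}$ and $K'=J_{r(g)}$.

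With the coefficient algebras identified, everything else is a readout. The chain of equalities \eqref{eq:20} is precisely \eqref{eq:3} and \eqref{eq:4} rewritten with the Fell-bundle inner products $\lip B_{r(g)}<a,b>=ab^*$ and $\rip B_{s(g)}<a,b>=a^*b$, namely $J_{r(g)}=K'=\overline{J_gJ_g^*}=\overline{J_gB_g^*}=\overline{B_gJ_g^*}$ and $J_{s(g)}=J'=\overline{J_g^*J_g}=\overline{J_g^*B_g}=\overline{B_g^*J_g}$. Finally, Theorem~\ref{thm-rief-corr}(c) applied to $\Y=J_g$ says that $J_g$ is a $K'\sme J'$-\ib\ for the restricted actions and inner products, which is exactly the asserted $J_{r(g)}\sme J_{s(g)}$-\ib\ structure, since those inner products are the restrictions of $ab^*$ and $a^*b$.

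The main point here is conceptual rather than computational: recognizing that an ideal $\JJ$ forces each $J_g$ to be a \emph{closed submodule} of the imprimitivity bimodule $B_g$, so that the classical Rieffel correspondence applies fibre by fibre. Once that is in place, the only step needing any argument is pinning the corresponding ideals to $J_{r(g)}$ and $J_{s(g)}$, handled by the elementary sandwich above. Should one prefer a self-contained argument for the reverse inclusion $J_g\subseteq B_g\cdot J_{s(g)}$, it can be obtained directly: for $a\in J_g$ one has $a^*a\in J_{s(g)}$, and $a\cdot u_n\to a$ where $u_n=a^*a(\tfrac1n+a^*a)^{-1}$ lies in $J_{s(g)}$ by functional calculus, whence $a\in\overline{B_g\cdot J_{s(g)}}=B_g\cdot J_{s(g)}$.
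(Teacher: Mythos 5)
Your proposal is correct, but it follows a genuinely different logical route than the paper's (very short) proof. The paper exploits the fact that, by \defnref{def-fb-ideals}, an ideal $\JJ$ is in particular a Fell subbundle, hence a Fell bundle in its own right: axiom (FB5) applied to $\JJ$ immediately gives the first assertion \emph{including fullness}, i.e.\ $\overline{J_gJ_g^*}=J_{r(g)}$ and $\overline{J_g^*J_g}=J_{s(g)}$, and this is what identifies the Rieffel ideals of the closed submodule $J_g\subset B_g$ with no computation; Theorem~\ref{thm-rief-corr} then supplies the cross terms in \eqref{eq:20} and the factorizations \eqref{eq:9a}. You never invoke the Fell-subbundle structure of $\JJ$ at all: you use only that the fibres are closed and that $\JJ$ absorbs multiplication, treat $J_g$ as a closed $B_{r(g)}\sme B_{s(g)}$-submodule, and pin down the coefficient ideals by your sandwich-plus-injectivity argument (or the functional-calculus variant), recovering the \ib\ structure as an \emph{output} of Theorem~\ref{thm-rief-corr}(c) rather than taking it as an input. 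The trade-off is instructive: your argument is longer than the stated hypothesis requires, but it proves more---it establishes the lemma under the weaker assumption that $\JJ$ is merely a weak ideal in the sense of Definition~\ref{def-weak-ideal}, which is essentially the content of the paper's Proposition~\ref{prop-weak-ideal} (proved there by the same initial reduction, but with the identification $I_g=J_{s(g)}$ done via an approximate unit of the form $\sum_k b_k^*b_k$ with $b_k\in B_g$, in place of your sandwich). One small point you should make explicit in the injectivity step: before citing Theorem~\ref{thm-rief-corr}(b) you need $J_{s(g)}$ to be an ideal of the \cs-algebra $B_{s(g)}$; this is immediate, since it is a closed two-sided (hence automatically self-adjoint) ideal by the absorbing property, but it deserves a sentence.
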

\begin{proof}
  The first assertion is immediate since $\JJ$ is, by assumption, a
  Fell subbundle.  The remaining statements follow from the 
  Rieffel correspondence---see Theorem~\ref{thm-rief-corr}. 
\end{proof}

\begin{definition}\label{def-weak-ideal}
  Let $p:\B\to G$ be a Fell bundle and $\JJ\subset \B$ a Banach
  subbundle.  We call $\JJ$ a \emph{weak ideal of $\B$} if whenever
  $(a,b)\in \B^{(2)}$ then $ab\in \JJ$ whenever either $a$ or $b$ is
  in $\JJ$.
\end{definition}

\begin{remark}\label{rem-weak-ideal}
  If $I$ is an ideal in a \cs-algebra, then the existence of
  approximate identities implies that $I$ is $*$-closed and hence a
  \cs-subalgebra.   A similar serendipity applies to weak ideals.
\end{remark}

\begin{prop}
\label{prop-weak-ideal}  If $p:\B\to G$ is a Fell bundle, then every
weak ideal in $\B$ is an ideal.
\end{prop}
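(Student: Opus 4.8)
The plan is to verify the two features that separate an ideal (Definition~\ref{def-fb-ideals}) from a weak ideal (Definition~\ref{def-weak-ideal}): that $\JJ$ is a \emph{Fell subbundle} — in particular closed under the involution — and that each fibre $J_{g}=B_{g}\cap\JJ$ is a full $J_{r(g)}\sme J_{s(g)}$-\ib, so that $p\restr\JJ\colon\JJ\to G$ is genuinely a Fell bundle. The multiplicative absorption demanded by Definition~\ref{def-fb-ideals} is exactly the weak-ideal hypothesis, so nothing new is needed there. I would begin over the unit space: for $u\in\go$ the weak-ideal property makes $J_{u}$ a closed two-sided ring ideal of the \cs-algebra $B_{u}$, and by the standard approximate-identity argument alluded to in Remark~\ref{rem-weak-ideal} such an ideal is automatically self-adjoint, hence a \cs-subalgebra. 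This disposes of axiom~(FB4) and, crucially, tells us that every $J_{s(g)}$ and $J_{r(g)}$ is a \cs-algebra on which functional calculus is available.

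The crux is closure under the involution. Fix $g\in G$ and $a\in J_{g}$. Since $(a^{*},a)\in\B^{(2)}$ and $a\in\JJ$, the weak-ideal property gives $b:=a^{*}a=\rip B_{s(g)}<a,a>\in J_{s(g)}$. Working in the unitization of the \cs-algebra $J_{s(g)}$, set $u_{n}=b\,(b+\tfrac1n)^{-1}$; because $t\mapsto t(t+\tfrac1n)^{-1}$ vanishes at $0$, we have $u_{n}=u_{n}^{*}\in J_{s(g)}$. A standard Hilbert-module computation in the \ib\ $B_{g}$ (see \cite{rw:morita}) shows $\|a-a\cdot u_{n}\|\to0$, and taking adjoints yields $u_{n}a^{*}\to a^{*}$. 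Now $u_{n}\in J_{s(g)}\subset\JJ$ and $(u_{n},a^{*})\in\B^{(2)}$, so the weak-ideal property gives $u_{n}a^{*}\in J_{g^{-1}}$; as $J_{g^{-1}}$ is closed we conclude $a^{*}\in J_{g^{-1}}$. Thus $\JJ$ is closed under the involution, and (being trivially closed under multiplication) it is a candidate Fell subbundle. I expect this to be the only genuine difficulty.

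It remains to see that each $J_{g}$ is a full $J_{r(g)}\sme J_{s(g)}$-\ib, which I would extract from the classical Rieffel correspondence rather than reprove by hand. First I would record that
\[
  J_{g}=\overline{B_{g}\cdot J_{s(g)}};
\]
the inclusion $\supseteq$ is immediate from the weak-ideal property, while $\subseteq$ follows from the convergence $a\cdot u_{n}\to a$ established above. Viewing $B_{g}$ as a $B_{r(g)}\sme B_{s(g)}$-\ib\ and $J_{s(g)}$ as an ideal of $B_{s(g)}$, Theorem~\ref{thm-rief-corr} identifies $J_{g}=B_{g}\cdot J_{s(g)}$ as the closed submodule corresponding to $J_{s(g)}$, shows that $J_{g}$ is a $K\sme J_{s(g)}$-\ib\ for the ideal $K=\overline{\lip B_{r(g)}<J_{g},J_{g}>}$ of $B_{r(g)}$, and guarantees that $\Y\mapsto\overline{\lip B_{r(g)}<\Y,\Y>}$ and $I\mapsto\overline{I\cdot B_{g}}$ are mutually inverse lattice isomorphisms.

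Finally I would identify $K$ with $J_{r(g)}$. On one hand $K=\overline{J_{g}J_{g}^{*}}\subseteq J_{r(g)}$ by the weak-ideal property; on the other hand $J_{r(g)}\subset\JJ$ gives $\overline{J_{r(g)}\cdot B_{g}}\subseteq J_{g}=\overline{K\cdot B_{g}}$, while the inclusion $K\subseteq J_{r(g)}$ gives the reverse containment, so $\overline{J_{r(g)}\cdot B_{g}}=\overline{K\cdot B_{g}}$. Injectivity of $I\mapsto\overline{I\cdot B_{g}}$ then forces $K=J_{r(g)}$, whence $J_{g}$ is a $J_{r(g)}\sme J_{s(g)}$-\ib\ and (FB5) holds. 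With (FB1)--(FB3) inherited from $\B$ (the multiplication and involution restrict to continuous maps into $\JJ$ by the closure properties just proved) and (FB4)--(FB5) in hand, $p\restr\JJ$ is a Fell bundle, so $\JJ$ is a Fell subbundle (Definition~\ref{def-sub-bundle}); together with the weak-ideal absorption this exhibits $\JJ$ as an ideal.
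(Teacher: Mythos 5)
Your proof is correct, and it shares its skeleton with the paper's argument---both view $J_{g}$ as a closed submodule of the $B_{r(g)}\sme B_{s(g)}$-\ib\ $B_{g}$ and invoke Theorem~\ref{thm-rief-corr} fibrewise to obtain (FB5)---but the two delicate steps are carried out by genuinely different means. The paper first identifies both coefficient ideals: using fullness of the inner products on $B_{g}$ it builds an approximate unit $(e_{i})$ for $B_{s(g)}$ out of finite sums $\sum_{k}b_{k}^{*}b_{k}$ with $b_{k}\in B_{g}$, so that for $c\in J_{s(g)}$ one has $e_{i}c\in\overline{\rip B_{s(g)}<B_{g},J_{g}>}$ and hence $\overline{\rip B_{s(g)}<B_{g},J_{g}>}=J_{s(g)}$, and symmetrically $\overline{\lip B_{r(g)}<J_{g},B_{g}>}=J_{r(g)}$; closure under the involution then falls out algebraically from $J_{g}^{*}=(B_{g}\cdot J_{s(g)})^{*}=J_{s(g)}\cdot B_{g^{-1}}=J_{g^{-1}}$. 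You do it in the opposite order: involution-closure comes first, via an element-wise functional-calculus argument ($u_{n}=b(b+\tfrac1n)^{-1}$ with $b=a^{*}a\in J_{s(g)}$, so $u_{n}a^{*}=(au_{n})^{*}\to a^{*}$ lies in the norm-closed fibre $J_{g^{-1}}$ by the weak-ideal property), and the left coefficient identification $K=J_{r(g)}$ comes not from an approximate unit but from the double inclusion $\overline{J_{r(g)}\cdot B_{g}}=\overline{K\cdot B_{g}}$ together with injectivity of the lattice isomorphism $I\mapsto I\cdot B_{g}$ from Theorem~\ref{thm-rief-corr}(b). Each route buys something: the paper's approximate-unit argument treats both sides symmetrically and makes self-adjointness a one-line corollary, while yours isolates the one real difficulty (the involution) in a self-contained computation and replaces the second approximate-unit argument with a soft lattice-theoretic one. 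Both are complete proofs.
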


\begin{proof}
  Let $\JJ$ be a weak ideal in $\B$.  
  Since $J_{g}$ is closed with respect to the norm on $B_{g}$, it is a 
closed $B(r(g))\sme B(s(g))$-submodule of the $B(r(g))\sme
B(s(g))$-\ib\ $B_{g}$.   In particular, $J_{u}$ is an ideal in the
\cs-algebra $B_{u}$ for all $u\in \go$.
Then, applying the Rieffel correspondence, $J_{g}$ is a $K_{g}\sme
I_{g}$-\ib\ where $I_{g}$ is the closed linear span of elements of the
form $b^{*}a$ with
$b\in B_{g}$ and $a\in J_{g}$.    Similarly, $K_{g}$ is the closed
linear span of products $ab^{*}$ with $a\in J_{g}$ and $b\in B_{g}$.
Furthermore,
\begin{equation}
  \label{eq:29}
J_{g}=B_{g}\cdot I_{g}=K_{g}\cdot B_{g}.
\end{equation}

Note that $I_{g}$ is an
ideal in $J_{s(g)}$.  Fix $c\in J_{s(g)}$.  If $b\in
B_{g}$, then $bc\in J_{g}$ by the weak ideal property.   Since $B_{g}^{*}\cdot
B_{g}$ is a dense ideal in $B_{s(g)}$, we can find an approximate unit
$(e_{i})$ in $B_{s(g)}$ where each $e_{i}=\sum_{k=1}^{n_{i}}
b_{k}^{*}b_{k}$ with each $b_{k}\in B_{g}$.  But then $e_{i}c$ is in
$I_{g}$ and $e_{i}c\to c$.  Hence $c\in  I_{g}$ and $I_{g}=J_{s(g)}$.
A similar argument shows that $K_{g}=J_{r(g)}$.  

Since $J_{s(g)}$ is an ideal in the \cs-algebra
$B_{s(g)}$, we have
$J_{s(g)}^{*}=J_{s(g)}$.
Hence, using \eqref{eq:29}, we have
\begin{equation}
  \label{eq:5a}
  J_{g}^{*}= (B_{g}\cdot J_{s(g)})^{*}= J_{s(g)}^{*}\cdot B_{g}^{*} =
  J_{r(g^{-1})} \cdot B_{g^{-1}} =J_{g^{-1}}.
\end{equation}
In particular, $\JJ^{*}=\JJ$ and $\JJ$ is closed under taking
adjoints.
  Since $\JJ$ is a weak ideal, it is closed under multiplication and
  we just showed it is also closed under the adjoint operation.   Now
  we just have to observe that it is a Fell bundle.
  But this follows
  from the above discussion and identification of $I_{g}$ with
  $J_{s(g)}$ and $K_{g}$ with $J_{r(g)}$.  
\end{proof}

It is standard to think of a Fell bundle $p\colon \B \to G$ as a
generalized groupoid crossed product of $G$ acting on the associated
\cs-algebra $A:=\Gamma_{0}(\go;\B)$.  As an example of this rubric, it
is shown in \cite{ionwil:hjm11}*{Proposition~2.2} that there is a
natural action of $G$ on $\Prim A$ given as follows.  Note that
$\Prim A$ is naturally fibred over $\go$.  Since $B_{g}$ is a
$B_{r(g)} \sme B_{s(g)}$-\ib, the Rieffel Correspondence induces a
homeomorphism $\phi_{g}:\Prim(B_{s(g)})\to \Prim(B_{r(g)})$
\cite{rw:morita}*{Corollary~3.33}. Then the $G$-action is given by
$g\cdot P_{s(g)}= \phi_{g}(P_{s(g)})$.  Naturally, an ideal $I$ in $A$
is called $G$-invariant if
$\operatorname{hull}(I):=\set{P\in \Prim A:P\supset I}$ is
$G$-invariant.  If $I$ is an ideal in $A$ then we let $I_{u}=q_{u}(I)$
where $q_{u}:\Gamma_{0}(\go;\B)\to B_{u}$ is the evaluation map.

  \begin{prop}[\cite{ionwil:hjm11}]
    \label{prop-ionwil-bi}  Suppose 
  $p\colon\B\to G$ is a Fell bundle
    and that $I$ is a $G$-invariant ideal in the associated \cs-algebra
    $\Gamma_{0}(\go;\B)$.   Then
    \begin{equation}
      \B_{I}:=\set{b\in\B:b^{*}b\in I_{s(b)}}
    \end{equation}
    is an ideal in $\B$.  Conversely, if $\JJ$ is an ideal in $\B$,
    then $I=\Gamma_{0}(\go;\JJ)$ is a $G$-invariant ideal in
    $\Gamma_{0}(\go;\B)$ and $\JJ=\B_{I}$.
  \end{prop}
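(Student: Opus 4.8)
The plan is to prove everything fibrewise using the Rieffel correspondence and then reassemble into a bundle. The one computation I would isolate at the outset is the following submodule characterization: if $\X$ is an $A\sme B$-\ib\ and $J$ is an ideal in $B$, then $\set{x\in\X:\rip B<x,x>\in J}=\X\cdot J$. The inclusion $\supseteq$ is immediate, since for $d\in J$ one has $\rip B<x\cdot d,x\cdot d>=d^{*}\rip B<x,x>d\in J$; for $\subseteq$ I would take an approximate unit $(e_{i})$ for $J$ and estimate $\|x-x\cdot e_{i}\|^{2}=\|\rip B<x-x\cdot e_{i},x-x\cdot e_{i}>\|\to0$, so that $x\in\overline{\X\cdot J}=\X\cdot J$ by Remark~\ref{rem-cohen}. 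Applied fibrewise with $\X=B_{g}$, $B=B_{s(g)}$, and $J=I_{s(g)}$ (an ideal in $B_{s(g)}$ because $I$ is an ideal in the $C_{0}(\go)$-algebra $\Gamma_{0}(\go;\B)$), this identifies the fibre $(\B_{I})_{g}=\set{b\in B_{g}:b^{*}b\in I_{s(g)}}$ with the Rieffel submodule $\Y_{g}:=B_{g}\cdot I_{s(g)}$, and dually gives $\set{b\in B_{g}:bb^{*}\in I_{r(g)}}=I_{r(g)}\cdot B_{g}$.

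For the forward direction the crucial translation is of $G$-invariance. By the definition of the $G$-action on $\Prim\Gamma_{0}(\go;\B)$, invariance of $\operatorname{hull}(I)$ says precisely that $\phi_{g}(\operatorname{hull}(I_{s(g)}))=\operatorname{hull}(I_{r(g)})$ for every $g$; since $\phi_{g}$ is the homeomorphism of primitive ideal spaces attached to the \ib\ $B_{g}$, and corresponding ideals under the Rieffel correspondence have matching hulls, this is equivalent to the submodule $\Y_{g}$ corresponding \emph{on the left} to $I_{r(g)}$, i.e. $\overline{\lip B_{r(g)}<\Y_{g},\Y_{g}>}=I_{r(g)}$ and hence $\Y_{g}=I_{r(g)}\cdot B_{g}$. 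With both the left and right descriptions of $\Y_{g}$ in hand I would verify the ideal axioms of Definition~\ref{def-fb-ideals} fibrewise. Closure under involution uses the left description: if $b\in(\B_{I})_{g}=\Y_{g}$ then $bb^{*}=\lip B_{r(g)}<b,b>\in I_{r(g)}$, so $b^{*}\in(\B_{I})_{g^{-1}}$. For multiplicative absorption with $(a,b)\in\B^{(2)}$, $p(a)=g$, $p(b)=h$: if $b\in\B_{I}$, write $b=b'\cdot c$ with $c\in I_{s(h)}=I_{s(gh)}$ (Cohen factorization), so $ab=(ab')c\in B_{gh}\cdot I_{s(gh)}=(\B_{I})_{gh}$; if instead $a\in\B_{I}$, write $a=c\cdot a'$ with $c\in I_{r(g)}=I_{r(gh)}$ and argue symmetrically with the left description. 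Finally, over a unit $u$ one has $(\B_{I})_{u}=I_{u}$, a \cs-algebra, and over $g$ the fibre $\Y_{g}$ is an $I_{r(g)}\sme I_{s(g)}$-\ib\ by Theorem~\ref{thm-rief-corr}(c).

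It remains to see that $\B_{I}$ is a Banach subbundle, which is the only genuinely topological point. Here I would apply Proposition~\ref{prop-fd-sub-bun}: for $f\in\Gamma_{c}(G;\B)$ and $c\in I$, the section $g\mapsto f(g)\cdot c(s(g))$ is continuous (the continuous multiplication composed with the continuous maps $f$, $c$, and $s$), has compact support, and takes values in $\Y_{g}=(\B_{I})_{g}$. Given $b\in B_{g}$ and $d\in I_{s(g)}$, choosing $f$ with $f(g)=b$ (Theorem~\ref{thm-enough-sections}) and $c\in I$ with $c(s(g))=d$ produces such a section with value $b\cdot d$ at $g$; since these products span a dense subspace of $\Y_{g}$, the hypothesis of Proposition~\ref{prop-fd-sub-bun} holds, $\B_{I}$ is a Banach subbundle, and hence a Fell subbundle and an ideal.

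For the converse, given an ideal $\JJ$ I would set $I=\Gamma_{0}(\go;\JJ)$ and argue as in Example~\ref{ex-cox-alg} that $I$ is an ideal in $\Gamma_{0}(\go;\B)$ with $I_{u}=J_{u}$ for every unit $u$. Then $G$-invariance comes for free from Lemma~\ref{lem-ib-cohen}: since $J_{g}=B_{g}\cdot J_{s(g)}=J_{r(g)}\cdot B_{g}$ is a $J_{r(g)}\sme J_{s(g)}$-\ib, the submodule $J_{g}$ of $B_{g}$ corresponds on the right to $J_{s(g)}=I_{s(g)}$ and on the left to $J_{r(g)}=I_{r(g)}$, so $\phi_{g}$ carries $\operatorname{hull}(I_{s(g)})$ onto $\operatorname{hull}(I_{r(g)})$. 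The equality $\JJ=\B_{I}$ is then fibrewise, via the submodule characterization of the first paragraph: $(\B_{I})_{g}=\set{b\in B_{g}:b^{*}b\in I_{s(g)}}=B_{g}\cdot I_{s(g)}=B_{g}\cdot J_{s(g)}=J_{g}=\JJ_{g}$. The main obstacle throughout is pinning down the equivalence between $G$-invariance and the left-module description of $\Y_{g}$, since that single fact is what powers both closure under involution and left absorption in the forward direction and delivers $G$-invariance in the converse; the Banach-subbundle verification is the only step needing the bundle topology rather than pure Rieffel-correspondence bookkeeping.
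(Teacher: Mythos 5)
Your proposal is correct, and every step checks out: the submodule characterization $\set{x\in\X:\rip B<x,x>\in J}=\X\cdot J$ (your approximate-unit estimate is sound, and this is in fact \cite{rw:morita}*{Proposition~3.24}, which the paper invokes elsewhere), the translation of $G$-invariance into the two-sided description $B_{g}\cdot I_{s(g)}=I_{r(g)}\cdot B_{g}$, the fibrewise verification of involution-closure and absorption via Cohen factorization, the Banach-subbundle step via Proposition~\ref{prop-fd-sub-bun} using the sections $g\mapsto f(g)\cdot c(s(g))$, and the converse via Lemma~\ref{lem-ib-cohen}. The difference from the paper is one of self-containedness rather than of skeleton. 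The paper's proof outsources precisely the two points you labor over: the equivalence of $G$-invariance with $\phi_{g}(I_{s(g)})=I_{r(g)}$ (hence with the two-sided fibre description) is quoted from the proof of Lemma~3.1 of \cite{ionwil:hjm11}, and the fact that $\B_{I}$ is a Fell subbundle---including the topological openness issue---is quoted from Proposition~3.3 of \cite{ionwil:hjm11}; the paper's own text then consists only of the absorption computation (its \eqref{eq:9} and \eqref{eq:10}, which match your Cohen-factorization argument) and the short converse, which you reproduce in essentially identical form. What your route buys is a proof that needs nothing from \cite{ionwil:hjm11} beyond the definition of the $G$-action on $\Prim\Gamma_{0}(\go;\B)$: the fibre identification $(\B_{I})_{g}=B_{g}\cdot I_{s(g)}$, the hull-matching argument for invariance, and the explicit section construction feeding Proposition~\ref{prop-fd-sub-bun} replace the citations. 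What the paper's route buys is brevity, at the cost of sending the reader to the earlier article for the two substantive ingredients. The only steps you state without proof---that the Rieffel homeomorphism of primitive-ideal spaces carries hulls to hulls of induced ideals, and that $\Gamma_{0}(\go;\JJ)$ is an ideal in $\Gamma_{0}(\go;\B)$ with $I_{u}=J_{u}$---are at the same citation level the paper itself allows (the former is \cite{rw:morita}*{Corollary~3.33} territory, the latter is the paper's own ``as in Example~\ref{ex-cox-alg}'' gloss plus Theorem~\ref{thm-enough-sections}), so neither constitutes a gap.
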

  \begin{proof}
    It follows from the proof of \cite{ionwil:hjm11}*{Lemma~3.1} that
    an ideal $I\subset \Gamma_{0}(\go;\B)$ is $G$-invariant if and
    only if for all $g$ we have $\phi_{g}(I_{s(g)})=I_{r(g)}$.   By
    the Rieffel correspondence, the latter is equivalent to 
    \begin{equation}
      \label{eq:2} \B_{g}\cdot
    I_{s(b)}=I_{r(b)}\cdot B_{g}\quad\text{for all $g\in
      G$.}
    \end{equation}
    
    Suppose that $I$ is $G$-invariant.   Then it follows from
    \cite{ionwil:hjm11}*{Proposition~3.3} that
    $\JJ:=\B_{I}$ is a Fell 
    subbundle such that \eqref{eq:2} holds.   Suppose that
    $a\in B_{g}$ and
    $b\in B_{h}$ are composable. If $a\in \JJ$, then
    \begin{equation}
      \label{eq:9}
      ab\in I_{r(g)}\cdot B_{g}B_{h} \subset I_{r(gh)} \cdot B_{gh}=J_{gh}.
    \end{equation}
Similarly, if $b\in \JJ$, then
\begin{equation}
  \label{eq:10}
  ab\in B_{g}B_{h}\cdot I_{s(h)} \subset B_{gh}\cdot I_{s(gh)} =J_{gh}
\end{equation}
and $B_{I}$ is an ideal.

Now suppose that $\JJ$ is an ideal in $\B$.   Let
\begin{equation}
  \label{eq:12}
  I=\Gamma_{0}(X;\JJ).
\end{equation}
Then as in Example~\ref{ex-cox-alg}, we have $I_{u}=J_{u}$.   Now it
follows from Lemma~\ref{lem-ib-cohen} and \eqref{eq:2} that $I$ is
$G$-invariant.   Since $\B_{I}$ and $\JJ$ have the same fibres,
clearly $\B_{I}=\JJ$.
\end{proof}

\section{The Rieffel Correspondence for Fell Bundle Equivalences}
\label{sec:rieff-corr-fell}

In this section, we let $\qe\colon\E\to T$ be an equivalence between
$\pb\colon \B \to H$ and $\pc\colon\CC\to K$.  In particular, $T$ is a
$(H,K)$-equivalence and we let $\rho\colon T\to \ho$ and
$\sigma\colon T\to \ko$ be the open moment maps.

We will need the following observation from
\cite{muhwil:dm08}*{Lemma~6.2}.
\begin{lemma}
  \label{lem-6.2}
  As above, let \/$\qe\colon\E\to T$ be a Fell-bundle equivalence
  between $\pb\colon \B\to H$ and $\pc\colon \CC\to K$.  Then
  $(b,e) \mapsto b\cdot e$ induces an \ib\ isomorphism of
  $B_{h} \tensor_{B_{\rho(t)}} E_{t}$ onto $E_{h\cdot t}$.  Similarly,
  $(e,c)\mapsto e\cdot c$ induces an isomorphism between
  $E_{t}\tensor_{C_{\sigma(t)}}C_{k}$ and $E_{t\cdot k}$.
\end{lemma}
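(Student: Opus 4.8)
The plan is to show that the claimed isomorphism is implemented by the action itself, $b\tensor e\mapsto b\cdot e$, and that this map is an isometric bimodule map with dense range. Write $u=s(h)=\rho(t)$, so that $h$ and $t$ are composable. Since the moment maps satisfy $\rho(h\cdot t)=r(h)$ and $\sigma(h\cdot t)=\sigma(t)$, both $B_{h}\tensor_{B_{u}}E_{t}$ and $E_{h\cdot t}$ are $B_{r(h)}\sme C_{\sigma(t)}$-\ib s, the first being the usual balanced tensor product of the $B_{r(h)}\sme B_{u}$-bimodule $B_{h}$ with the $B_{u}\sme C_{\sigma(t)}$-bimodule $E_{t}$. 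The point that makes the standard tensor-product formulas available is that, by (E2)--(E3), the bimodule structure on $E_{t}$ is exactly the restriction of the equivalence inner products to the fibre. First I would check that $(b,e)\mapsto b\cdot e$ is balanced over $B_{u}$ and respects the outer actions: balancing is $(ba)\cdot e=b\cdot(a\cdot e)$ for $a\in B_{u}$, while compatibility with the left $B_{r(h)}$- and right $C_{\sigma(t)}$-actions is the action axiom $a\cdot(b\cdot e)=(ab)\cdot e$ together with the associativity in (E1). Thus the action descends to a $B_{r(h)}\sme C_{\sigma(t)}$-bimodule map $\bar\Phi\colon B_{h}\tensor_{B_{u}}E_{t}\to E_{h\cdot t}$.

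The key computation is that $\bar\Phi$ preserves the left $B_{r(h)}$-valued inner product. On the tensor product this inner product sends $(b\tensor e,b'\tensor e')$ to $b\,\lip\B<e,e'>\,b'^{*}$, where $\lip\B<e,e'>\in B_{u}$ is the fibre inner product on $E_{t}$ and $b'^{*}$ enters through the left Fell-bundle inner product $bb'^{*}$ on $B_{h}$ from (FB5). On the other side, using (E2)(c) to extract the left action from the first variable and (E2)(b) to move the action across the bracket,
\[
  \lip\B<b\cdot e,b'\cdot e'>=b\lip\B<e,b'\cdot e'>=b\bigl(b'\lip\B<e',e>\bigr)^{*}=b\lip\B<e,e'>b'^{*},
\]
so the two agree. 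I expect this manipulation to be short once the fibre bookkeeping is in place; the only real care here is to confirm that each product lands over the correct element of $H$ (for instance that $\lip\B<e,e'>\in B_{u}$ because $\tau_{H}(t,t)=\rho(t)=u$).

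It then remains to see that $\bar\Phi$ is onto. Because it preserves the left inner product it is isometric, hence extends to an isometry of $B_{h}\tensor_{B_{u}}E_{t}$ onto the closure of its range; and a bimodule map that is an isometry for one inner product and is surjective is automatically an \ib\ isomorphism, so it then also intertwines the right inner products. The substantive point is therefore the density of $B_{h}\cdot E_{t}$ in $E_{h\cdot t}$, which I would obtain from saturation: nondegeneracy of the left action gives $E_{h\cdot t}=\overline{B_{r(h)}\cdot E_{h\cdot t}}$, fullness of the Fell-bundle inner product gives $B_{r(h)}=\overline{B_{h}B_{h}^{*}}$, and $B_{h}^{*}\cdot E_{h\cdot t}\subset E_{t}$, whence $E_{h\cdot t}=\overline{B_{h}B_{h}^{*}\cdot E_{h\cdot t}}\subset\overline{B_{h}\cdot E_{t}}$. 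This settles the first isomorphism, and the statement for the right action, $E_{t}\tensor_{C_{\sigma(t)}}C_{k}\cong E_{t\cdot k}$, follows by the symmetric argument, now preserving the right $C_{\sigma(t\cdot k)}$-valued inner product.
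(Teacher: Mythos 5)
Your proof is correct, and since the paper does not prove this lemma at all---it is quoted directly from \cite{muhwil:dm08}*{Lemma~6.2}---your argument supplies exactly the standard reasoning that citation delegates: the computation $\lip\B<b\cdot e,b'\cdot e'>=b\lip\B<e,e'>b'^{*}$ identifies the map with an isometry for the canonical left inner product on $B_{h}\tensor_{B_{\rho(t)}}E_{t}$, and density of $B_{h}\cdot E_{t}$ in $E_{h\cdot t}$ follows from nondegeneracy of the fibre \ib\ structure in (E3) together with fullness $\overline{B_{h}B_{h}^{*}}=B_{r(h)}$ from (FB5). Your closing appeal to the fact that a surjective bimodule map preserving one inner product is automatically an \ib\ isomorphism is also sound, so there is nothing to fix.
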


\begin{cor}
  \label{cor-ej} Let $\E$, $\B$, and $\CC$ be as above.  Let $\JJ$ be
  an ideal in $\CC$ and $\sigma(t)=r(k)$.  Then
  $\overline{E_{t}\cdot J_{k}}=E_{t\cdot k}\cdot J_{s(k)}$.
\end{cor}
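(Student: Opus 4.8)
The plan is to reduce everything to the factorization identity of Lemma~\ref{lem-ib-cohen} together with the imprimitivity-bimodule isomorphism of Lemma~\ref{lem-6.2}, deferring all closures to the very end where Cohen factorization (Remark~\ref{rem-cohen}) lets me drop them. The whole argument is formal once those two fibrewise inputs are in place; no fresh estimates about the bundle topology will be needed.

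First I would apply Lemma~\ref{lem-ib-cohen} to the ideal $\JJ$ of the $K$-bundle $\CC$, which gives the unclosed factorization $J_k = C_k\cdot J_{s(k)}$, with $J_{s(k)}$ an ideal in the \cs-algebra $C_{s(k)}$. Substituting this and re-associating the right $\CC$-action on $\E$ (so that $(e\cdot c)\cdot d = e\cdot(cd)$ for $e\in E_t$, $c\in C_k$, $d\in J_{s(k)}$, where $cd\in C_{k\cdot s(k)}=C_k$) yields the span-level identity
\[
E_t\cdot J_k \;=\; E_t\cdot\bigl(C_k\cdot J_{s(k)}\bigr) \;=\; \bigl(E_t\cdot C_k\bigr)\cdot J_{s(k)}.
\]
Both inclusions follow by expanding a finite sum $\sum_i c_i d_i\in C_k\cdot J_{s(k)}$ and re-associating; using the Cohen form of $J_k=C_k\cdot J_{s(k)}$ makes this exact rather than up-to-closure. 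This is the step that trades the $J_k$ on the left for the $J_{s(k)}$ that appears on the right.

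Next I would invoke Lemma~\ref{lem-6.2}: the map $(e,c)\mapsto e\cdot c$ induces an \ib\ isomorphism $E_t\tensor_{C_{\sigma(t)}}C_k\cong E_{t\cdot k}$, so the span $E_t\cdot C_k$ of simple products is dense in $E_{t\cdot k}$. Taking closures in the displayed identity, it then remains to show
\[
\overline{\bigl(E_t\cdot C_k\bigr)\cdot J_{s(k)}} \;=\; \overline{E_{t\cdot k}\cdot J_{s(k)}},
\]
that is, that enlarging a dense subset $E_t\cdot C_k\subset E_{t\cdot k}$ does not change the closure of its product with the ideal $J_{s(k)}$. One inclusion is immediate; for the other, given $x\in E_{t\cdot k}$ and $d\in J_{s(k)}$, pick $x_i\in E_t\cdot C_k$ with $x_i\to x$, whence $\|x_i\cdot d - x\cdot d\|\le\|x_i-x\|\,\|d\|\to0$ by the module bound, so $x\cdot d\in\overline{(E_t\cdot C_k)\cdot J_{s(k)}}$; spanning and closing finishes it. Finally, since $E_{t\cdot k}$ is a right Hilbert $C_{\sigma(t\cdot k)}=C_{s(k)}$-module and $J_{s(k)}$ is an ideal, Remark~\ref{rem-cohen} gives $\overline{E_{t\cdot k}\cdot J_{s(k)}}=E_{t\cdot k}\cdot J_{s(k)}$, which is exactly the right-hand side of the claim.

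The one genuinely delicate point is the bookkeeping around the closures. The subtlety is that Lemma~\ref{lem-6.2} only gives $E_t\cdot C_k$ \emph{dense} in $E_{t\cdot k}$, not equal, so I cannot substitute $E_{t\cdot k}$ for $E_t\cdot C_k$ directly; the density-inside-closure lemma above is what bridges that gap, and it is essential that the first step use $J_k=C_k\cdot J_{s(k)}$ in its unclosed Cohen form so that the span identity there is exact. Everything else is a formal consequence of associativity and contractivity of the actions, both of which are already encoded in Lemmas~\ref{lem-ib-cohen} and~\ref{lem-6.2}.
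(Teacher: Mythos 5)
Your proof is correct and follows essentially the same route as the paper: the paper's two-line argument also combines Lemma~\ref{lem-ib-cohen} (the Cohen-form factorization $J_{k}=C_{k}\cdot J_{s(k)}$) with Lemma~\ref{lem-6.2} ($\overline{E_{t}\cdot C_{k}}=E_{t\cdot k}$) to get $\overline{E_{t}\cdot J_{k}}=\overline{E_{t}\cdot C_{k}\cdot J_{s(k)}}=E_{t\cdot k}\cdot J_{s(k)}$. The only difference is that you spell out the closure bookkeeping (the density-plus-contractivity step and the final appeal to Remark~\ref{rem-cohen}) that the paper leaves implicit.
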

\begin{proof}
  Lemma~\ref{lem-6.2} implies that
  $\overline{E_{t}\cdot C_{k}}=E_{t\cdot k}$.  Therefore by
  Lemma~\ref{lem-ib-cohen} we have
  \begin{align}
    \overline{E_{t}\cdot
    J_{k}} = \overline{E_{t}\cdot C_{k}\cdot J_{s(k)}}=E_{t\cdot
    k}\cdot J_{s(k)}.\qquad\qedhere
  \end{align}
\end{proof}

\begin{definition}
  \label{def-bc-submod} Let $\qe\colon\E\to T$ be an equivalence
  between $\pb\colon\B\to H$ and $\pc\colon\CC\to K$.  Then a Banach
  submodule $\M$ of $\E$ is called a \emph{Banach
    $\B\sme\CC$-submodule} if $B_{h}\cdot M_{t}\subset M_{h\cdot t}$
  whenever $s(h)=\rho(t)$ and $M_{t}\cdot C_{k} \subset M_{t\cdot k}$
  whenever $\sigma(t)=r(k)$.  We say that $\M$ is \emph{full} if
  $\overline{B_{h}\cdot M_{t}}=M_{h\cdot t}$ and
  $\overline{M_{t}\cdot C_{k}}=M_{t\cdot k}$.
\end{definition}

\begin{prop}
  \label{prop-e-j} Let $\E$, $\B$, and $\CC$ be as above.  If $\JJ$ is
  an ideal in $\CC$, then
  \begin{equation}
    \label{eq:13}
    \E\cdot \JJ:= \bigcup_{\set{(t,k):\sigma(t)=r(k)}} E_{t}\cdot J_{k}
  \end{equation}
  is a full Banach $\B\sme\CC$-submodule of $\E$ with
  $(\E\cdot \JJ)_{t}= E_{t}\cdot J_{\sigma(t)}$.  Similarly, if $\KK$
  is an ideal in $\B$, then $\KK\cdot \E$ is a full Banach
  $\B\sme\CC$-submodule with
  $(\KK\cdot \E)_{t} =K_{\rho(t)} \cdot E_{t}$.
\end{prop}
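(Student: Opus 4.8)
The plan is to prove the assertion for $\E\cdot\JJ$ and then obtain the statement for $\KK\cdot\E$ by the mirror-image argument, interchanging the roles of the left $\B$-action and the right $\CC$-action. First I would identify the fibres. Fix $t_0\in T$; an element of $\E\cdot\JJ$ lying in $E_{t_0}$ comes from a pair $(t,k)$ with $\sigma(t)=r(k)$ and $t\cdot k=t_0$, and Corollary~\ref{cor-ej} gives $\overline{E_t\cdot J_k}=E_{t_0}\cdot J_{s(k)}=E_{t_0}\cdot J_{\sigma(t_0)}$, using $s(k)=\sigma(t\cdot k)=\sigma(t_0)$. Taking $k$ to be the unit $\sigma(t_0)\in\ko$ (so $t=t_0$) realizes all of $E_{t_0}\cdot J_{\sigma(t_0)}$ inside the fibre, while the displayed identity shows every contributing piece is contained in it; hence $(\E\cdot\JJ)_{t_0}=E_{t_0}\cdot J_{\sigma(t_0)}$. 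Since $J_{\sigma(t_0)}$ is an ideal of the \cs-algebra $C_{\sigma(t_0)}$ and $E_{t_0}$ is a right Hilbert $C_{\sigma(t_0)}$-module, Remark~\ref{rem-cohen} (Cohen factorization) shows this fibre is already a \emph{closed} subspace of $E_{t_0}$.

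With the fibres in hand, the central step is to check that $\E\cdot\JJ$ is a Banach subbundle, which I would do through Proposition~\ref{prop-fd-sub-bun}: it suffices to produce, for each $t_0$, enough continuous sections of $\E$ taking values in the closed subspaces $E_t\cdot J_{\sigma(t)}$. The natural candidates are the maps $t\mapsto f(t)\cdot\psi(\sigma(t))$ with $f\in\Gamma(T;\E)$ and $\psi$ a continuous section of the restricted \cs-bundle $\JJ\restr{\ko}$. Such a map is a genuine section of $\E\cdot\JJ$, since it lies fibrewise in $E_t\cdot J_{\sigma(t)}$ and is continuous by continuity of $\sigma$ and of the right $\CC$-action. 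Given $e\cdot j\in E_{t_0}\cdot J_{\sigma(t_0)}$, I would choose $f$ with $f(t_0)=e$ (enough sections, Theorem~\ref{thm-enough-sections}) and $\psi$ with $\psi(\sigma(t_0))=j$ (enough sections for the \cs-bundle $\JJ\restr{\ko}$ over the locally compact space $\ko$); then $f(t_0)\cdot\psi(\sigma(t_0))=e\cdot j$. As every element of the fibre has this form by Cohen factorization, these sections exhaust the fibre, so the density hypothesis of Proposition~\ref{prop-fd-sub-bun} holds and $\E\cdot\JJ$ is a Banach subbundle. \textbf{This verification of enough sections is the main obstacle}; the remaining conditions are formal.

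It then remains to verify the module and fullness conditions of Definition~\ref{def-bc-submod}. For the right action, when $\sigma(t)=r(k)$ I would use $J_{\sigma(t)}\cdot C_k=J_{r(k)}\cdot C_k=J_k$ from Lemma~\ref{lem-ib-cohen} to rewrite $M_t\cdot C_k=E_t\cdot J_k$, whence Corollary~\ref{cor-ej} yields $\overline{M_t\cdot C_k}=E_{t\cdot k}\cdot J_{s(k)}=M_{t\cdot k}$, giving both containment and fullness at once. For the left action, $B_h\cdot M_t=(B_h\cdot E_t)\cdot J_{\sigma(t)}$ sits inside $E_{h\cdot t}\cdot J_{\sigma(t)}=M_{h\cdot t}$ because $B_h\cdot E_t\subseteq E_{h\cdot t}$ and $\sigma(h\cdot t)=\sigma(t)$; fullness follows since $B_h\cdot E_t$ is dense in $E_{h\cdot t}$ by Lemma~\ref{lem-6.2} and right multiplication by a fixed element of $J_{\sigma(t)}$ is norm-continuous, so $\overline{(B_h\cdot E_t)\cdot J_{\sigma(t)}}=\overline{E_{h\cdot t}\cdot J_{\sigma(t)}}=M_{h\cdot t}$, the last equality again by Cohen. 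Finally, the assertions for $\KK\cdot\E$ are proved identically, replacing Corollary~\ref{cor-ej} and the right-hand factorizations by their left-hand analogues and using the first isomorphism of Lemma~\ref{lem-6.2}.
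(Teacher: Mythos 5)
Your proposal is correct and follows essentially the same route as the paper's proof: identify the fibres $(\E\cdot\JJ)_{t}=E_{t}\cdot J_{\sigma(t)}$ via Corollary~\ref{cor-ej}, verify the Banach-subbundle property through Proposition~\ref{prop-fd-sub-bun} using sections of the form $t\mapsto f(t)\cdot\psi(\sigma(t))$, and check fullness with Lemma~\ref{lem-ib-cohen} and Lemma~\ref{lem-6.2}, handling $\KK\cdot\E$ by symmetry. Your write-up merely makes explicit some details the paper leaves implicit (the Cohen-factorization density argument and the use of Lazar's theorem for enough sections), which is harmless.
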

\begin{proof}
  We have 
  \begin{align}
    \label{eq:14}
    \E\cdot \JJ
    &= \bigcup_{\set{(t,k):\sigma(t)=r(k)}} E_{t}\cdot J_{k} \subset
      \bigcup_{\set{(t,k):\sigma(t)=r(k)}} \overline{E_{t}\cdot J_{k}}
    \\
    \intertext{which, by Corollary~\ref{cor-ej}, is}
    &= \bigcup_{\set{(t,k):\sigma(t)=r(k)}} E_{t\cdot k}\cdot J_{s(k)}
      =\bigcup_{t\in T} E_{t}\cdot J_{\sigma(t)}\subset \E\cdot \JJ.
  \end{align}
  Therefore $\E\cdot \JJ=\bigcup_{t\in T} E_{t}\cdot J_{\sigma(t)}$
  and $(\E\cdot \JJ)_{t}=E_{t} \cdot J_{\sigma(t)}$ as claimed.

  In particular, $\E\cdot \JJ$ is a bundle over $T$ with closed fibres
  $E_{t}\cdot J_{\sigma(t)}$.  But if $f\in \Gamma_{c}(T;\E)$ and
  $\phi\in \Gamma_{c}(\ko; \JJ)$, then $\phi\cdot f$ given by
  $\phi\cdot f(t)= f(t)\cdot \phi(\sigma(t))$ is a section in
  $\Gamma_{c}(T;\E\cdot \JJ)$.  Now it follows from
  Proposition~\ref{prop-fd-sub-bun} that $\E\cdot \JJ$ is a Banach
  subbundle of $\E$.

  We still need to see that $\E\cdot \JJ$ is a full
  $\B\sme \JJ$-submodule.  But if $s(h)=\rho(t)$, then
  \begin{align}
    \overline{B_{h}\cdot (\E\cdot \JJ)_{t}}
    &= \overline{B_{h}\cdot (E_{t}\cdot J_{\sigma(t)})} \\
    &= \overline{(B_{h}\cdot E_{t})\cdot J_{\sigma(t)}} \\
    &= E_{h\cdot t}\cdot J_{\sigma(t)}= (\E\cdot \JJ)_{h\cdot t}.
  \end{align}

  On the other hand if $\sigma(t)=r(k)$, then
  \begin{align}
    \overline{(\E\cdot \JJ)_{t}\cdot C_{k}}
    &= \overline{E_{t}\cdot (J_{r(k)}\cdot C_{k})} 
    \\
    \intertext{which, by Lemma~\ref{lem-ib-cohen}, is}
    &= \overline{E_{t}\cdot (C_{k}
      \cdot J_{s(k)})} \\
    &= E_{t\cdot k}\cdot J_{s(k)} = (\E\cdot\JJ)_{t\cdot k}.
  \end{align}
  Thus $\E\cdot \JJ$ is a full Banach $\B\sme\CC$-submodule as
  claimed.
  
  The corresponding statements for $\KK\cdot\E$ are proved similarly.
\end{proof}

Now suppose that $\M$ is a full Banach $\B\sme\CC$-submodule of $\E$.
Then $M_{t}$ is a closed $B_{\rho(t)}\sme C_{\sigma(t)}$-submodule of
$E_{t}$.  By the Rieffel Correspondence, $M_{t}$ is a
$L_{t}\sme R_{t}$-\ib\ for the ideals
$L_{t}=\overline{\lip \B<M_{t},E_{t}>}$ in $B_{\rho(t)}$ and
$R_{t}=\overline{\rip\CC<E_{t},M_{t}>}$ in $C_{\sigma(t)}$.
Furthermore, $L_{t}\cdot E_{t}=M_{t}=E_{t}\cdot R_{t}$.

\begin{lemma}\label{lem-ideals-inv}
  In the current set-up, the ideal $R_{t}$ depends only on $\sigma(t)$
  and the ideal $L_{t}$ depends only on $\rho(t)$.  Hereafter, we will
  denote them by $R_{\sigma(t)}$ and $L_{\rho(t)}$, respectively.
\end{lemma}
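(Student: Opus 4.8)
The plan is to prove the two assertions separately but by mirror‑image arguments; I describe the one for $R_t$ in detail. Because $T$ is an $(H,K)$‑equivalence, the moment map $\sigma$ descends to a bijection of the orbit space $H\backslash T$ onto $\ko$, so $\sigma(t)=\sigma(t')$ precisely when $t$ and $t'$ lie in the same $H$‑orbit; that is, $t'=h\cdot t$ for a unique $h\in H$ with $s(h)=\rho(t)$ and $r(h)=\rho(t')$. Hence ``$R_t$ depends only on $\sigma(t)$'' is exactly the statement that $R_{h\cdot t}=R_t$ whenever $h\cdot t$ is defined. Dually, $\rho(t)=\rho(t')$ iff $t'=t\cdot k$ for a unique $k\in K$, and I must show $L_{t\cdot k}=L_t$.

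First I would record the two adjointness identities that drive the computation. Using only the axioms (E2) of \defnref{def-equi} together with the nondegeneracy of the actions on the bimodules $E_t$, one derives
\[
  \rip\CC<b\cdot e,f>=\rip\CC<e,b^{*}\cdot f>
  \qquad\text{and}\qquad
  \lip\B<e\cdot c,f>=\lip\B<e,f\cdot c^{*}>
\]
for composable $b\in\B$, $c\in\CC$, and $e,f\in\E$. For instance, pairing the first on the left with an arbitrary $e_{0}\in E_{h\cdot t}$ and applying (E2)(d), (E2)(b), and (E2)(c) gives $e_{0}\cdot\rip\CC<b\cdot e,f>=\lip\B<e_{0},b\cdot e>\cdot f=(\lip\B<e_{0},e>b^{*})\cdot f=\lip\B<e_{0},e>\cdot(b^{*}\cdot f)=e_{0}\cdot\rip\CC<e,b^{*}\cdot f>$; since $E_{h\cdot t}$ is a full right Hilbert $C_{\sigma(t)}$‑module, this forces the two inner products to agree.

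For the main step, fix $h$ with $t'=h\cdot t$. By Lemma~\ref{lem-6.2} we have $E_{t'}=\overline{B_{h}\cdot E_{t}}$, and by fullness of $\M$ (\defnref{def-bc-submod}) we have $M_{t'}=\overline{B_{h}\cdot M_{t}}$. Substituting these into $R_{t'}=\overline{\rip\CC<E_{t'},M_{t'}>}$ and moving the closures outside by continuity of the inner product, I reduce to the closed span of $\rip\CC<b\cdot e,b'\cdot m>$ with $b,b'\in B_{h}$, $e\in E_{t}$, and $m\in M_{t}$. The first identity rewrites this as $\rip\CC<e,(b^{*}b')\cdot m>$. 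Now $b^{*}b'\in B_{s(h)}=B_{\rho(t)}$, and $\ospan\set{b^{*}b':b,b'\in B_{h}}=B_{\rho(t)}$ since the right inner product on the imprimitivity bimodule $B_{h}$ is full; moreover $M_{t}$ is a nondegenerate left $B_{\rho(t)}$‑module with $\overline{B_{\rho(t)}\cdot M_{t}}=M_{t}$, because it is an $L_{t}\sme R_{t}$‑\ib\ and so $\overline{L_{t}\cdot M_t}=M_t$ with $L_t\subset B_{\rho(t)}$ (Theorem~\ref{thm-rief-corr}). Therefore the closed span of the $(b^{*}b')\cdot m$ is exactly $M_{t}$, and $R_{t'}=\overline{\rip\CC<E_{t},M_{t}>}=R_{t}$.

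The argument for $L$ is the mirror image: it uses $E_{t\cdot k}=\overline{E_{t}\cdot C_{k}}$ and $M_{t\cdot k}=\overline{M_{t}\cdot C_{k}}$, the second adjointness identity, the fullness relation $\ospan\set{c'c^{*}:c,c'\in C_k}=C_{r(k)}=C_{\sigma(t)}$, and the nondegeneracy of $E_{t}$ as a right $C_{\sigma(t)}$‑module. I expect the only real friction to be bookkeeping—keeping straight which fibre each element occupies so that every product and inner product is actually defined, and justifying that the closures may be passed through the sesquilinear maps. The conceptual heart, and the only place the equivalence structure is genuinely used, is the pair of adjointness identities in the second paragraph.
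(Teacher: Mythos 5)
Your proof is correct, but it is packaged differently from the paper's. Both arguments run on the same ingredients---$E_{h\cdot t}=\overline{B_{h}\cdot E_{t}}$ (Lemma~\ref{lem-6.2}), $M_{h\cdot t}=\overline{B_{h}\cdot M_{t}}$ (fullness of $\M$), the (E2) axioms, density of $B_{h}^{*}B_{h}$ in $B_{\rho(t)}$, and nondegeneracy of $M_{t}$ over $B_{\rho(t)}$---but they cancel the $B_{h}$'s in different ways. The paper never computes $R_{h\cdot t}$ itself: it computes the submodule $E_{t}\cdot R_{h\cdot t}$, using (E2)(iv) to convert $E_{t}\cdot\rip\CC<B_{h}\cdot E_{t},B_{h}\cdot M_{t}>$ into $\lip\B<E_{t},E_{t}>\cdot B_{h}^{*}B_{h}\cdot M_{t}$, whose closed span is $M_{t}=E_{t}\cdot R_{t}$, and then concludes $R_{h\cdot t}=R_{t}$ from the injectivity of $J\mapsto E_{t}\cdot J$ in the classical Rieffel correspondence (Theorem~\ref{thm-rief-corr}) applied to the \ib\ $E_{t}$. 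You instead establish the adjointness identity $\rip\CC<b\cdot e,f>=\rip\CC<e,b^{*}\cdot f>$ (your derivation from (E2)(ii)--(iv) is sound) and compute $R_{h\cdot t}$ directly, never invoking Rieffel injectivity. Your route has a genuinely simpler closure-passing step: every pairing you take has both entries in a single fibre, where $\rip\CC<\cdot,\cdot>$ is an honest Hilbert-module inner product and Cauchy--Schwarz gives norm continuity, whereas the paper justifies the analogous step by a net argument in the bundle topology together with Lemma~\ref{lem-rel-top}. The cost sits in your cancellation step: the claim that $e_{0}\cdot c_{1}=e_{0}\cdot c_{2}$ for all $e_{0}\in E_{h\cdot t}$ forces $c_{1}=c_{2}$ is correct but stated tersely; it needs that $\rip\CC<E_{h\cdot t},E_{h\cdot t}>$ spans a dense subspace of $C_{\sigma(t)}$ (axiom (E3)), so that $C_{\sigma(t)}\cdot(c_{1}-c_{2})=0$, and then the \cs-identity in the fibres of $\CC$ gives $c_{1}=c_{2}$. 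Avoiding precisely this kind of cancellation is what the paper's detour through $E_{t}\cdot R_{h\cdot t}$ buys.
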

\begin{proof}
  If $\sigma(t')=\sigma(t)$, then $t'=h\cdot t$.  Since $E_{t}$ is a
  $B_{\rho(t)} \sme C_{\sigma(t)}$-\ib\ and since $R_{t}$ and
  $R_{h\cdot t}$ are both ideals in $C_{\sigma(t)}$, to see that
  $R_{t}=R_{h\cdot t}$ it will suffice---by the Rieffel
  correspondence---to see that
  $E_{t}\cdot R_{t}=E_{t}\cdot R_{h\cdot t}$.  Since $\M$ is full, we
  have
  \begin{equation}
    \label{eq:18}
    E_{t}\cdot R_{h\cdot t}
    = E_{t} \cdot \overline{\rip\CC<E_{h\cdot t}, M_{h\cdot t}>} 
    = \overline{E_{t}\cdot \rip \CC<\overline{B_{h}\cdot
        E_{t}},\overline{B_{h}\cdot M_{t}}>}. 
  \end{equation}
  Clearly,
  \begin{equation}
    \label{eq:32}
    \overline{E_{t}\cdot \rip\CC<B_{h}\cdot E_{t},B_{h}\cdot
      M_{t}>} \subset \overline{E_{t}\cdot \rip \CC<\overline{B_{h}\cdot
        E_{t}},\overline{B_{h}\cdot M_{t}}>}.
  \end{equation}
  On the other hand, consider
  \begin{equation}
    \label{eq:34}
    e\cdot \rip\CC<f,g>
  \end{equation}
  with $e\in E_{t}$, $f\in \overline{B_{h}\cdot E_{t}}$, and
  $g\in \overline{B_{h}\cdot M_{t}}$.  Then there are sequences
  $(f_{i})\subset B_{h}\cdot E_{t}$ and
  $(g_{i})\subset B_{h}\cdot M_{t}$ such that $f_{i}\to f$ and
  $g_{i}\to g$ in norm in $E_{h\cdot t}$ and hence in $\E$.  Therefore
  $e\cdot \rip\CC<f_{i},g_{i}>\to e\cdot \rip\CC<f,g>$ in $\E$.  Since
  the convergence takes place in $E_{t}$, the convergence is in norm.
  It follows that
  \begin{equation}
    \label{eq:35}
    E_{t}\cdot \rip\CC<\overline{B_{h}\cdot E_{t}},\overline{B_{h}\cdot
      M_{t}}> \subset \overline{E_{t}\cdot \rip\CC<B_{h}\cdot
      E_{t},B_{h}\cdot M_{t}>}.
  \end{equation}
  Therefore we have equality in \eqref{eq:32}, and
  \begin{align}
    E_{t}\cdot R_{h\cdot t}
    &= \overline{E_{t}\cdot \rip\CC<B_{h}\cdot E_{t},B_{h}\cdot
      M_{t}>}
      \label{eq:ques}\\
    \intertext{which, using
    (E2)(iv), is}
    & = \overline{\lip\B<E_{t},B_{h}\cdot E_{t}> \cdot B_{h} \cdot M_{t}} \\
    \intertext{which, using
    (E2)(ii) and (E2)(iii), is}
    &= \overline{\lip\B<E_{t},E_{t}> \cdot B_{h}^{*}B_{h}\cdot
      M_{t}} \\
    \intertext{which, since
    $\overline{B_{h}^{*}B_{h}}=B_{s(h)}=B_{\rho(t)}=\overline{
    \lip\B<E_{t},E_{t}>}$ and since
    $\overline{B_{s(h)}\cdot M_{t}}=M_{t}$, is}
    &=M_{t}=E_{t}\cdot R_{t}.
  \end{align}
  Thus $R_{t}=R_{h\cdot t}$ as required.

  The proof for $L_{t}$ is similar.
\end{proof}

If $\sigma(t)=r(k)$, then $\rip\CC<M_{t},E_{t\cdot k}>$ is the
subspace of $C_{k}$ spanned by inner products of elements in $M_{t}$
with elements of $E_{t\cdot k}$.  Then given $k\in K$, we let
$\bigoplus_{\sigma(t)=r(k)} \rip\CC<M_{t},E_{t\cdot k}>$ denote the
subspace of $C_{k}$ generated by the summands.  Then
\begin{align}
  \label{eq:17}
  \rip\CC<\M,\E>
  := \coprod_{k\in K} \bigoplus_{\sigma(t)=r(k)}
  \rip\CC<M_{t},E_{t\cdot k}> 
  = \coprod_{k\in K} \bigoplus_{\sigma(t)=r(k)}
  \rip\CC<M_{t},\overline{E_{t}\cdot C_{k}}>
\end{align}
where the closure takes place in the Banach space $E_{t\cdot k}$.
  
\begin{lemma}
  \label{lem-tech-closure} In the setting above, both
  $\rip\CC<M_{t},E_{t}\cdot C_{k}>$ and
  $\rip\CC<M_{t},\overline{E_{t}\cdot C_{k}}>$ are norm dense in
  $R_{r(k)}\cdot C_{k}$ where we have invoked
  Lemma~\ref{lem-ideals-inv} to realize that $R_{t}$ depends only on
  $r(k)=\rho(t)$.
\end{lemma}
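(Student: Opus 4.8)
The plan is to prove the stronger statement that the norm closures of both sets coincide with the closed submodule $R_{r(k)}\cdot C_k$ of $C_k$; the asserted density is then immediate. Throughout I write $R:=R_{r(k)}=R_{\sigma(t)}=R_t$, the identification being legitimate by Lemma~\ref{lem-ideals-inv} together with the standing hypothesis $\sigma(t)=r(k)$. Before the main argument I would record two facts. Since $R_t=\overline{\rip\CC<E_t,M_t>}$ is an ideal in the \cs-algebra $C_{\sigma(t)}$ it is $*$-closed, so (E2)(ii) gives $R=\overline{\rip\CC<M_t,E_t>}$ as well. And since $R$ is an ideal in $C_{r(k)}$ while $C_k$ is a $C_{r(k)}\sme C_{s(k)}$-\ib, the left-handed form of Remark~\ref{rem-cohen} shows that $R\cdot C_k$ is already norm closed.

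For the inclusion of both sets in $R\cdot C_k$, I would first reduce by sesquilinearity to generators $\rip\CC<m,e\cdot c>$ with $m\in M_t$, $e\in E_t$, $c\in C_k$, and apply (E2)(iii):
\[
  \rip\CC<m,e\cdot c>=\rip\CC<m,e>\cdot c\in R\cdot C_k,
\]
using that $\rip\CC<m,e>\in\rip\CC<M_t,E_t>\subset R$. This handles $\rip\CC<M_t,E_t\cdot C_k>$. For the version with the closure, I would fix $m\in M_t$, take $f\in\overline{E_t\cdot C_k}$ and $f_i\in E_t\cdot C_k$ with $f_i\to f$, and use norm continuity of the inner product in its second variable to conclude $\rip\CC<m,f>=\lim_i\rip\CC<m,f_i>\in R\cdot C_k$, the limit remaining in $R\cdot C_k$ precisely because the latter is closed.

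For the reverse inclusion it suffices to approximate each generator $r\cdot c$ of $R\cdot C_k$. Using $R=\overline{\rip\CC<M_t,E_t>}$ I would choose $r_i=\sum_j\rip\CC<m_{ij},e_{ij}>\to r$; then norm continuity of the left action gives $r_i\cdot c\to r\cdot c$, while (E2)(iii) rewrites
\[
  r_i\cdot c=\sum_j\rip\CC<m_{ij},e_{ij}\cdot c>\in\rip\CC<M_t,E_t\cdot C_k>\subset\rip\CC<M_t,\overline{E_t\cdot C_k}>.
\]
Hence $r\cdot c$ lies in the norm closure of each of the two sets, and since $R\cdot C_k$ is the closed span of such generators, both closures contain $R\cdot C_k$. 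Combined with the previous paragraph, this yields equality of both closures with $R\cdot C_k$.

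The computations are routine applications of (E2)(ii)--(iii). I expect the only genuinely delicate points to be the two continuity arguments that let one pass between a set and its closure---continuity of the $\CC$-valued inner product in its second variable, and continuity of the left action of $C_{r(k)}$ on $C_k$---together with the appeal to Cohen factorization (Remark~\ref{rem-cohen}) guaranteeing that $R\cdot C_k$ is already closed, which is what makes the two chains of inclusions pin the closures down exactly. The remaining care is purely bookkeeping of fibres: checking that $\rip\CC<M_t,E_t>$ lands in $C_{\sigma(t)}=C_{r(k)}$ and that the products above land in $C_k$.
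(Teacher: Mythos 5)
Your proof is correct and takes essentially the same approach as the paper's: both arguments rest on rewriting $\rip\CC<M_{t},E_{t}\cdot C_{k}>$ as $\rip\CC<M_{t},E_{t}>\cdot C_{k}$ via (E2)(iii), using density of $\rip\CC<M_{t},E_{t}>$ in $R_{r(k)}$ together with norm continuity of multiplication and the Cohen factorization (Remark~\ref{rem-cohen}) to identify the closure with $R_{r(k)}\cdot C_{k}$, and then a continuity argument for the inner product to absorb the closure $\overline{E_{t}\cdot C_{k}}$. The only point worth anchoring is that your asserted ``norm continuity of the inner product in its second variable'' is exactly what the paper obtains from the topological continuity of $\rip\CC<\cdot,\cdot>$ on $\E*_{\rho}\E$ combined with Lemma~\ref{lem-rel-top} (the relative topology on each fibre is the norm topology), so that step is sound as stated.
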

\begin{proof}
  Clearly,
  \begin{equation}
    \label{eq:19}
    \rip\CC<M_{t},E_{t}\cdot C_{k}>=
    \rip\CC<M_{t},E_{t}>\cdot C_{k}\subset R_{r(k)}\cdot C_{k}.
  \end{equation}
  Moreover,
  \begin{equation}
    \label{eq:22}
    \overline{\rip\CC<M_{t},E_{t}>\cdot C_{k}} =\overline{R_{r(k)}\cdot
      C_{k}} =R_{r(k)}\cdot C_{k}.
  \end{equation}
  This implies the first assertion.

  For the second, we just need to see that
  $\rip\CC<M_{t},\overline{E_{t}\cdot C_{k}}> \subset
  \overline{\rip\CC<M_{t},E_{t}>\cdot C_{k}}$.  To this end, suppose
  that $(c_{i})$ is a sequence in $E_{t}\cdot C_{k}$ converging to $c$
  in $E_{t\cdot k}$.  Then for any $m\in M_{t}$, the sequence
  $\bigl(\rip\CC<m,c_{i}>\bigr)$ converges to $\rip\CC<m,c>$ in $\CC$
  since $\rip\CC<\cdot,\cdot>$ is continuous on $\E*_{\rho}\E$.  Since
  the convergence takes place in $C_{k}$, the convergence is in norm
  by Lemma~\ref{lem-rel-top}.  Since each
  $\rip\CC<m,c_{i}>\in \rip\CC<M_{t},E_{t}>\cdot C_{k}$, the result
  follows.
\end{proof}

Using Lemma~\ref{lem-tech-closure} and \eqref{eq:17}, we have
\begin{equation}
  \label{eq:21}
  \rip\CC<\M,\E>\subset \coprod_{k\in K}  R_{r(k)}\cdot C_{k}.
\end{equation}
Moreover $\rip\CC<\M,\E>\cap C_{k}$ is norm dense in
$R_{r(k)}\cdot C_{k}$.

\begin{lemma}
  \label{lem-j-bb} In the current set-up,
  $\JJM:=\coprod_{k\in K} R_{r(k)}\cdot C_{k}$ is a Banach subbundle
  of~$\CC$.
\end{lemma}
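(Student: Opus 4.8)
The plan is to verify the hypotheses of Proposition~\ref{prop-fd-sub-bun} after passing to a convenient pullback. There is no evident way to produce enough \emph{global} continuous sections of $\CC$ taking values in $\JJM$ directly over $K$: we have no Haar system to integrate inner products over $\sigma$-fibres, and $\sigma$ need not admit continuous local sections. So the first routine reduction is to record that each fibre $R_{r(k)}\cdot C_{k}$ is a closed subspace of $C_{k}$. Since $R_{r(k)}$ is an ideal in $C_{r(k)}$ and $C_{k}$ is a $C_{r(k)}\sme C_{s(k)}$-\ib, Theorem~\ref{thm-rief-corr} together with Remark~\ref{rem-cohen} gives $R_{r(k)}\cdot C_{k}=\overline{R_{r(k)}\cdot C_{k}}$. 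By Remark~\ref{rem-just-open} the axioms (B1)--(B4) then hold automatically for $\JJM$ in the relative topology, so the only substantive point is that the restricted projection $\bar p\colon\JJM\to K$ is \emph{open}.

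To obtain sections I would work over the fibre product $T*K=\set{(t,k):\sigma(t)=r(k)}$ with projection $\pi\colon T*K\to K$; because $\sigma$ is open, the base change $\pi$ is an open surjection. Pull $\CC$ back to a Banach bundle $\pi^{*}\CC\to T*K$, and let $\pi^{*}\JJM\subset\pi^{*}\CC$ be the sub-collection whose fibre over $(t,k)$ is $R_{r(k)}\cdot C_{k}$. For $m\in\Gamma_{c}(T;\M)$ and $e\in\Gamma_{c}(T;\E)$ the assignment $(t,k)\mapsto\rip\CC<m(t),e(t\cdot k)>$ is a continuous section of $\pi^{*}\CC$: the action and the inner product are continuous, and $\rho(t\cdot k)=\rho(t)$ so the pair lies in $\E*_{\rho}\E$. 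It takes values in $\pi^{*}\JJM$ by Lemma~\ref{lem-tech-closure}. Finite sums of such sections realize, at the point $(t,k)$, the full span $\rip\CC<M_{t},E_{t\cdot k}>$; by Lazar's Theorem~\ref{thm-enough-sections} the values $m(t)$ and $e(t\cdot k)$ exhaust $M_{t}$ and $E_{t\cdot k}$, and by Lemma~\ref{lem-tech-closure} (using $\overline{E_{t}\cdot C_{k}}=E_{t\cdot k}$ from Lemma~\ref{lem-6.2}) this span is dense in $R_{r(k)}\cdot C_{k}$. Proposition~\ref{prop-fd-sub-bun} then shows $\pi^{*}\JJM$ is a Banach subbundle of $\pi^{*}\CC$; in particular its bundle projection $\mathrm{proj}\colon\pi^{*}\JJM\to T*K$ is open.

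The main obstacle is the final \emph{descent}: deducing openness of $\bar p\colon\JJM\to K$ from the openness just obtained over $T*K$. I would route this through the canonical map $\tilde\pi\colon\pi^{*}\JJM\to\JJM$, $((t,k),c)\mapsto c$, which is the base change of $\pi$ along $\bar p$ restricted to $\pi^{*}\JJM$; it is open and surjective onto $\JJM$ (surjectivity uses that $\sigma$ hits $r(k)$ for every $k$). One has the identity $\bar p\circ\tilde\pi=\pi\circ\mathrm{proj}$, whose right-hand side is a composite of open maps and hence open. Thus for open $V\subset\JJM$ the set $\tilde\pi^{-1}(V)$ is open and $\bar p(V)=(\pi\circ\mathrm{proj})(\tilde\pi^{-1}(V))$ is open, forcing $\bar p$ open. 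Combined with Remark~\ref{rem-just-open} this finishes the proof. The genuinely delicate points are verifying that $\tilde\pi$ is open and surjective onto $\JJM$ and that $\pi^{*}\CC$ carries the expected pullback topology; the remaining steps are the standard pullback-and-section machinery.
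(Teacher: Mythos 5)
Your proposal is correct, but it is organized quite differently from the paper's argument, so a comparison is worthwhile. The paper proves openness of $\JJM\to K$ head-on: given a relatively open $U\subset\JJM$ and a sequence $k_{i}\to k\in p(U)$, it approximates an element $c'c\in U\cap(\JJM)_{k}$ by a finite sum of inner products of section values, lifts $r(k_{i})\to r(k)$ to $t_{i}\to t$ in $T$ using openness of $\sigma$, and exhibits elements $d(t_{i})f(k_{i})\in(\JJM)_{k_{i}}$ converging to $c'c$, so that $k_{i}$ is eventually in $p(U)$ --- in effect a sequential rerun (legitimate, since Fell bundles here sit over second countable groupoids) of the proof of Proposition~\ref{prop-fd-sub-bun}, adapted to the extra lifting step. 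You instead pull everything back to the fibre product $T*K$, where sections of the form $(t,k)\mapsto\rip\CC<m(t),e(t\cdot k)>$ are available, invoke Proposition~\ref{prop-fd-sub-bun} there, and then descend openness along the open surjection $\pi\colon T*K\to K$. The essential ingredients are identical in the two arguments --- openness and surjectivity of $\sigma$, enough sections for $\E$, $\M$, and $\CC$ via Theorem~\ref{thm-enough-sections}, closedness of the fibres $R_{r(k)}\cdot C_{k}$ via Remark~\ref{rem-cohen}, and the density statement of Lemma~\ref{lem-tech-closure} combined with $\overline{E_{t}\cdot C_{k}}=E_{t\cdot k}$ from Lemma~\ref{lem-6.2} --- but your packaging is more structural: it reuses Proposition~\ref{prop-fd-sub-bun} as a black box and isolates a purely topological descent step, at the cost of needing the fact (standard, though nowhere stated in this paper) that the pullback $\pi^{*}\CC$, in the relative topology from $(T*K)\times\CC$, is again a Banach bundle with open projection. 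Two small remarks. First, your descent argument needs only that $\tilde\pi$ is continuous and surjective, together with openness of $\mathrm{proj}$ and $\pi$: since $\bar p\circ\tilde\pi=\pi\circ\mathrm{proj}$ and $\tilde\pi\bigl(\tilde\pi^{-1}(V)\bigr)=V$, one gets $\bar p(V)=(\pi\circ\mathrm{proj})\bigl(\tilde\pi^{-1}(V)\bigr)$ open; so the openness of $\tilde\pi$ that you flag as delicate is never actually used. Second, surjectivity of $\tilde\pi$ is immediate from surjectivity of the moment map $\sigma\colon T\to\ko$, which holds by definition of a groupoid equivalence, so that point is not delicate either.
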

\begin{proof}
  As in the proof of Proposition~\ref{prop-fd-sub-bun}, the issue is
  to see that $p\colon \JJM\to K$ is open.  Let $U$ be a nonempty
  (relatively) open set in $\JJM$.  Given $k\in p(U)$ it will suffice
  to show that given a sequence $(k_{i})$ converging to $k$ in $K$,
  $(k_{i})$ is eventually in $U$.  If this fails, then after passing
  to a subsequence and relabeling, we can assume $k_{i}\notin U$ for
  all $i$.

  Since $(\JJM)_{k}$ is $R_{r(k)}\cdot C_{k}$, we can find
  $c\in C_{r}$ and $c'\in R_{r(k)}$ such that $c'c\in U$ and
  $p(c'c)=k$.  Let $t\in T$ be such that $\sigma(t)=r(k)$.  Since
  $R_{r(k)}$ is the closure of $\rip\CC<E_{t},M_{t}>$ in $C_{r(k)}$,
  there is a sequence $(c_{i}')\subset \rip\CC<E_{t},M_{t}>$
  converging to $c'$ in norm.  But then $c_{i}'c\to c'c$ in norm.  But
  then $(c_{i}'c)$ is eventually in $U$.  Therefore, we may as well
  assume that $c' = \sum_{j=1}^{n} \rip\CC<e_{j},m_{j}>$ with each
  $e_{j}\in E_{t}$ and each $m_{j}\in M_{t}$.

  Since Banach bundles have enough sections we can find
  $f\in \Gamma(K;\CC)$, $g_{j}\in \Gamma(T;\E)$, and
  $h_{j}\in \Gamma(T;\M)$ such that $f(k)=c$, $g_{j}(t)=e_{j}$, and
  $h_{j}(t)=m_{j}$.

  Since $r(k_{i})\to r(k)=\sigma(t)$, and since $\sigma$ is open, we
  can pass to a subsequence, relabel, and assume that there are
  $t_{i}\in T$ such that $t_{i}\to t$ and $\sigma(t_{i})=r(k_{i})$.
  Then
  \begin{equation}
    \label{eq:1new}
    d(t_{i}):=\sum_{j=1}^{n}\rip\CC<g(t_{i}),h(t_{i})>\in R_{\sigma(t_{i})}
    \quad\text{and} \quad d(t_{i})f(k_{i}) \in R_{\sigma(t_{i})}\cdot C_{k_{i}}
    = (\JJM)_{k_{i}}.  
  \end{equation}
  Furthermore, $d(t_{i})f(k_{i})\to c'c$ in $\JJM$.  Hence
  $\bigl(d(t_{i})f(k_{i})\bigr)$ is eventually in $U$.  Since $p$ is
  continuous, $p\bigl(d(t_{i})f(k_{i})\bigr)=k_{i}$ is eventually in
  $p(U)$ which contradicts our assumptions on $(k_{i})$ and completes
  the proof.
\end{proof}

\begin{prop}
  \label{prop-j-ideal} In the current set-up,
  $\JJM:=\coprod_{k\in K} R_{r(k)}\cdot C_{k}$ is an ideal in $\CC$.
\end{prop}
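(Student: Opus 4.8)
The plan is to show that $\JJM$ satisfies the multiplicative absorption of \defnref{def-weak-ideal}, i.e.\ that it is a \emph{weak ideal} of $\CC$, and then to invoke Proposition~\ref{prop-weak-ideal} to upgrade this to the full ideal structure. Since Lemma~\ref{lem-j-bb} already gives that $\JJM$ is a Banach subbundle with fibres $(\JJM)_{k}=R_{r(k)}\cdot C_{k}$, and each $R_{r(k)}$ is an ideal in the \cs-algebra $C_{r(k)}$, the only thing left to check is that $ab\in\JJM$ whenever $(a,b)\in\CC^{(2)}$ and one of $a,b$ lies in $\JJM$.

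The heart of the argument is the invariance identity
\[
  R_{r(k)}\cdot C_{k}=C_{k}\cdot R_{s(k)}\qquad\text{for every }k\in K,
\]
which says that $R_{r(k)}$ and $R_{s(k)}$ correspond to one another under the Rieffel correspondence for the $C_{r(k)}\sme C_{s(k)}$-\ib\ $C_{k}$. To prove it I would fix $t\in T$ with $\sigma(t)=r(k)$, so that $\sigma(t\cdot k)=s(k)$ and $R_{s(k)}=\overline{\rip\CC<E_{t\cdot k},M_{t\cdot k}>}$ by Lemma~\ref{lem-ideals-inv}. Using fullness of $\M$, $M_{t\cdot k}=\overline{M_{t}\cdot C_{k}}$, and Lemma~\ref{lem-6.2}, $E_{t\cdot k}=\overline{E_{t}\cdot C_{k}}$; by continuity of $\rip\CC<\cdot,\cdot>$ together with Lemma~\ref{lem-rel-top} (so that the relevant convergence in $C_{s(k)}$ is in norm), $R_{s(k)}$ is the closed span of the elements $\rip\CC<e\cdot c,m\cdot c'>$ with $e\in E_{t}$, $m\in M_{t}$, and $c,c'\in C_{k}$. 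Writing $x=\rip\CC<e,m>\in C_{r(k)}$, the interior-tensor-product inner-product formula underlying the isomorphism $E_{t}\otimes_{C_{r(k)}}C_{k}\cong E_{t\cdot k}$ of Lemma~\ref{lem-6.2} gives $\rip\CC<e\cdot c,m\cdot c'>=\rip\CC<c,x\cdot c'>$. As $\rip\CC<e,m>$ ranges over a dense subset of $R_{r(k)}=\overline{\rip\CC<E_{t},M_{t}>}$, this yields $R_{s(k)}=\overline{\rip\CC<C_{k},R_{r(k)}\cdot C_{k}>}$. Now apply Theorem~\ref{thm-rief-corr} to the \ib\ $\X=C_{k}$ with closed submodule $\Y=R_{r(k)}\cdot C_{k}$: the associated ideal of $C_{s(k)}$ is $\overline{\rip\CC<\X,\Y>}=R_{s(k)}$, and \eqref{eq:5} then gives $\Y=\X\cdot R_{s(k)}$, that is, $R_{r(k)}\cdot C_{k}=C_{k}\cdot R_{s(k)}$, with both sides already norm-closed.

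With the invariance in hand the absorption is routine. Suppose $a\in C_{k}$ and $b\in C_{\ell}$ are composable, so $r(k\ell)=r(k)$, $s(k\ell)=s(\ell)$, and $ab\in C_{k\ell}$. If $a\in(\JJM)_{k}=R_{r(k)}\cdot C_{k}$, then Cohen factorization (Remark~\ref{rem-cohen}) lets me write $a=c'\cdot c$ with $c'\in R_{r(k)}$ and $c\in C_{k}$, whence $ab=c'(cb)\in R_{r(k)}\cdot C_{k\ell}=R_{r(k\ell)}\cdot C_{k\ell}=(\JJM)_{k\ell}$. If instead $b\in(\JJM)_{\ell}=R_{r(\ell)}\cdot C_{\ell}=C_{\ell}\cdot R_{s(\ell)}$ (using the invariance), I write $b=c\cdot c''$ with $c\in C_{\ell}$ and $c''\in R_{s(\ell)}=R_{s(k\ell)}$, whence $ab=(ac)c''\in C_{k\ell}\cdot R_{s(k\ell)}=R_{r(k\ell)}\cdot C_{k\ell}=(\JJM)_{k\ell}$, again by the invariance. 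Thus $\JJM$ is a weak ideal, and Proposition~\ref{prop-weak-ideal} finishes the proof.

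The main obstacle is the invariance identity; everything else is either recorded in earlier lemmas or is a one-line factorization. The delicate points there are the two passages to closures---replacing $M_{t\cdot k}$ and $E_{t\cdot k}$ by the unclosed products $M_{t}\cdot C_{k}$ and $E_{t}\cdot C_{k}$ inside the inner product, which must be justified by continuity of $\rip\CC<\cdot,\cdot>$ and the fact (Lemma~\ref{lem-rel-top}) that norm convergence governs the relevant fibres---and the correct bookkeeping of the interior-tensor-product inner-product formula supplied by Lemma~\ref{lem-6.2}.
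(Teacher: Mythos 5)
Your proof is correct, and it shares the paper's outer reduction---Lemma~\ref{lem-j-bb} gives the Banach subbundle structure, and Proposition~\ref{prop-weak-ideal} upgrades a weak ideal to an ideal---but the absorption step itself is done by a genuinely different argument. The paper does not prove your invariance identity at this point: given $c\in C_{l}$ and $m\in (\JJM)_{k}$, it approximates $m$ in norm by sums from $\rip\CC<M_{t},E_{t}\cdot C_{k}>$ (Lemma~\ref{lem-tech-closure}), moves $c$ inside the inner product via $c\,\rip\CC<m',e'>=\rip\CC<m'\cdot c^{*},e'>$ together with the submodule property $M_{t}\cdot c^{*}\subset M_{t\cdot l^{-1}}$, and concludes from \eqref{eq:17}, \eqref{eq:21}, and norm-closedness of the fibres. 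You instead first prove $R_{r(k)}\cdot C_{k}=C_{k}\cdot R_{s(k)}$ by computing $R_{s(k)}=\overline{\rip\CC<C_{k},R_{r(k)}\cdot C_{k}>}$ and then applying Theorem~\ref{thm-rief-corr} to the \ib\ $C_{k}$ with closed submodule $R_{r(k)}\cdot C_{k}$, after which absorption reduces to Cohen factorization (Remark~\ref{rem-cohen}). This is not circular: you obtain the invariance from the fibrewise Rieffel correspondence, not from Lemma~\ref{lem-ib-cohen} applied to $\JJM$, which would presuppose the conclusion. What your route buys is that the invariance identity---precisely the analogue of condition \eqref{eq:2} in Proposition~\ref{prop-ionwil-bi}, and a fact the paper only extracts later, inside the proof of Proposition~\ref{prop-bijection}, after the ideal property is known---is established up front; what the paper's route buys is brevity, since it leans entirely on the already-proved Lemma~\ref{lem-tech-closure}. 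One cosmetic remark: your appeal to the interior-tensor-product formula underlying Lemma~\ref{lem-6.2} is heavier machinery than needed, since the identity $\rip\CC<e\cdot c,m\cdot c'>=c^{*}\rip\CC<e,m>c'$ follows in two lines from (E2)(ii) and (E2)(iii).
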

\begin{proof}
  In view of Proposition~\ref{prop-weak-ideal}, we just have to show
  that $\JJM$ is a weak ideal.  For convenience, let
  $J_{k}=R_{r(k)}\cdot C_{k}$.
  
  Suppose that $(c,m)\in C_{l}\times J_{k}$ with $s(l)=r(k)$.  By
  Lemma~\ref{lem-tech-closure}, there is a sequence $(m_{i})$ in
  $\rip\CC<M_{t},E_{t}\cdot C_{k}>$ converging to $m$ in norm (and
  hence in $\CC$).  But then $c m_{i}\to c m$ in $\CC\cap C_{lk}$ in
  $\CC$ and hence in norm.  Since
  $M_{t}c^{*}\subset M_{t\cdot l^{-1}}$,
  $c m_{i}\in \rip\CC<M_{t\cdot l^{-1}},E_{t\cdot k}> \subset J_{lk}$
  (using \eqref{eq:17} and \eqref{eq:21}).  Since $J_{kl}$ is closed
  in norm in $C_{kl}$, it follows that $cm\in \JJM$.

  A similar argument shows that $mc\in \JJM$ if
  $(m,c)\in J_{k}\times C_{l}$ with $s(k)=r(l)$.  Hence $\JJM$ is a
  weak ideal as claimed.
\end{proof}

\begin{prop}
  \label{prop-bijection} We retain the current set-up.  Let $\JJ$ be
  an ideal in $\CC$.  Then $\M=\E\cdot \JJ$ is a Banach
  $\B\sme\JJ$-submodule of $\E$ and $\JJM=\JJ$.  Hence
  $\JJ\mapsto \E\cdot \JJ$ is a lattice isomorphism of the collection
  of ideals in $\CC$ to the collection of closed $\B\sme\CC$-submodules
  of $\E$.
\end{prop}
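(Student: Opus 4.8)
The first assertion is exactly Proposition~\ref{prop-e-j}, which shows that $\M=\E\cdot\JJ$ is a full Banach $\B\sme\CC$-submodule of $\E$ with fibres $M_{t}=E_{t}\cdot J_{\sigma(t)}$. In particular $\M$ is a closed $\B\sme\CC$-submodule, so the fibrewise Rieffel correspondence of Theorem~\ref{thm-rief-corr} applies to each $M_{t}\subset E_{t}$, and Lemma~\ref{lem-ideals-inv} lets us speak of the ideal $R_{\sigma(t)}=\overline{\rip\CC<E_{t},M_{t}>}$ in $C_{\sigma(t)}$. Computing $\JJM$ thus reduces to identifying $R_{\sigma(t)}$.

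The heart of the argument is this identification. Using $M_{t}=E_{t}\cdot J_{\sigma(t)}$ and (E2)(iii), and then that $E_{t}$ is a full right Hilbert $C_{\sigma(t)}$-module so that $\overline{\rip\CC<E_{t},E_{t}>}=C_{\sigma(t)}$, I would show
\begin{equation}
  R_{\sigma(t)}=\overline{\rip\CC<E_{t},E_{t}\cdot J_{\sigma(t)}>}
  =\overline{\rip\CC<E_{t},E_{t}>\cdot J_{\sigma(t)}}
  =\overline{C_{\sigma(t)}\cdot J_{\sigma(t)}}=J_{\sigma(t)},
\end{equation}
the final equality because $J_{\sigma(t)}$ is an ideal in $C_{\sigma(t)}$ (Remark~\ref{rem-cohen}). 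Feeding this into the definition of $\JJM$ and applying Lemma~\ref{lem-ib-cohen} to the bundle $\CC$ (so $J_{r(k)}\cdot C_{k}=J_{k}$) gives
\begin{equation}
  \JJM=\coprod_{k\in K}R_{r(k)}\cdot C_{k}=\coprod_{k\in K}J_{r(k)}\cdot C_{k}=\coprod_{k\in K}J_{k}=\JJ,
\end{equation}
which is the asserted equality $\JJM=\JJ$.

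For the lattice isomorphism the plan is to present $\JJ\mapsto\E\cdot\JJ$ and $\M\mapsto\JJM$ as mutually inverse, order-preserving maps. Both are manifestly order preserving from the fibre formulas $(\E\cdot\JJ)_{t}=E_{t}\cdot J_{\sigma(t)}$ and $(\JJM)_{k}=R_{r(k)}\cdot C_{k}$. The equality $\JJM=\JJ$ just established shows one composite is the identity, which already gives injectivity of $\JJ\mapsto\E\cdot\JJ$. Surjectivity amounts to the reverse composite $\E\cdot\JJM=\M$, which I would verify fibrewise: since $\sigma(t)$ is a unit and $R_{\sigma(t)}$ is an ideal, $(\JJM)_{\sigma(t)}=R_{\sigma(t)}$, whence $(\E\cdot\JJM)_{t}=E_{t}\cdot R_{\sigma(t)}=M_{t}$ by the imprimitivity-bimodule identity $M_{t}=E_{t}\cdot R_{\sigma(t)}$; equality of fibres inside $\E$ then forces $\E\cdot\JJM=\M$.

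The step I expect to be the genuine obstacle is this recovery $\E\cdot\JJM=\M$, since it presumes that an arbitrary closed $\B\sme\CC$-submodule is already \emph{full}---only then are $R_{\sigma(t)}$ well defined and the identity $M_{t}=E_{t}\cdot R_{\sigma(t)}$ available. I would settle automatic fullness by a groupoid-equivariance argument: applying the submodule condition to $h^{-1}$ gives $B_{h^{-1}}\cdot M_{h\cdot t}\subset M_{t}$, so using $B_{h^{-1}}=B_{h}^{*}$ and saturation $\overline{B_{h}B_{h}^{*}}=B_{r(h)}$ one obtains
\begin{equation}
  \overline{B_{h}\cdot M_{t}}\supset\overline{B_{h}B_{h}^{*}\cdot M_{h\cdot t}}=\overline{B_{r(h)}\cdot M_{h\cdot t}}=M_{h\cdot t},
\end{equation}
while the reverse inclusion is the submodule condition itself; the right-hand fullness is symmetric. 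Granting this, both composites are identities and the order-preserving bijection $\JJ\mapsto\E\cdot\JJ$ is the desired lattice isomorphism. The inner-product manipulations yielding $\JJM=\JJ$ and the order-preservation are, by contrast, routine.
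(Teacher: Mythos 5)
Your proposal is correct, and its core computations are exactly the paper's: the identification $R_{\sigma(t)}=\overline{\rip\CC<E_{t},E_{t}\cdot J_{\sigma(t)}>}=J_{\sigma(t)}$, the resulting equality $\JJM=\coprod_{k}J_{r(k)}\cdot C_{k}=\JJ$ via Equation~\eqref{eq:9a} of Lemma~\ref{lem-ib-cohen}, and the fibrewise recovery $(\E\cdot\JJM)_{t}=E_{t}\cdot R_{\sigma(t)}=M_{t}$ are the same steps the paper takes (the paper phrases the last one by computing $R'_{\sigma(t)}=\overline{\rip\CC<E_{t},E_{t}>\cdot R_{\sigma(t)}}=R_{\sigma(t)}$ and then invoking $M_{t}=E_{t}\cdot R_{\sigma(t)}$, which is what you do directly). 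Where you genuinely depart from the paper is the automatic-fullness argument. The paper never proves this: its proof of the reverse composite begins by \emph{assuming} $\M$ is full, and Theorem~\ref{thm-rieffel-corr-fell} is stated for \emph{full} Banach $\B\sme\CC$-submodules, which leaves a small mismatch with the proposition's own wording ``closed $\B\sme\CC$-submodules.'' Your equivariance argument closes that gap and is valid: the submodule conditions at units make each $M_{h\cdot t}$ a closed $B_{r(h)}\sme C_{\sigma(h\cdot t)}$-submodule of $E_{h\cdot t}$, so the fibrewise Rieffel correspondence gives $M_{h\cdot t}=L\cdot E_{h\cdot t}$ for an ideal $L$ in $B_{r(h)}$ and hence nondegeneracy $\overline{B_{r(h)}\cdot M_{h\cdot t}}=M_{h\cdot t}$; combining this with saturation $\overline{B_{h}B_{h}^{*}}=B_{r(h)}$, the norm bound $\|b\cdot e\|\le\|b\|\|e\|$, and the submodule condition applied to $h^{-1}$ (so $B_{h}^{*}\cdot M_{h\cdot t}\subset M_{t}$) yields $M_{h\cdot t}\subset\overline{B_{h}\cdot M_{t}}$, and the reverse inclusion is the submodule condition itself. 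What this buys is a cleaner theorem: the hypothesis ``full'' in Lemma~\ref{lem-ideals-inv}, Proposition~\ref{prop-j-ideal}, and Theorem~\ref{thm-rieffel-corr-fell} is automatic, so the correspondence really is with \emph{all} closed Banach $\B\sme\CC$-submodules, exactly as the proposition asserts; the paper instead sidesteps the issue by carrying fullness as a standing hypothesis. The only cosmetic quibble is your citation of Remark~\ref{rem-cohen} for $\overline{C_{\sigma(t)}\cdot J_{\sigma(t)}}=J_{\sigma(t)}$, which is really just the approximate-identity/Cohen fact for the ideal $J_{\sigma(t)}$, and your implicit reliance on Proposition~\ref{prop-j-ideal} (so that $\JJM$ is an ideal and $\E\cdot\JJM$ makes sense via Proposition~\ref{prop-e-j}), which you should cite explicitly.
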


\begin{proof}
  By Proposition~\ref{prop-e-j}, $\M:=\E\cdot\JJ$ is a full Banach
  $\B\sme\JJ$-submodule, and
  $M_{t}=(\E\cdot \JJ)_{t}=E_{t}\cdot J_{\sigma(t)}$.  Thus, applying
  the Rieffel correspondence to $E_{t}$,
  \begin{equation}
    \label{eq:23}
    R_{\sigma(t)}=\overline{\text{span}}{\rip\CC<E_{t},M_{t}>}= 
    \overline{\text{span}}{\rip\CC<E_{t},E_{t}\cdot
    J_{\sigma(t)}>} =J_{\sigma(t)}.
  \end{equation}
  Thus in \eqref{eq:21}, $R_{r(k)}=J_{r(k)}$.  Therefore,
  \begin{equation}
    \label{eq:24}
    \JJM=\coprod_{k\in K} R_{r(k)}\cdot C_{k}=\coprod_{k\in K}
    J_{r(k)}\cdot C_{k}=\JJ,
  \end{equation}
  where the last equality comes from Equation~\eqref{eq:9a} of
  Lemma~\ref{lem-ib-cohen}.

  Now suppose that $\M$ is a full closed $\B\sme\CC$-submodule.  Then
  Proposition~\ref{prop-j-ideal} implies that $\JJM$ is an ideal in
  $\CC$ with $(\JJM)_{k}=R_{r(k)}\cdot C_{k}$.  Let
  $\M':=\E\cdot \JJM$.  In particular, if $u\in\ko$,
  $(\JJM)_{u}=R_{u}\cdot C_{u}=R_{u}$.  Thus
  $(\JJM)_{k}=R_{r(k)}\cdot C_{k} = C_{k}\cdot R_{s(k)}$ by
  Lemma~\ref{lem-ib-cohen}.  Then
  $\M'_{t}=(\E\cdot \JJM)_{t} =E_{t}\cdot R_{\sigma(t)}$.  This means
  \begin{align}
    \label{eq:2new}
    R'_{\sigma(t)}
    &=\overline{\rip\CC<E_{t},\M'_{t}>} \\
    &= \overline{\rip\CC<E_{t},E_{t}>\cdot R_{\sigma(t)}} \\
    &= \overline{\rip\CC<E_{t},E_{t}>\cdot R_{\sigma(t)}} \\
    &= \overline{C_{\sigma(t)}\cdot R_{\sigma(t)}}=R_{\sigma(t)}.
  \end{align}
  Therefore
  $\ M'_{t}=E_{t}\cdot R_{\sigma(t)}=E_{t}\cdot R'_{\sigma(t)}
  =M_{t}$.  Therefore $\E\cdot \JJM=\M$.
\end{proof}

By symmetry, we have a lattice isomorphism $\KK\mapsto \KK\cdot \E$
between the ideals in $\B$ and the full Banach $\B\sme\CC$-submodules
of $\E$.  Then we have the following Rieffel Correspondence for Fell
bundle equivalence.

\begin{thm}
  \label{thm-rieffel-corr-fell} Suppose that $\qe\colon \E\to T$ is a
  Fell-bundle equivalence between $\pb\colon \B\to H$ and
  $\pc\colon \CC\to K$.  Then there are lattice isomorphisms among the
  ideals of $\B$, the \emph{full} Banach $\B\sme\CC$-submodules of
  $\E$, and the ideals of \/$\CC$.  The correspondences are given as
  follows.
  \begin{enumerate}
  \item If $\JJ$ is an ideal in $\CC$, then the corresponding full
    Banach $\B\sme\CC$-submodule is
    \begin{equation}
      \label{eq:26}
      \E\cdot\JJ=\bigcup_{t\in T}E_{t}\cdot J_{\sigma(t)}.
    \end{equation}
  \item If $\M$ is a full Banach $\B\sme\CC$-submodule, then for each
    $t\in T$, $M_{t}$ is a $L_{\rho(t)}\sme R_{\sigma(t)}$-\ib\ for
    ideals $L_{\rho(t)} = \overline{\lip\B<M_{t},E_{t}>}$ in
    $B_{\rho(t)}$ and $R_{\sigma(t)}= \overline{\rip\CC<E_{t},M_{t}>}$
    in $\CC_{\sigma(t)}$.  Then the corresponding ideals $\JJM$ in
    $\CC$ and $\KKM$ in $\B$ are given by
    \begin{align}
      \label{eq:27}
      \JJM
      &=\bigcup_{k\in K} R_{r(k)}\cdot C_{k} \quad\text{and} \quad
        \KKM= \bigcup_{h\in H} L_{r(h)} \cdot B_{h}.
    \end{align}
  \item If $\KK$ is an ideal in $\B$, then the corresponding
    $\B\sme\CC$-submodule is
    \begin{equation}
      \label{eq:28}
      \KK\cdot \E= \bigcup_{t\in T} K_{\rho(t)}\cdot E_{t}.
    \end{equation}
  \end{enumerate}
\end{thm}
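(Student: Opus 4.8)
The plan is to recognize that this theorem is a synthesis of the results already established in this section, so the proof is mostly a matter of recording which earlier statement supplies each claim and then upgrading the resulting bijections to lattice isomorphisms. For part~(a), Proposition~\ref{prop-e-j} shows that for an ideal $\JJ$ in $\CC$ the set $\E\cdot\JJ$ is a full Banach $\B\sme\CC$-submodule with fibres $E_{t}\cdot J_{\sigma(t)}$, and Proposition~\ref{prop-bijection} shows that $\JJ\mapsto\E\cdot\JJ$ is a bijection from the ideals of $\CC$ onto the full Banach $\B\sme\CC$-submodules of $\E$, with inverse $\M\mapsto\JJM$. Part~(c) is the statement symmetric to part~(a): by the symmetry of Definition~\ref{def-bc-submod} and of the whole construction, $\KK\mapsto\KK\cdot\E$ is a bijection from the ideals of $\B$ onto the full Banach $\B\sme\CC$-submodules of $\E$, with inverse $\M\mapsto\KKM$. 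The explicit fibre descriptions in part~(b) are exactly the definition of $\JJM$ (Lemma~\ref{lem-j-bb}, Proposition~\ref{prop-j-ideal}) together with its mirror image $\KKM=\bigcup_{h\in H}L_{r(h)}\cdot B_{h}$.

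What remains is to confirm that these bijections are lattice isomorphisms. Since a bijection between lattices is a lattice isomorphism as soon as both it and its inverse preserve inclusions, I would simply verify monotonicity in each direction. If $\JJ_{1}\subset\JJ_{2}$ are ideals in $\CC$, then the fibre formula $(\E\cdot\JJ_{i})_{t}=E_{t}\cdot J^{i}_{\sigma(t)}$ gives $\E\cdot\JJ_{1}\subset\E\cdot\JJ_{2}$; conversely, if $\M_{1}\subset\M_{2}$ are full submodules, then $R^{1}_{\sigma(t)}=\overline{\rip\CC<E_{t},M^{1}_{t}>}\subset\overline{\rip\CC<E_{t},M^{2}_{t}>}=R^{2}_{\sigma(t)}$, whence $\JJ_{\M_{1}}\subset\JJ_{\M_{2}}$. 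The identical arguments on the $\B$-side handle the pair $\KK\mapsto\KK\cdot\E$ and $\M\mapsto\KKM$. Composing the two order isomorphisms then yields the order isomorphism carrying an ideal $\KK$ of $\B$ to the ideal $\JJ_{\KK\cdot\E}$ of $\CC$, completing the three-way correspondence.

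I expect no serious obstacle here, precisely because the substantive analytic work has already been absorbed into the preceding lemmas: the invariance packaged in Lemma~\ref{lem-ideals-inv} (that $R_{t}$ depends only on $\sigma(t)$) and the openness of $\JJM\to K$ established in Lemma~\ref{lem-j-bb} are the steps that require real effort, and at the level of the theorem they are merely invoked. The only points that call for any care are bookkeeping ones: making sure that the class of \emph{full} Banach $\B\sme\CC$-submodules is the correct intermediate object, so that the forward maps indeed land among full submodules (the fullness clauses of Proposition~\ref{prop-e-j}) while the inverse constructions indeed land among ideals (Proposition~\ref{prop-j-ideal} and its symmetric analog), and confirming that the two fibre descriptions of $\E\cdot\JJ$ coincide, which is already handled inside Proposition~\ref{prop-e-j}.
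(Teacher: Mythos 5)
Your proposal is correct and follows essentially the same route as the paper: the paper gives no separate proof of this theorem, presenting it as a summary of Proposition~\ref{prop-e-j}, Lemma~\ref{lem-j-bb}, Proposition~\ref{prop-j-ideal}, and Proposition~\ref{prop-bijection}, together with the remark that symmetry yields the corresponding lattice isomorphism $\KK\mapsto\KK\cdot\E$ on the $\B$-side. Your explicit check that the bijections and their inverses preserve inclusions is a small detail the paper leaves implicit in Proposition~\ref{prop-bijection}, but it is consistent with, not a departure from, the paper's argument.
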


The following is a generalization of
\cite{rw:morita}*{Proposition~3.24}.

\begin{cor}
  \label{cor-rie-corr} Suppose that $\qe\colon \E\to T$ is a Fell
  bundle equivalence between $\pb\colon \B\to H$ and
  $\pc\colon \CC\to K$.  If $\JJ$ is an ideal in $\CC$, then the
  corresponding ideal $\KK$ in $\B$ is
  $\coprod_{h\in H} H_{r(h)}\cdot B_{h}$ where
  \begin{equation}
    \label{eq:4new}
    H_{\rho(t)}=\overline{\lip\B<E_{t}\cdot J_{\sigma(t)},E_{t}>}.
  \end{equation}
\end{cor}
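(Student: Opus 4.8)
The plan is to recognize that this corollary is nothing more than the composition of the two halves of Theorem~\ref{thm-rieffel-corr-fell}: first pass from the ideal $\JJ$ in $\CC$ to its associated full Banach $\B\sme\CC$-submodule of $\E$, and then pass from that submodule to the corresponding ideal in $\B$. So there is really no new content to establish; the work is in matching up the fibre formulas.

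First I would set $\M:=\E\cdot\JJ$. By Proposition~\ref{prop-e-j}, $\M$ is a full Banach $\B\sme\CC$-submodule of $\E$ with fibres $M_{t}=E_{t}\cdot J_{\sigma(t)}$, and by Proposition~\ref{prop-bijection} it is precisely the submodule corresponding to $\JJ$ under the lattice isomorphism. By the symmetry remark preceding Theorem~\ref{thm-rieffel-corr-fell}, the ideal $\KK$ in $\B$ that corresponds to $\JJ$ is exactly the ideal $\KKM$ that corresponds to $\M$ under the $\B$-side isomorphism $\KK\mapsto\KK\cdot\E$. Applying Theorem~\ref{thm-rieffel-corr-fell}\partref{2} to $\M$ then gives $\KK=\KKM=\bigcup_{h\in H}L_{r(h)}\cdot B_{h}$, where for any $t$ with $\rho(t)=r(h)$ we have $L_{\rho(t)}=\overline{\lip\B<M_{t},E_{t}>}$, this ideal depending only on $\rho(t)$ by Lemma~\ref{lem-ideals-inv} so that the notation $L_{r(h)}$ is unambiguous.

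Next I would substitute the fibre formula $M_{t}=E_{t}\cdot J_{\sigma(t)}$ from the first step into the definition of $L_{\rho(t)}$, obtaining
\[
  L_{\rho(t)}=\overline{\lip\B<E_{t}\cdot J_{\sigma(t)},E_{t}>}=H_{\rho(t)},
\]
which is exactly the ideal named in the statement. Hence $\KK=\bigcup_{h\in H}H_{r(h)}\cdot B_{h}$, and since the fibres $B_{h}$ over distinct $h$ are disjoint subsets of $\B$, this union is the disjoint union $\coprod_{h\in H}H_{r(h)}\cdot B_{h}$, as claimed. The only points requiring care are the well-definedness of $H_{\rho(t)}$ as a function of $\rho(t)$ alone, which is supplied by Lemma~\ref{lem-ideals-inv}, and the correct identification of $M_{t}$ via Proposition~\ref{prop-e-j}; with these in hand the result is immediate, so I do not anticipate any genuine obstacle.
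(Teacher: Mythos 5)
Your proposal is correct and follows essentially the same route as the paper: both proofs identify $\M=\E\cdot\JJ$ as the submodule corresponding to $\JJ$ via Theorem~\ref{thm-rieffel-corr-fell}\partref{1}, then read off $\KK$ from Theorem~\ref{thm-rieffel-corr-fell}\partref{2} and substitute the fibre formula $M_{t}=E_{t}\cdot J_{\sigma(t)}$ to recognize $L_{\rho(t)}$ as $H_{\rho(t)}$. Your version is simply more explicit about the supporting citations (Propositions~\ref{prop-e-j} and~\ref{prop-bijection}, Lemma~\ref{lem-ideals-inv}) that the paper leaves implicit inside the theorem.
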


\begin{proof}
  If $\JJ$ is an ideal in $\CC$, then according to
  Theorem~\ref{thm-rieffel-corr-fell}(a), the corresponding full
  Banach $\B\sme \CC$-submodule is $\M=\E\cdot \JJ$.  Then using
  Theorem~\ref{thm-rieffel-corr-fell}(b), the corresponding ideal
  $\KK$ in $\B$ is $\bigcup_{h\in H} L_{r(h)}\cdot B_{h}$ where
  $L_{r(h)}$ is given by the right-hand side of \eqref{eq:4new} for
  any $t\in T$ such that $\rho(t)=r(h)$.  This gives the result.
\end{proof}

\section{Extending the Rieffel Correspondence}
\label{sec:extend-rieff-corr}

Now we want to state and prove the analogues for Fell bundles of parts (c)~and (d) of
Theorem~\ref{thm-rief-corr}.

\begin{prop}
  \label{prop-me} Suppose that $\qe\colon \E\to T$ is a Fell-bundle
  equivalence between $\pb\colon \B\to H$ and $\pc\colon \CC\to K$.  Suppose that
  $\JJ$ is an ideal in $\CC$ and that $\M$ and $\KK$ are the
  corresponding full Banach $\B\sme \CC$-submodule in $\E$ and ideal in $\B$.
  Then $\M$ is a Fell-bundle  equivalence between $\KK$ and $\JJ$.
\end{prop}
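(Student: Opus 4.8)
The plan is to give $\M=\E\cdot\JJ$ the actions and inner products inherited from $\E$ and then verify the axioms (E1)--(E3) of \defnref{def-equi} directly, keeping the same $(H,K)$-equivalence $T$, the moment maps $\rho$ and $\sigma$, and the maps $\tau_{H}$, $\tau_{K}$ as the underlying equivalence data. By Proposition~\ref{prop-e-j} we already know that $\M$ is a full Banach $\B\sme\CC$-submodule of $\E$, hence a Banach subbundle $\qe\restr\M\colon\M\to T$ with fibres $M_{t}=E_{t}\cdot J_{\sigma(t)}$; so the topological structure of the bundle $\M\to T$ needs no further work, and the task reduces to checking the algebraic and fibrewise conditions.

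First I would take the left $\KK$-action and right $\JJ$-action on $\M$ to be the restrictions of the given $\B$- and $\CC$-actions. These land in $\M$: since $\KK\subset\B$ and $\M$ is a $\B$-submodule, $K_{h}\cdot M_{t}\subset B_{h}\cdot M_{t}\subset M_{h\cdot t}$, and dually $M_{t}\cdot J_{k}\subset M_{t\cdot k}$, with continuity inherited from $\E$. The inner products $\lip\KK<\cdot,\cdot>$ and $\rip\JJ<\cdot,\cdot>$ are likewise the restrictions of $\lip\B<\cdot,\cdot>$ and $\rip\CC<\cdot,\cdot>$. The one substantive point is that these take values in the \emph{ideals} $\KK$ and $\JJ$ rather than merely in $\B$ and $\CC$. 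For the right inner product this is exactly \eqref{eq:21}, which gives $\rip\CC<\M,\E>\subset\JJM=\JJ$, the last equality by Proposition~\ref{prop-bijection}; the left inner product is handled by the symmetric, left-handed analogue, $\lip\B<\M,\E>\subset\KKM=\KK$. In particular $\rip\CC<\M,\M>\subset\JJ$ and $\lip\B<\M,\M>\subset\KK$, and continuity of these maps into the subbundles $\JJ$ and $\KK$ (with their relative topologies) is then automatic.

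The algebraic axioms come cheaply. Conditions (E1) and (E2)(ii)--(iv) are identities valid for arbitrary elements of $\E$, so they hold a fortiori for elements of $\M$; condition (E2)(i) is immediate because $\qe$, $\pb$, $\pc$, $\tau_{H}$, and $\tau_{K}$ are unchanged. The one axiom with genuine content is (E3). For each $t$ the fibre $M_{t}$ is a closed $B_{\rho(t)}\sme C_{\sigma(t)}$-submodule of the \ib\ $E_{t}$, so by the Rieffel correspondence (Theorem~\ref{thm-rief-corr}(c)) it is an $L_{\rho(t)}\sme R_{\sigma(t)}$-\ib\ under the restricted actions and inner products. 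It remains only to identify these coefficient algebras with the relevant fibres: $R_{\sigma(t)}=J_{\sigma(t)}$ by \eqref{eq:23} together with $\JJM=\JJ$, and $L_{\rho(t)}=K_{\rho(t)}$, the fibre of $\KK=\KKM$ over the unit $\rho(t)$, by the description of $\KKM$ in Theorem~\ref{thm-rieffel-corr-fell}(b). Thus each $M_{t}$ is a $K_{\rho(t)}\sme J_{\sigma(t)}$-\ib, which is (E3).

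Finally, the bundle-level fullness $\overline{K_{h}\cdot M_{t}}=M_{h\cdot t}$ and $\overline{M_{t}\cdot J_{k}}=M_{t\cdot k}$ need not be checked separately: once (E1)--(E3) hold for $\M$, it is a consequence of the equivalence axioms, obtained by applying Lemma~\ref{lem-6.2} to $\M$ regarded as a $\KK\sme\JJ$-equivalence. I expect the main obstacle to be the value-containment step of the second paragraph, namely confirming $\rip\CC<\M,\M>\subset\JJ$ and $\lip\B<\M,\M>\subset\KK$; but this was essentially already carried out in proving \eqref{eq:21} and Proposition~\ref{prop-j-ideal}, so in the end the argument is an assembly of the preceding results rather than a new computation.
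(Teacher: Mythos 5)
Your proposal is correct and takes essentially the same route as the paper: equip $\M$ with the restricted actions and inner products, observe that the only substantive points are that $\lip\B<\cdot,\cdot>$ and $\rip\CC<\cdot,\cdot>$ restrict to $\KK$- and $\JJ$-valued pairings on $\M$, and then get (E3) from part (c) of Theorem~\ref{thm-rief-corr}. The only (cosmetic) difference is in the value-containment step: the paper computes directly, using $M_{t}=K_{\rho(t)}\cdot E_{t}$, that $\lip\B<K_{\rho(t)}\cdot E_{t},M_{h^{-1}\cdot t}>\subset K_{\rho(t)}\cdot B_{h}=K_{h}$, whereas you cite \eqref{eq:21} together with Proposition~\ref{prop-bijection}; both rest on the same Section~3 machinery, so this is a variant of the same argument rather than a different one.
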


\begin{proof}
  Since $\M$ is a $\B\sme\CC$-submodule of $\E$, we clearly have a left
  $\KK$-action and a right $\JJ$-action satisfying
  (E1).

  For (E2), we claim that it suffices to
  let $\lip\KK<e,f>=\lip\B<e,f>$ and $\rip\JJ<e,f>=\rip\CC<e,f>$.  To
  see this, note that if $(e,f)\in \M*_{\sigma}\M$, then we can assume
  $(e,f) \in M_{t}\times M_{h^{-1}\cdot t}$ for some $h\in H$ and $t\in T$.   But
  $M_{t}=K_{\rho(t)}\cdot E_{t}$.  However, $\lip\B<K_{\rho(t)}\cdot
  E_{t},M_{h^{-1}\cdot t}>=K_{\rho(t)}\cdot
  \lip\B<E_{t},M_{h^{-1}\cdot t}>\subset K_{\rho(t)}\cdot
  B_{h}=K_{h}$.   Therefore $\lip\KK<\cdot,\cdot>$ is $\KK$-valued.
  Similarly, $\rip\JJ<\cdot,\cdot>$ is $\JJ$-valued.  The rest of
  (E2) follows from the given
  properties of $\lip\B<\cdot,\cdot>$ and $\rip\CC<\cdot,\cdot>$.

  For (E3), the fact that $M_{t}$ is a
  $K_{\rho(t)}\sme J_{\sigma(t)}$-\ib\ follows from the Rieffel Correspondence
  (part~(c) of Theorem~\ref{thm-rief-corr}).
\end{proof}

\begin{prop}
  \label{prop-equiv-quotient} Let $\qe\colon \E\to T$ be an equivalence
  between $\pb\colon \B\to H$ and $\pc\colon \CC\to K$.  Suppose that
  $\JJ$ is an 
  ideal in $\CC$ and that $\M$ and $\KK$ are the corresponding full
  Banach $\B\sme\CC$-submodule in $\E$ and ideal in $\B$,
  respectively.  Then the quotient Banach bundle $\E/\M$ is an
  equivalence between $\B/\KK$ and $\CC/\JJ$.
\end{prop}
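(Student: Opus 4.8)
The plan is to transport the equivalence structure on $\E$ down to the quotients through the various quotient maps. First I would record the underlying data: by Proposition~\ref{prop-ban-bund-quotient} the quotient $\E/\M$ is a Banach bundle over the \emph{same} base $T$, which is still an $(H,K)$-equivalence with the same open moment maps $\rho$ and $\sigma$; moreover $\B/\KK$ and $\CC/\JJ$ are the quotient Fell bundles, with fibres $B_h/K_h$ and $C_k/J_k$ and with $(\E/\M)_t=E_t/M_t$. Writing $q^{\KK}$, $q^{\JJ}$, $q^{\M}$ for the continuous, open, norm-reducing quotient maps supplied by Proposition~\ref{prop-ban-bund-quotient}, I would define the actions and inner products on $\E/\M$ by pushing those on $\E$ through these maps, e.g.\ $q^{\KK}(b)\cdot q^{\M}(e):=q^{\M}(b\cdot e)$ and $\rip{\CC/\JJ}<q^{\M}(e),q^{\M}(f)>:=q^{\JJ}\bigl(\rip\CC<e,f>\bigr)$, with the symmetric definitions for the right action and for $\lip{\B/\KK}<\cdot,\cdot>$.

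The next step is well-definedness, which is where the ideal/submodule bookkeeping enters. For the actions this requires $\B\cdot\M\subset\M$ and $\M\cdot\CC\subset\M$ (the submodule property of $\M$) together with $\KK\cdot\E=\M=\E\cdot\JJ$ (Propositions~\ref{prop-e-j} and~\ref{prop-bijection}), so that altering representatives by $\KK$, $\M$, or $\JJ$ does not change the class of the product. For the inner products it requires $\rip\CC<\M,\E>\subset\JJ$, which is exactly \eqref{eq:21} together with Lemma~\ref{lem-tech-closure}, and the symmetric statement $\lip\B<\M,\E>\subset\KK$; since $\KK$ and $\JJ$ are $*$-closed ideals, the adjoint identities of (E2)(ii) then give $\lip\B<\E,\M>\subset\KK$ and $\rip\CC<\E,\M>\subset\JJ$ as well. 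Granting well-definedness, axiom (E1) and the algebraic parts (E2)(i)--(iv) descend formally: each identity for $\E$ becomes the corresponding identity for $\E/\M$ upon applying the multiplicative, $*$-preserving, module quotient maps, using $\overline{p}\circ q^{\KK}=\pb$ and $\overline{p}\circ q^{\JJ}=\pc$ on the base for (E2)(i). For (E3) I would fix $t$ and apply the Rieffel Correspondence to the submodule $M_t$ of the $B_{\rho(t)}\sme C_{\sigma(t)}$-\ib\ $E_t$: part~(d) of Theorem~\ref{thm-rief-corr} gives that $E_t/M_t$ is a $B_{\rho(t)}/K_{\rho(t)}\sme C_{\sigma(t)}/J_{\sigma(t)}$-\ib, and these algebras are precisely the fibres $(\B/\KK)_{\rho(t)}$ and $(\CC/\JJ)_{\sigma(t)}$.

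The hard part is continuity of the four induced operations, since everything else is either formal or fibrewise. Here I would argue exactly as in Proposition~\ref{prop-ban-bund-quotient}, exploiting that the quotient maps are norm-reducing and that the topology on each quotient bundle is governed by images of compactly supported sections (Theorem~\ref{thm-section-top}). To show the left action is continuous, take a net $(\beta_i,\eta_i)\to(\beta,\eta)$ in $(\B/\KK)*(\E/\M)$ with $\beta_i\in(\B/\KK)_{h_i}$, $\eta_i\in(\E/\M)_{t_i}$, so that $h_i\to h$ in $H$ and $t_i\to t$ in $T$ and hence $h_i\cdot t_i\to h\cdot t$ in $T$. Choose $\phi\in\Gamma_{c}(H;\B)$ and $f\in\Gamma_{c}(T;\E)$ with $q^{\KK}(\phi)(h)=\beta$ and $q^{\M}(f)(t)=\eta$, so that $\|\beta_i-q^{\KK}(\phi)(h_i)\|\to 0$ and $\|\eta_i-q^{\M}(f)(t_i)\|\to 0$. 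Using bilinearity and the bound $\|b\cdot e\|\le\|b\|\|e\|$ from the action axioms, together with the boundedness of $\|\eta_i\|$ and of $\|q^{\KK}(\phi)(h_i)\|$ along the net,
\[
\bigl\|\beta_i\cdot\eta_i-q^{\KK}(\phi)(h_i)\cdot q^{\M}(f)(t_i)\bigr\|\le \|\beta_i-q^{\KK}(\phi)(h_i)\|\,\|\eta_i\|+\|q^{\KK}(\phi)(h_i)\|\,\|\eta_i-q^{\M}(f)(t_i)\|\longrightarrow 0.
\]
Since $q^{\KK}(\phi)(h_i)\cdot q^{\M}(f)(t_i)=q^{\M}\bigl(\phi(h_i)\cdot f(t_i)\bigr)$ is the value along the net of a continuous $\E/\M$-valued function built from $\phi$, $f$, and the action $H\curvearrowright T$, the convergence criterion \cite{muhwil:dm08}*{Lemma~A.3} yields $\beta_i\cdot\eta_i\to\beta\cdot\eta$. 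The right action is identical, and the two inner products are handled the same way, now using continuity of $\lip\B<\cdot,\cdot>$ and $\rip\CC<\cdot,\cdot>$ on $\E$ and the norm-reducing property of $q^{\KK}$ and $q^{\JJ}$. The only genuine subtlety to watch is organizing the base-point convergence (in $H$, $T$, and $K$) of the relevant products, which follows from continuity of the groupoid operations and of $\rho$, $\sigma$; everything else reduces to the norm estimates above.
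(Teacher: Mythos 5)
Your proposal is correct and shares the paper's overall architecture---push the actions and inner products of $\E$ through the quotient maps, check that (E1)--(E2) descend formally, and obtain (E3) fibrewise---and in fact you are more explicit than the paper on two points it only gestures at: the well-definedness bookkeeping (via $\KK\cdot\E=\M=\E\cdot\JJ$, $\rip\CC<\M,\E>\subset\JJ$, and $\lip\B<\M,\E>\subset\KK$) and the identification in (E3) via part~(d) of Theorem~\ref{thm-rief-corr}. The genuine divergence is in the continuity step, which occupies most of the paper's proof. The paper exploits openness of the quotient maps (Proposition~\ref{prop-ban-bund-quotient}) to lift convergent nets along subnets: every subnet of $\bigl(\qk(b_i),q(e_i)\bigr)$ has a further subnet admitting representatives $b_i'\to b$, $e_i'\to e$, whence $q(b_i\cdot e_i)=q(b_i'\cdot e_i')\to q(b\cdot e)$ by continuity of the action on $\E$ and of $q$. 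You instead compare against sections through the limit point and estimate norms; this works and is more quantitative, but needs two patches. First, your displayed estimate invokes the bound $\|\beta\cdot\eta\|\le\|\beta\|\,\|\eta\|$ for the \emph{quotient} action, which is itself one of the axioms being verified for that action; it must first be derived by the infimum argument (this is exactly \eqref{eq:41} in the paper), after which your estimate is legitimate. Second, your comparison net $q^{\M}\bigl(\phi(h_i)\cdot f(t_i)\bigr)$ is a function of points of $H*T$, not of the form $g(h_i\cdot t_i)$ for a fixed continuous section $g$ of $\E/\M$ over $T$, so the section form of \cite{muhwil:dm08}*{Lemma~A.3} does not apply verbatim; what you actually need is the net version---if $v_i\to v_0$ in the bundle, $u_i$ lies in the same fibre as $v_i$, and $\|u_i-v_i\|\to 0$, then $u_i\to v_0$---which follows directly from axioms (B2) and (B4), exactly as in the proof of Proposition~\ref{prop-fd-sub-bun}. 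With these repairs your route is complete; the trade-off is that the paper's subnet argument avoids all norm bookkeeping, while yours avoids the subnet gymnastics and makes the mechanism of convergence explicit.
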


\begin{proof}
  We let $\qk\colon \B\to \B/\KK$ and $\qj\colon \CC\to \CC/\JJ$ be the quotient
  maps.  Then 
  the given left and right actions of $\B$ and $\CC$ on $\E$ induce
  left and right actions of $\B/\KK$ and $\CC/\JJ$ on $\E/\M$ in the
  expected way:
  \begin{equation}
    \label{eq:40}
    \qk(b)\cdot q(e)=q(b\cdot e)\quad\text{and} \quad q(e)\cdot
    \qj(c)=q(e\cdot c)
  \end{equation}
  assuming that $b\cdot e$ and $e\cdot c$ are defined.

  To see that these actions are continuous, we use the fact that $q$,
  $\qk$, and $\qj$ are open as well as continuous
  (Proposition~\ref{prop-ban-bund-quotient}).   Suppose that
  $q(e_{i})\to q(e)$ while $\qk(b_{i})\to\qk(b)$ with $b_{i}\cdot
  e_{i}$ defined for all $i$.   We need to verify that $q(b_{i}\cdot
  e_{i})\to q(b\cdot e)$.   For this, it suffices to see that every
  subnet has a subnet converging to $q(b\cdot e)$.  But after passing
  to a subnet and relabeling, the openness of the quotient maps means
  we can pass to another subnet and assume that $e_{i}'\to e$ and
  $b_{i}'\to b$ with $q(e_{i}')=q(e)$ and $\qk(b_{i}')=\qk(b_{i})$.
  Then the continuity of the quotient maps implies that $q(b_{i}\cdot
  e_{i}) = q(b_{i}'\cdot e_{i}')\to q(b\cdot e)$ as required.

 We also have
 \begin{align}
   \label{eq:41}
   \|q(b\cdot e)\|
   &\le \inf\set{\|b'\cdot e'\|:\text{$\qk(b')=\qk(b)$ and
     $q(e')=q(e)$}} \\
   &\le \inf\set{\|b'\|\|e'\|:\text{$\qk(b')=\qk(b)$ and
     $q(e')=q(e)$}} \\
   &= \|\qk(b)\|\|q(e)\|.
 \end{align}

 Therefore $\B/\KK$ acts on the left of $\E/\M$.  
 The argument for the right action is similar.

 Now we need to verify the axioms in Definition~\ref{def-equi}.
 (E1) is immediate since
 $\E$ is an equivalence.  For Axiom~(E2), we define
 \begin{equation}
   \label{eq:42}
   \rip\CC/\JJ<q(e),q(f)>:= \qj\bigl(\rip\CC<e,f>\bigr) \quad
   \text{and}
   \lip\B/\KK<q(e),q(f)>= \qk\bigl(\lip\B<e,f>\bigr).
 \end{equation}
 It is not hard to check that these pairings are
 well-defined.
 Then properties (i), (ii), (iii), and (iv) follow from the
 corresponding properties for $\E$ and the observation that the
 quotient maps are multiplicative.  The continuity follows using the
 continuity and openness of the quotient maps as we did above for the
 left and right actions.

 Of course, Axiom~(E3) is clear.
\end{proof}

\section{At the \cs-Level}
\label{sec:at-cs-level}

Since the previous exposition did not require it, we have purposely
avoided discussing the Fell-bundle \cs-algebras that are associated to
a Fell bundle.   However, there is an obvious question: how is our
Rieffel correspondence for ideals in equivalent Fell bundles related to the
standard Rieffel correspondence for ideals in Morita equivalent
\cs-algebras?   In order that there be \cs-algebras,
we now have to assume our groupoids
have Haar systems.   In order to apply the Equivalence Theorem---that
is, \cite{muhwil:dm08}*{Theorem~6.4}---we also need our Fell bundles 
to be separable.   

We return to the set-up
in Section~\ref{sec:rieff-corr-fell}: we let $\qe\colon\E\to T$ be an
equivalence between the separable Fell bundles $\pb\colon \B \to H$
and $\pc\colon\CC\to K$.  In particular, $T$ is a
$(H,K)$-equivalence\footnote{Although it is required, we note that $T$
  must be second countable since $H$ and $K$ are
  \cite{wil:toolkit}*{Proposition~2.53}.} and we let
$\rho\colon T\to \ho$ and $\sigma\colon T\to \ko$ be the open moment
maps.

Then the Equivalence
Theorem implies that $\cs(\ho;\B)$ and $\cs(\ko;\CC)$ are Morita
equivalent via an imprimitivity bimodule $\X$ which is the completion
of $\X_{0}:=\Gamma_{c}(T,\E)$ with the actions and inner products
given in \cite{muhwil:dm08}*{Theorem~6.4}.  Then we can let
\begin{equation}
  \label{eq:44}
  \xind:\I\bigl(\cs(\ko;\CC)\bigr)\to \I\bigl(\cs(\ho;\B)\bigr)
\end{equation}
be the
classical Rieffel lattice isomorphism.

If $\JJ$ is an ideal in $\CC$, then as shown in
\cite{ionwil:hjm11}*{Lemma~3.5}, the identity map $\iota$ induces an
isomorphism of $\cs(\ko;\JJ)$ onto the ideal $\Ex(\JJ)$ which is the
closure of $\iota(\Gamma_{c}(\ko;\CC))$ in $\cs(\ko;\CC)$.

Let $\JJ$ be an ideal in $\CC$ and $\KK$ the corresponding ideal in
$\B$ as in Theorem~\ref{thm-rieffel-corr-fell}.    The goal here is to
establish that the two Rieffel correspondences are compatible in that
\begin{equation}
  \label{eq:16}
  \xind\bigl(\Ex(\JJ)\bigr)=\Ex(\KK).
\end{equation}

By \cite{rw:morita}*{Proposition~3.24}, the left hand side of
\eqref{eq:16} is
\begin{equation}
  \label{eq:25}
  \ospan\set{\Lip <x\cdot b,y>:\text{$x,y\in \X$ and $b\in
      \Ex(\JJ)$}}
\end{equation}
where $\lip<\cdot,\cdot>$ is the $\Gamma_{c}(\ho;\B)$-valued inner
product from Equation~(6.3) of \cite{muhwil:dm08}*{Theorem~6.4}. In
particular, if $x,y\in \X_{0}$ and $b\in \Gamma_{c}(\ko;\JJ)$, then
provided $\rho(t)=s(h)$,
\begin{align}
  \label{eq:30}
  \Lip<x\cdot b,y>(h)
  &= \int_{K}\lip\B<x\cdot b (h \cdot t\cdot k),y(t\cdot
    k)>\,d\lambda_{K}^{\sigma(t)} (k)
\end{align}
where according to Equation (6.2) of \cite{muhwil:dm08}*{Theorem~6.4}
we have
\begin{align}
  \label{eq:31}
  x\cdot b(h\cdot t \cdot k)=\int_{K}x(h\cdot t \cdot k l)b(l^{-1})
  \,d\lambda_{K}^{s(k)} (l).
\end{align}

Let $\M=\E\cdot \JJ=\KK\cdot \E$.  Note that $\M$ is a $\KK\sme \JJ$-equivalence.
Then the integrand in \eqref{eq:31} is in the Banach space
$M_{h\cdot t\cdot k}$ for all $l$.  Hence
\begin{equation}
  \label{eq:33}
  x\cdot b(h\cdot t \cdot k)\in M_{h\cdot t\cdot k}= K_{r(h)} \cdot
  E_{h\cdot t\cdot k}. 
\end{equation}
Plugging into \eqref{eq:30}, and using (E2)(iii) of
Definition~\ref{def-equi}, we clearly have
\begin{equation}
  \label{eq:36}
  \Lip<x\cdot b,y>(h)\in K_{r(h)}\cdot B_{h}=K_{h}.
\end{equation}
It follows that
\begin{equation}
  \label{eq:38}
  \xind\bigl(\Ex(\JJ)\bigr) \subset \Ex(\KK).
\end{equation}

But we can also work with $\xind^{-1}$.   Then
\begin{equation}
  \label{eq:39}
  \xind^{-1}(\Ex(\KK))=\ospan\set{\Rip<x,c\cdot y>:\text{$x,y\in \X$
      and $c\in\Ex(\KK)$.}}
\end{equation}
Then a similar argument to the above shows that
\begin{equation}
  \label{eq:43}
  \xind^{-1}(\Ex(\KK))\subset \Ex(\JJ).
\end{equation}
Now \eqref{eq:16} follows by applying $\xind$ to both sides of
\eqref{eq:43}.


\def\noopsort#1{}\def\cprime{$'$} \def\sp{^}
\begin{bibdiv}
\begin{biblist}

\bib{dg:banach}{book}{
      author={Dupr{\'e}, Maurice~J.},
      author={Gillette, Richard~M.},
       title={Banach bundles, {B}anach modules and automorphisms of
  ${C}^*$-algebras},
   publisher={Pitman (Advanced Publishing Program)},
     address={Boston, MA},
        date={1983},
      volume={92},
        ISBN={0-273-08626-X},
      review={\MR{85j:46127}},
}

\bib{dwz:jmaa22}{article}{
      author={Duwenig, Anna},
      author={Williams, Dana~P.},
      author={Zimmerman, Joel},
       title={Renault's $j$-map for {F}ell bundle \cs-algebras},
        date={2022},
     journal={J. Math. Anal. Appl.},
      volume={516},
       pages={in press},
}

\bib{fd:representations1}{book}{
      author={Fell, James M.~G.},
      author={Doran, Robert~S.},
       title={Representations of {$*$}-algebras, locally compact groups, and
  {B}anach {$*$}-algebraic bundles. {V}ol. 1},
      series={Pure and Applied Mathematics},
   publisher={Academic Press Inc.},
     address={Boston, MA},
        date={1988},
      volume={125},
        ISBN={0-12-252721-6},
        note={Basic representation theory of groups and algebras},
      review={\MR{90c:46001}},
}

\bib{gie:lnm82}{book}{
      author={Gierz, Gerhard},
       title={Bundles of topological vector spaces and their duality},
      series={Queen's Papers in Pure and Applied Mathematics},
   publisher={Springer-Verlag, Berlin-New York},
        date={1982},
      volume={57},
        ISBN={3-540-11610-9},
        note={With an appendix by the author and Klaus Keimel},
      review={\MR{674650}},
}

\bib{ionwil:hjm11}{article}{
      author={Ionescu, Marius},
      author={Williams, Dana~P.},
       title={Remarks on the ideal structure of {F}ell bundle
  {$C^*$}-algebras},
        date={2012},
     journal={Houston J. Math.},
      volume={38},
       pages={1241\ndash 1260},
}

\bib{laz:jmaa18}{article}{
      author={Lazar, Aldo~J.},
       title={A selection theorem for {B}anach bundles and applications},
        date={2018},
        ISSN={0022-247X},
     journal={J. Math. Anal. Appl.},
      volume={462},
      number={1},
       pages={448\ndash 470},
         url={https://doi.org/10.1016/j.jmaa.2018.02.008},
      review={\MR{3771256}},
}

\bib{muhwil:dm08}{article}{
      author={Muhly, Paul~S.},
      author={Williams, Dana~P.},
       title={Equivalence and disintegration theorems for {F}ell bundles and
  their {$C\sp *$}-algebras},
        date={2008},
        ISSN={0012-3862},
     journal={Dissertationes Math. (Rozprawy Mat.)},
      volume={456},
       pages={1\ndash 57},
      review={\MR{MR2446021}},
}

\bib{rw:morita}{book}{
      author={Raeburn, Iain},
      author={Williams, Dana~P.},
       title={Morita equivalence and continuous-trace {$C^*$}-algebras},
      series={Mathematical Surveys and Monographs},
   publisher={American Mathematical Society},
     address={Providence, RI},
        date={1998},
      volume={60},
        ISBN={0-8218-0860-5},
      review={\MR{2000c:46108}},
}

\bib{wil:crossed}{book}{
      author={Williams, Dana~P.},
       title={Crossed products of {$C{\sp \ast}$}-algebras},
      series={Mathematical Surveys and Monographs},
   publisher={American Mathematical Society},
     address={Providence, RI},
        date={2007},
      volume={134},
        ISBN={978-0-8218-4242-3; 0-8218-4242-0},
      review={\MR{MR2288954 (2007m:46003)}},
}

\bib{wil:toolkit}{book}{
      author={Williams, Dana~P.},
       title={A tool kit for groupoid {$C^*$}-algebras},
      series={Mathematical Surveys and Monographs},
   publisher={American Mathematical Society, Providence, RI},
        date={2019},
      volume={241},
        ISBN={978-1-4704-5133-2},
      review={\MR{3969970}},
}

\bib{yam:xx87}{unpublished}{
      author={Yamagami, Shigeru},
       title={On the ideal structure of {$C^*$}-algebras over locally compact
  groupoids},
        date={1987},
        note={(Unpublished manuscript)},
}

\end{biblist}
\end{bibdiv}

\end{document}